\documentclass[11pt]{article}
\usepackage{amsfonts}
\usepackage{color}
\usepackage{amsmath,amssymb,comment}
\usepackage[hidelinks]{hyperref}

\newtheorem{theo}{Theorem}[section]
\newtheorem{lem}[theo]{Lemma}
\newtheorem{cor}[theo]{Corollary}
\newtheorem{prop}[theo]{Proposition}

\newtheorem{defi}[theo]{Definition}

\newcommand{\mysection}[1]{\section{#1} \setcounter{equation}{0}}
\newcommand{\proof}{{\sc Proof.} \quad}
\newcommand{\proofc}{{\sc Proof} \ }
\newcommand{\be}{\begin{equation} \label}
\newcommand{\ee}{\end{equation}}
\newcommand{\bea}{\begin{eqnarray}\label}
\newcommand{\eea}{\end{eqnarray}}
\newcommand{\bas}{\begin{eqnarray*}}
\newcommand{\eas}{\end{eqnarray*}}
\newcommand{\bit}{\begin{itemize}}
\newcommand{\eit}{\end{itemize}}
\newcommand{\qed}{\hfill$\Box$ \vskip.2cm}
\newcommand{\nn}{\nonumber}
\newcommand{\R}{\mathbb{R}}
\newcommand{\N}{\mathbb{N}}
\newcommand{\pO}{\partial\Omega}

\newcommand{\dist}{{\rm dist} \, }
\newcommand{\supp}{{\rm supp} \, }

\newcommand{\hra}{\hookrightarrow}
\newcommand{\io}{\int_\Omega}

\newcommand{\na}{\nabla}
\newcommand{\Del}{\Delta}
\newcommand{\del}{\delta}

\newcommand{\lam}{\lambda}
\newcommand{\Lam}{\Lambda}

\newcommand{\pa}{\partial}
\newcommand{\bom}{\overline{\Omega}}
\newcommand{\Om}{\Omega}

\newcommand{\ov}{\overline}

\newcommand{\hs}{\hspace*}

\newcommand{\vp}{\varphi}
\newcommand{\lbal}{\left\{ \begin{array}{l}}
\newcommand{\lball}{\left\{ \begin{array}{ll}}
\newcommand{\ear}{\end{array} \right.}

\newcommand{\abs}{\\[5pt]}

\newcommand{\adb}{\allowdisplaybreaks}

\newcommand{\Fp}{{\mathcal{F}}_{\vp}}
\newcommand{\Ep}{{\mathcal{E}}_{\vp}}
\newcommand{\B}{{\mathcal{B}}}

\begin{document}
\adb
%\enlargethispage{10mm}
%
%
\title{Refined temporal asymptotics near blow-up points\\
in the planar Keller-Segel system}
\author{
Frederic Heihoff\footnote{fheihoff@math.uni-paderborn.de}\\
{\small Universit\"at Paderborn, Institut f\"ur Mathematik}\\
{\small 33098 Paderborn, Germany}
\and
Michael Winkler\footnote{michael.winkler@math.uni-paderborn.de}\\
{\small Universit\"at Paderborn, Institut f\"ur Mathematik}\\
{\small 33098 Paderborn, Germany}}
\date{}
\maketitle
\begin{abstract}
\noindent 
For the Keller-Segel system
\[
  \left\{\,
  \begin{aligned}
    u_t &= \Delta u - \nabla \cdot ( u \nabla v ), \\
    v_t &= \Delta v - v + u
  \end{aligned}
  \right.
  \tag{$\star$}
\]
posed in a planar domain $\Omega$ with Neumann boundary conditions, the existence of classical solutions blowing up at some finite time $T$ has long been established. In fact, it has been shown that for every blow-up point $x$ the quantity $\int_{B_R(x)\cap\Omega} u(\cdot,t )\ln(u(\cdot, t))$ is unbounded as $t\nearrow T$ for all $R > 0$ even though the global mass of $u$ is always conserved.\abs
The present manuscript provides some quantitative information on the behavior of such localized $L\log L$ expressions
by asserting the existence of $\delta_0=\delta_0(\Omega)>0$ such that any solution to the Neumann problem for ($\star$)
blowing up at time $T\in (0,\infty)$ satisfies
\[
  \limsup_{t\nearrow T} \frac{1}{\ln\frac{T}{T-t}}\int_{B_R(x)\cap\Omega} u(\cdot, t)\ln(u(\cdot, t)) \ge \delta_0
  \tag{$\star\star$}
\]
for all $R > 0$ at each blow-up point $x$.
This confirms a certain universality property of the blow-up mechanism seen in the particular examples of radial
collapsing solutions constructed in the seminal work \cite{HV}, 
especially also beyond the realm of symmetry;
apart from that, along with a consequence of ($\star\star$) on the corresponding asymptotics of similarly 
localized $L^p$ norms of $u$ for $p\in (1,\infty]$, this
provides some extension of a known result on non-degeneracy of blow-up points that has concentrated 
on the choice $p=\infty$ here.\abs
\noindent {\bf Key words:} chemotaxis; blow-up; asymptotics; localized energy\\
 {\bf MSC 2020:} 92C17 (primary); 35B40, 35B44, 35K10 (secondary)
% 92C17 -	Cell movement (chemotaxis, etc.)
% 35B40 - Asymptotic behavior of solutions to PDEs
% 35B44 - Blow-up in context of PDEs
% 35K10 - Second-order parabolic equations

\end{abstract}
\newpage
\section{Introduction}\label{intro}
Understanding blow-up phenomena forms a central theme in the analysis of chemotaxis systems. 
Subsequent to the introduction of the classical Keller-Segel model in 1970 (\cite{KS}),
predictions concerning the occurrence and the quality of singularity formation (\cite{nanjundiah}, \cite{horstmann})
have given rise to mathematical challenges that have significantly stimulated the analysis of parabolic systems
and their steady states during the past decades. 
In line with the 		%intricate structure
considerable intricacy of taxis-type cross-diffusion that shows up as a determinant model ingredient,
already the mere detection of blow-up seems to necessitate substantial efforts, and despite remarkably vivid activities
in the literature the picture in this regard seems yet far from complete.
After all,
%When posed in smoothly bounded and simply connected planar domains $\Om$, for instance,
the Neumann problem for the classical Keller-Segel system,
\be{0}
	\lball
	u_t = \Del u - \na \cdot (u\na v), 
	\qquad & x\in\Om, \ t\in (0,T), \\[1mm]
	v_t = \Del v - v + u,
	\qquad & x\in\Om, \ t\in (0,T), \\[1mm]
	\frac{\pa u}{\pa\nu}=\frac{\pa v}{\pa\nu}=0,
	\qquad & x\in\pO, \ t\in (0,T), \\[1mm]
	u(x,0)=u_0(x), \quad v(x,0)=v_0(x),
	\qquad & x\in\Om,
	\ear
\ee
is known to admit, when posed in smoothly bounded and simply connected planar domains $\Om$, 
some solutions blowing up with respect to the spatial $L^\infty$ norm of their first solution component 
either in finite or infinite time (\cite{horstmann_wang}); in radially symmetric settings, this time of explosion 
has been found to be actually finite for suitably chosen initial data (\cite{HV}).
While no such explosions occur in one-dimensional domains (\cite{osaki_yagi}), in three- or higher-dimensional settings
only some radial blow-up solutions seem to have been constructed so far (\cite{win_JMPA}).
Sufficient conditions for the absence of blow-up have been identified in \cite{NSY} and \cite{cao_small}, and 
extensions to Cauchy problems in $\R^n$ for $n\ge 2$ can be found in \cite{schweyer} and \cite{win_NON}.\abs
With regard to the qualitative behavior of unbounded solutions near the time of their explosion, however,
detailed descriptions of blow-up asymptotics could so far be achieved only for parabolic-elliptic simplifications of (\ref{0}).
In particular, finite-time blow-up in two-dimensional domains, as detected for sufficiently concentrated initial data
at large mass levels in \cite{nagai2001} and \cite{biler} (see also \cite{jaeger_luckhaus} and \cite{suzuki_book}), 
can only occur at finitely many points at which a Dirac mass is approached
(\cite{senba_suzuki_ADE2001}, 
\cite{senba_suzuki_ADE2003}, \cite{senba_NA2007}; 
see also \cite{delpino_arxiv}); 
in higher-dimensional relatives, integrable point sigularities may arise (\cite{souplet_win}).
Radial blow-up in associated Cauchy problems in $\R^n$, $n\ge 2$,
exclusively occurs at the center of symmetry (\cite{win_JEMS}), 
with a universal characterization of the corresponding non-selfsimilar asymptotics 
available when $n=2$ (\cite{mizo}),		%; cf.~also \cite{collot_CPAM2022} for nonradial stability), 
and with quite precise descriptions
gained for some examples of infinite-time bubbling exhibited by some mass-critical solutions when $n=2$
(\cite{DDelPDMW}, \cite{ghoul_CPAM2018}), and of 
selfsimilar or also of certain collapsing-ring solutions in the case $n=3$ (\cite{glogic}, \cite{collot_JFA2023}).
Spatial blow-up profiles of radial solutions to such quasistationary variants of (\ref{0})
allow for fairly precise pointwise estimates in frameworks of radially decreasing functions (\cite{souplet_win}, \cite{xueli_bai}),
with subtle deviations recently detected in the absence of radial monotonicity (\cite{zaag}).
For a small selection of findings on bubbling phenomena in fully stationary versions of (\ref{0}) and some close
relatives, we refer to \cite{agudelo_pistoia}, \cite{delpino_pistoia_vaira}, \cite{bonheure_JMPA}, \cite{ni_takagi_DMJ1993}
and \cite{delpino_JEMS2014}.\abs
{\bf Specifying the goal: Examining typicality of a particular blow-up mechanism.} \quad
%{\bf How far is the blow-up mechanism described by Herrero and V\'elazquez prototypical?} \quad
%
%
Markedly less comprehensive seems the information on qualitative behavior that is available for the fully parabolic system (\ref{0}).
Especially, it seems to date widely open to which extent the particular blow-up mechanism discovered in the seminal paper \cite{HV},
and later on discussed with respect to stability in \cite{schweyer},
can be regarded as a prototypical representative for singularity formation in the two-dimensional version of (\ref{0}).
In fact, the construction in \cite{HV} addresses the case when $\Om=B_{R_0}(0)\subset\R^2$ for some $R_0>0$, and yields $T>0$ and $A>0$
as well as a radially symmetric classical solution $(u,v)$ of (\ref{0}) which satisfies
\be{proto}
	u(x,t)=\frac{8}{\lam^2(t)} \cdot W\Big( \frac{x}{\lam(t)} \Big) \cdot \big( 1+o(1)\big),
\ee
uniformly in the set $\{(x,t)\in \Om\times (0,T) \ | \ |x|\le \lam(t)\}$ as $t \nearrow T$, where $W(\xi):=\frac{1}{(|\xi|^2+1)^2}$ for $\xi\in\R^2$ and
\be{lam}
	\lam(t)=A\sqrt{T-t} \cdot e^{-\sqrt{\frac{|\ln (T-t)|}{2}}} \cdot \big(1+o(1)\big) \qquad\text{ as }
	 t\nearrow T.
\ee
In particular, this solution has the property that the expression $I(t):=(T-t)\|u(\cdot,t)\|_{L^\infty(\Om)}$ is bounded from below
by a positive constant near $t=T$; in fact, we even have $I(t)\to +\infty$ in this case, meaning that $(u,v)$ 
undergoes a so-called type-II blow-up.
Although it seems unknown whether solutions other than the ones obtained in \cite{HV} and \cite{schweyer} may exhibit type-II explosions,
in \cite{mizoguchi_souplet} it could at least be asserted that any blow-up phenomenon in (\ref{0}) that occurs at time $T<\infty$ 
must be such that whenever $x_0\in\bom\subset\R^2$ is a blow-up point according to Definition \ref{db} below, we have
\bas
	\limsup_{t\nearrow T} \Big\{ (T-t) \|u(\cdot,t)\|_{L^\infty(B_R(x_0)\cap\Om)} \Big\} \ge \del
	\qquad \mbox{for all }  R>0
\eas
with some positive constant $\del$ depending on $\Om$ only.
While yielding a depiction which up to the subalgebraic correction in (\ref{lam}) is optimal for the solution in (\ref{proto})
at the point $x=0$ where the spatial maximum is attained, this result evidently does not characterize the behavior in any neighborhood
of this point, and hence appears unable to capture possibly more subtle facets of spatio-temporal dynamics.
Specifically, for each $p\in (1,\infty)$ the solution in (\ref{proto}) can be seen to actually satisfy
\bas
	\|u(\cdot,t)\|_{L^p(\{ |x|<\lam(t)\})} \ge C (T-t)^{-\frac{p-1}{p}} \cdot e^{\frac{p-1}{p}\sqrt{2|\ln (T-t)|}}
\eas
for some $C>0$ and all $t<T$ sufficiently close to $T$, whence in particular, since $\lam(t)\to 0$ as $t\nearrow T$,
\be{lp}
	\limsup_{t\nearrow T} \Big\{ (T-t)^{\beta(p)} \|u(\cdot,t)\|_{L^p(B_R(0))} \Big\} \ge \del
	\qquad \mbox{for all }  R\in (0,R_0)
\ee
with some $\del>0$, and with the number $\beta(p):=\frac{p-1}{p}$ which, in line with an evident mass conservation property in (\ref{0}),
satisfies $\beta(p)\to 0$ as $p\to 1$. Apart from that, (\ref{proto}) may hint on possible behavior also of expressions 
significantly closer to spatial $L^1$ norms; for instance, there exists $\del>0$ such that for the solution from \cite{HV} we have
\be{log}
	\int_{B_R(0)} u(\cdot,t) \ln u(\cdot,t) \ge \del \cdot \ln \frac{1}{T-t}
	\qquad \mbox{for all }  R\in (0,R_0)
\ee
for any $t<T$ in suitably small distance to $T$.\abs
To the best of our knowledge, quantitative information such as that in (\ref{lp}) and (\ref{log}) so far seems available only
for the particular solutions from \cite{HV} and \cite{schweyer};
for more general solutions blowing up at time $T$, only some fairly coarse properties related to space-time asymptotics 
have been asserted up to now, such as the fact that
at each blow-up point $x_0\in\bom\subset\R^2$ we have (\cite{NSS2000})
\be{coarse}
	\int_{B_R(x_0)\cap\Om} u(\cdot,t) \ln u(\cdot,t) \to + \infty
	\quad \mbox{as } t\nearrow T
	\qquad \mbox{for all } R>0, 
\ee
and that, when $\Om\subset\R^n$ with $n\ge 2$ but then only at a spatially global level, 
the family $(u^\frac{n}{2}(\cdot,t))_{t\in (0,T)}$ cannot be uniformly integrable over $\Om$ (\cite{cao_arxiv}).\abs
{\bf Main results.} \quad
The purpose of the present manuscript is to discuss how far properties such as those in (\ref{lp}) and even (\ref{log})
are shared by arbitrary non-global solutions to (\ref{0}) in the case when $\Om$ is a bounded planar domain.
In order to prepare the formulation of our main results in this regard, let us recall the following outcome
of standard approaches toward the construction of maximally extended solutions (\cite{horstmann_win}):
\begin{prop}\label{prop0}
  Let $\Om\subset\R^2$ be a bounded domain with smooth boundary, and suppose that
  \be{init}
	\lbal
	u_0 \in C^0(\bom) \mbox{ is nonnegative with $u_0\not\equiv 0$, \qquad and that} \\[1mm]
	v_0 \in \bigcup_{q>2} W^{1,q}(\Om) \mbox{ is nonnegative.}
	\ear
  \ee
  Then there exist $T\in (0,\infty]$ as well as uniquely determined functions
  \be{reg}
	\lbal
	u\in C^0(\bom\times [0,T)) \cap C^{2,1}(\bom\times (0,T))
	\qquad \mbox{and} \\[1mm]
	v \in \bigcup_{q>2} C^0([0,T);W^{1,q}(\Om)) \cap C^{2,1}(\bom\times (0,T))
	\ear
  \ee
  such that $u>0$ and $v\ge 0$ in $\Om\times (0,T)$,
  that $(u,v)$ solves (\ref{0}) in the classical sense, and that
  \be{ext}
	\mbox{if $T<\infty$, \quad then \quad}
	\limsup_{t\nearrow T} \|u(\cdot,t)\|_{L^\infty(\Om)}=\infty.
  \ee
\end{prop}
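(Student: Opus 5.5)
We plan to prove Proposition \ref{prop0} by the standard contraction-mapping approach for local existence, followed by a continuation argument and separate regularity and positivity statements. Fix $q>2$ such that $v_0\in W^{1,q}(\Om)$, and let $(e^{t\Del})_{t\ge 0}$ denote the Neumann heat semigroup on $\Om$. For small $T_0\in (0,1)$ and $R:=\|u_0\|_{L^\infty(\Om)}+\|v_0\|_{W^{1,q}(\Om)}+1$, we will work in the closed set $S$ of all pairs
\[
  (u,v)\in C^0(\bom\times[0,T_0])\times C^0([0,T_0];W^{1,q}(\Om))
\]
satisfying $\|u\|_{L^\infty}+\sup_t\|v(\cdot,t)\|_{W^{1,q}}\le R$. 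On $S$ we define $\Phi=(\Phi_1,\Phi_2)$ by
\[
  \Phi_1(u,v)(t)=e^{t\Del}u_0-\int_0^t e^{(t-s)\Del}\na\cdot\big(u\na v\big)(s)\,ds
\]
and
\[
  \Phi_2(u,v)(t)=e^{t(\Del-1)}v_0+\int_0^t e^{(t-s)(\Del-1)}u(s)\,ds.
\]
The key tools are the known $L^p$--$L^q$ smoothing estimates for $e^{t\Del}\na\cdot$ (with the Neumann boundary condition built in) and for $\na e^{t\Del}$. Since $u\na v\in L^q$ with $q>2=n$, we obtain
\[
  \|e^{t\Del}\na\cdot w\|_{L^\infty}\le C\,t^{-\frac12-\frac1q}\|w\|_{L^q},
\]
and $-\frac12-\frac1q>-1$ makes this integrable in time. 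Likewise
\[
  \|\na e^{t\Del}w\|_{L^q}\le C\,t^{-\frac12}\|w\|_{L^\infty}.
\]
Both kernels are integrable, so $\Phi$ maps $S$ into itself and is a contraction once $T_0$ is small. The smallness of $T_0$ depends only on $\|u_0\|_{L^\infty}+\|v_0\|_{W^{1,q}}$. Continuity in time at $t=0$ follows from the strong continuity of the semigroups on $C^0(\bom)$ and $W^{1,q}$.

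Next we will upgrade the mild solution to a classical one in $\bom\times(0,T_0]$. Starting from $\na v\in L^\infty((\tau,T_0);L^q)$, we will bootstrap: first obtain $v\in L^\infty(W^{1,\infty})$ away from $t=0$, then Hölder continuity of $u$ by standard parabolic Hölder estimates, and then Schauder theory applied alternately to both equations. This yields $u,v\in C^{2,1}(\bom\times(0,T_0])$. Nonnegativity of $v$ follows from the comparison principle once $u\ge 0$. For $u$, we rewrite the first equation as $u_t=\Del u-\na u\cdot\na v-u\Del v$, which is linear in $u$ with bounded coefficients on $[\tau,T_0]$. Then $u\ge 0$ follows from the maximum principle, using a sign-truncation testing argument near $t=0$ where only the mild formulation is available. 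Strict positivity $u>0$ follows from the strong maximum principle together with $u_0\not\equiv 0$.

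Uniqueness in the class (\ref{reg}) will come from a Gronwall-type argument on the difference of two solutions, estimated in the same norms as in the contraction step. This requires checking that classical solutions in this class coincide with mild ones, which follows from the variation-of-constants formula. Finally, we let $T$ be the supremum of all existence times and argue as follows. If $T<\infty$ but $\|u(\cdot,t)\|_{L^\infty}$ stayed bounded, then the variation-of-constants formula would bound $\|v(\cdot,t)\|_{W^{1,q}}$ uniformly, since the estimate $\|\na e^{t\Del}w\|_{L^q}\le Ct^{-1/2}\|w\|_{L^\infty}$ is integrable up to $T$. Because the local existence time depends only on these two norms, we could restart the construction from some $t_0$ close to $T$ and extend beyond $T$, a contradiction; this gives (\ref{ext}). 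The main technical obstacle is the low regularity at $t=0$ ($v_0$ only in $W^{1,q}$, $u_0$ only continuous), together with correctly handling the Neumann boundary in the divergence-form semigroup estimate. Both are addressed by the cited smoothing estimates, so we expect no genuine difficulty beyond careful bookkeeping.
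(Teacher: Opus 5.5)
Your proposal is correct: it is the standard Banach fixed-point, parabolic-regularity and continuation argument, which the paper does not carry out itself but simply recalls from \cite{horstmann_win}. One bookkeeping adjustment is needed so that $\Phi$ really maps $S$ into itself: $R$ should absorb the operator-norm bound of $e^{t(\Del-1)}$ on $W^{1,q}(\Om)$, which may exceed one (alternatively, give $W^{1,q}(\Om)$ an equivalent norm in which this semigroup is contractive).
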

Henceforth restricting ourselves to the analysis of non-global solutions, we shall refer to the standard notion of "blow-up point",
as specified in the following.
\begin{defi}\label{db}
  Let $\Om\subset\R^2$ be a bounded domain with smooth boundary, assume (\ref{init}), and suppose that for $(u,v)$ and $T$ as in
  Proposition \ref{prop0} we have
  $T<\infty$. Then we call $x_0\in\bom$ a {\em blow-up point} of $(u,v)$ if 
  \be{db1}
	\limsup_{t\nearrow T} \|u(\cdot,t)\|_{L^\infty(B_R(x_0)\cap\Om)} =\infty
	\qquad \mbox{for all } R>0.
  \ee
  We moreover let $\B$ denote the set of all blow-up points of $(u,v)$.
\end{defi}
{\bf Remark.} \quad
It can readily be verified that in the situation of Definition \ref{db}, $x_0\in\bom$ is a blow-up point of $(u,v)$ if and only if
there exist $(x_k)_{k\in\N}\subset\Om$ and $(t_k)_{k\in\N} \subset (0,T)$ such that $x_k\to x_0, t_k\nearrow T$ and
$u(x_k,t_k)\to +\infty$ as $k\to\infty$. 
Accordingly, for the corresponding blow-up set we have $\B\ne\emptyset$.\abs
Our main result now asserts that a property of the form in (\ref{log}) can indeed be observed near any blow-up point of arbitrary
non-global solutions:
\begin{theo}\label{theo11}
  Let $\Om\subset\R^2$ be a bounded domain with smooth boundary.
  Then one can choose $\del_0=\del_0(\Om)>0$ in such a way that if $u_0$ and $v_0$ are such that (\ref{init}) holds,
  and that in Proposition \ref{prop0} we have $T<\infty$, then each $x_0\in\B$ has the property that
  \be{11.1}
	\limsup_{t\nearrow T} \frac{1}{\ln\frac{T}{T-t}} \cdot \int_{B_R(x_0)\cap\Om} u(\cdot,t)\ln u(\cdot,t) >\del_0
	\qquad \mbox{for all } R>0;
  \ee
  that is, given any such $x_0$ and an arbitrary $R>0$ one can find $(t_k)_{k\in\N} = (t_k^{(x_0,R)})_{k\in\N} \subset (0,T)$
  fulfilling $t_k\nearrow T$ as $k\to\infty$ and
  \bas
	\int_{B_R(x_0)\cap \Om} u(\cdot,t_k) \ln u(\cdot,t_k) \ge \del_0 \cdot \ln \frac{T}{T-t_k}
	\qquad \mbox{for all } k\in\N.
  \eas
\end{theo}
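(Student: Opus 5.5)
We argue by contradiction. Assume that for some $x_0\in\B$ and $R>0$ we have, for all $t\in(t_0,T)$,
\[
  \int_{B_R(x_0)\cap\Om} u\ln u \le \del_0 \ln\frac{T}{T-t}.
\]
The aim is to show that for $\del_0$ small, depending only on $\Om$, $u$ stays bounded near $x_0$, which contradicts $x_0\in\B$.

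The basic tool is a localized version of the Nagai--Senba--Suzuki energy argument. Fix a cutoff $\zeta\in C_0^\infty(B_R(x_0))$ with $\frac{\pa\zeta}{\pa\nu}=0$ on $\pO$. Testing the first equation of (\ref{0}) by $\zeta\ln u$ and combining with the localized $v$-equation gives a differential inequality for
\[
  y(t):=\io \zeta u\ln u+\frac{1}{2}\io\zeta|\na v|^2 .
\]
The cross-diffusive term $\io \zeta u\Del v$ is handled through a localized Gagliardo--Nirenberg/Moser--Trudinger inequality of the form
\[
  \io\zeta u^2 \le \eta \Big(\io_{\supp\zeta} u\ln u\Big)\io\frac{\zeta|\na u|^2}{u}+C_\eta .
\]
In dimension two the entropy enters \emph{linearly} here. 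The point is to use the assumed logarithmic bound quantitatively and to keep every constant depending only on $\Om$ (and on the mass, through scaling), not on the solution.

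\textbf{Step 1.} Derive a local $L^p$ estimate. Test the $u$-equation by $\zeta^m u^{p-1}$ with $p>1$ fixed, say $p=2$. The drift contributes a term of size $\io\zeta^m u^{p+1}$. By the two-dimensional inequality
\[
  \io \zeta^m u^{p+1}\le C\Big(\io_{B_R}u\ln u+1\Big)\io|\na(\zeta^{m/2}u^{p/2})|^2+\text{lower order},
\]
whose constant $C$ depends only on $\Om$, this term is absorbed when the entropy is small. Under our hypothesis the entropy is not small but grows like $\del_0\ln\frac{T}{T-t}$. The resulting ODE therefore takes the form
\[
  w'(t)\le c_1\Big(\del_0\ln\frac{T}{T-t}\Big)\,\mathcal{D}(t)-c_2\,\mathcal{D}(t)+\dots,
\]
which is insufficient on its own, because $\ln\frac{T}{T-t}\to\infty$ eventually beats $c_2$. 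The better route is to work with $\ln$-weighted quantities: the entropy bound should translate into $L^p$ growth of order at most $(T-t)^{-\kappa\del_0}$ for some $\kappa=\kappa(\Om,p)$. One way to obtain this is to compare with the "entropy dissipation" inequality, integrating $\io\zeta u|\na\ln u|^2$ over time. Integrating the localized energy identity yields
\[
  \int_{t_0}^t\io\zeta\frac{|\na u|^2}{u}\le C+C\del_0\ln\frac{T}{T-t},
\]
up to terms controlled by $\|\na v\|$ on a larger ball. This dissipation bound then feeds a Gronwall argument for $\io\zeta u^p$, giving
\[
  \io\zeta u^p(\cdot,t)\le C\Big(\frac{T}{T-t}\Big)^{\kappa\del_0}.
\]
Here $\kappa\del_0$ can be made as small as we like.

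\textbf{Step 2.} Use parabolic smoothing to rule out blow-up. Once $\|u(\cdot,t)\|_{L^p(B_{R/2})}\le C(T-t)^{-\epsilon}$ with $\epsilon$ small, local smoothing on the interval $[t-(T-t)/2,t]$ applies: Neumann semigroup estimates on $\na v$, then $L^p$–$L^\infty$ estimates for $u$. These yield the pointwise bound
\[
  \|u(\cdot,t)\|_{L^\infty(B_{R/4})}\le C(T-t)^{-\epsilon'},
\]
with $\epsilon'<1$ once $\epsilon$ is small. The non-degeneracy result of \cite{mizoguchi_souplet} states that at any blow-up point
\[
  \limsup_{t\nearrow T}(T-t)\|u\|_{L^\infty(B_r(x_0)\cap\Om)}\ge\del \quad\text{for all } r>0,
\]
with $\del$ depending only on $\Om$. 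A bound with exponent $\epsilon'<1$ forces $(T-t)\|u\|_{L^\infty}\to0$, which contradicts this and gives the theorem with $\del_0$ depending on $\Om$ only. Alternatively, one can aim directly at $\epsilon'<1$ being enough for $\|\na v\|_{L^q}$ to stay bounded, and then bootstrap to boundedness.

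\textbf{Main obstacle.} The hard part is Step 1: converting a \emph{logarithmically growing} local entropy into a \emph{polynomial} bound $(T-t)^{-\kappa\del_0}$ with $\kappa$ independent of the solution. The localized Gagliardo--Nirenberg absorption argument only works when the entropy is below a fixed threshold, so a naive version breaks down. I would first try to treat $\ln\frac{T}{T-t}$ as a slowly varying coefficient. The idea is to choose $\lambda(t)\sim\del_0\ln\frac{T}{T-t}$ and use the inequality
\[
  \io\zeta^m u^{p+1}\le \eta\io|\na(\zeta^{m/2}u^{p/2})|^2+C e^{C\lambda/\eta}\Big(\io\zeta u\Big)^{\dots}.
\]
This form follows from splitting $u$ at level $e^{\lambda/\eta}$, noting that the region where $u$ is large has mass at most $\lambda/\ln(\text{level})$. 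The factor $e^{C\lambda}$ then equals $(T-t)^{-C\del_0}$ precisely, which yields the desired small-power growth.

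Boundary points $x_0\in\pO$ require flattening of the boundary and a Neumann-compatible cutoff, but otherwise proceed in the same way.
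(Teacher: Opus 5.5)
You have the right quantitative mechanism. Splitting $u$ at a level exponential in (local entropy)$/\eta$ means the hypothesis enters only through a factor $(\frac{T}{T-t})^{C\del_0/\eta}$, which is integrable up to $T$ for small $\del_0$. This is exactly how the paper controls $\int\vp^4u^2$ (Lemmas \ref{lem788} and \ref{lem799}, with exponent $\frac12$).

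The genuine gap is the signal gradient near $x_0$, which you dispose of as ``terms controlled by $\|\nabla v\|$ on a larger ball''. No such control is available a priori. Mass conservation only bounds $\nabla v$ in $L^q(\Omega)$ for $q<2$, and $\int_{B_r(x_0)\cap\Omega}|\nabla v|^2$ need not stay bounded near a blow-up point. Yet your functional needs it in two places:
\begin{itemize}
\item Differentiating the $\int\zeta|\nabla v|^2$ part of your $y$ produces, after Young's inequality, integrals of $|\nabla v|^2|\nabla\zeta|^2$ (with some power of $\zeta$) over the transition region of the cut-off.
\item The entropy identity contains the cross term $\int\zeta^3u\ln u\,\nabla v\cdot\nabla\zeta$, which the paper singles out as the most problematic one.
\end{itemize}
Your Step 1 has a related flaw: it treats the drift as if $-\Delta v=u$. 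Testing by $\zeta^mu^{p-1}$ produces $-\frac{p-1}{p}\int\zeta^mu^p\Delta v$ plus cut-off terms. In the fully parabolic system $-\Delta v=u-v-v_t$, so $\int\zeta^mu^{p+1}$ does not appear on its own, and local control of $v$ is needed there too.

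Closing this gap is where the paper does most of its work, in three steps:
\begin{enumerate}
\item Testing the $v$-equation by $\vp^2v$ and using Moser--Trudinger together with (\ref{H}) gives $\int_{t_0}^T\int_{B_{\theta R}(x_0)\cap\Omega}|\nabla v|^2<\infty$ (Lemma \ref{lem3}).
\item The combined functional $\Ep+\Fp$ has complementary dissipation, so its growth is driven only by time-integrable terms. The Moser--Trudinger lower bound $\Fp\ge-\Gamma_5\int_{\supp\vp}u\ln(u+e)-C$ (Lemma \ref{lem79}), the term $\int\vp^4|\nabla v|^2$ in $\Ep$, and (\ref{H}) then give the pointwise-in-time estimate $\int_{B_{\theta^2R}(x_0)\cap\Omega}|\nabla v|^2\le C(1+\ln\frac{T}{T-t})$ (Lemma \ref{lem8}).
\item The localized Gagliardo--Nirenberg inequality of Lemma \ref{lem33} bounds $\int\vp^4|\nabla v|^4$ by $(\int_{\supp\vp}|\nabla v|^2)\int\vp^4|\Delta v|^2$ plus lower-order terms. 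It relies on the Neumann-compatible cut-offs of Lemma \ref{lem21} and elliptic $W^{2,2}$ estimates. Via H\"older, the cross term is then absorbed into $\int\vp^4|\Delta v|^2$ and $\int\vp^4u^2$, at the price of a forcing $C(1+\ln\frac{T}{T-t})^2$ that is still integrable up to $T$ (Lemma \ref{lem9}).
\end{enumerate}
The outcome is a time-independent bound on $\int_{B_{\theta^3R}(x_0)\cap\Omega}u\ln u$, which contradicts Proposition \ref{prop10}.

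Note also that once the localized entropy inequality has integrable forcing, the entropy is bounded outright. Your detour through polynomial $L^p$ growth, parabolic smoothing and the Mizoguchi--Souplet criterion is therefore unnecessary. The real difficulty is the local control of $\nabla v$, which your proposal assumes rather than proves.
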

As a fairly simple consequence, any such blow-up point actually also enjoys a property resembling that in (\ref{lp}):
\begin{cor}\label{cor12}
  Let $\Om\subset\R^2$ be a bounded domain with smooth boundary, and let $\del_0=\del_0(\Om)$ be as in Theorem \ref{theo11}.
  Then whenever (\ref{init}) holds and in Proposition \ref{prop0} we have $T<\infty$, then there exists $C(u_0,v_0)>0$ such
  that for arbitrary $x_0\in\B$ and any $p\in (1,\infty]$,
  \be{12.1}
	\limsup_{t\nearrow T} \bigg\{ (T-t)^{\frac{p-1}{p} \cdot \frac{\del_0}{m}} \|u(\cdot,t)\|_{L^p(B_R(x_0)\cap\Om)} \bigg\}
	\ge C(u_0,v_0)
	\qquad \mbox{for all } R>0,
  \ee
  where $m:=\io u_0$, and where $\frac{p-1}{p}:=1$ if $p=\infty$.
\end{cor}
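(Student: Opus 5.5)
The plan is to deduce Corollary \ref{cor12} from Theorem \ref{theo11} through an elementary interpolation. The idea is that the localized $L\log L$ quantity is controlled by the logarithm of an $L^p$ norm, weighted by the mass.

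Fix $x_0\in\B$, $R>0$ and $p\in(1,\infty]$. Write $B:=B_R(x_0)\cap\Om$ and $u=u(\cdot,t)$. Mass conservation gives $\int_B u\le \io u=m$. We also use $\int_B u\ln u \le \int_{B\cap\{u\ge 1\}} u\ln u$, since the integrand is negative where $u<1$.

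The key step is Jensen's inequality for the concave logarithm, applied with respect to the probability measure $d\mu:=u\,dx/\int_B u$ on $B$. For finite $p$ this yields
\bas
	\int_B u\ln u
	= \frac{M}{p-1}\int_B \ln u^{p-1}\,d\mu
	\le \frac{M}{p-1}\ln\Big(\frac{1}{M}\int_B u^p\Big)
	= \frac{p}{p-1}\,M\ln\frac{\|u\|_{L^p(B)}}{M^{1/p}},
	\qquad M:=\int_B u.
\eas
For $p=\infty$ one simply has $\int_B u\ln u\le M\ln\|u\|_{L^\infty(B)}$ whenever $\|u\|_{L^\infty(B)}\ge 1$. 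To handle the factor $M$, note that $M\in(0,m]$ and that $s\mapsto -s\ln s$ is bounded on $(0,m]$ by some $c(m)$. Hence, as long as $\|u\|_{L^p(B)}\ge 1$,
\bas
	\int_B u\ln u\le \frac{p}{p-1}\,m\ln\|u(\cdot,t)\|_{L^p(B)} + c_1(m).
\eas
If instead $\|u\|_{L^p(B)}<1$, the left side is bounded by a constant $c_2(m)$, so the same estimate holds with $c_1$ replaced by $\max(c_1,c_2)$. Crucially, this constant depends only on $m$, and hence on $(u_0,v_0)$, and not on $p$.

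Now insert the sequence $t_k$ from Theorem \ref{theo11}, which satisfies $\int_B u\ln u\ge\del_0\ln\frac{T}{T-t_k}$. This gives
\bas
	\ln\|u(\cdot,t_k)\|_{L^p(B)}\ge \frac{p-1}{p}\cdot\frac{\del_0}{m}\ln\frac{T}{T-t_k}-\frac{p-1}{pm}c_1,
\eas
and therefore
\bas
	(T-t_k)^{\frac{p-1}{p}\frac{\del_0}{m}}\|u(\cdot,t_k)\|_{L^p(B)}\ge T^{\frac{p-1}{p}\frac{\del_0}{m}}e^{-c_1/m}.
\eas
The right-hand side is bounded below by $\min\{1,T^{\del_0/m}\}e^{-c_1/m}$, uniformly in $p\in(1,\infty]$. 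This can be taken as $C(u_0,v_0)$, since $T$ is determined by the data.

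The only slightly delicate point is keeping the constant independent of $p$, $x_0$ and $R$, which the bound above achieves. The case $p=\infty$ is handled by the direct estimate $\ln u\le\ln\|u\|_\infty$, with $\frac{p-1}{p}=1$.
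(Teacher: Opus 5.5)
Your strategy is the paper's: Jensen's inequality for $\ln$ with respect to $u\,dx/\int_B u$, then $\int_B u\le m$, then the sequence $(t_k)$ from Theorem \ref{theo11}. However, the step you call crucial is false as stated. Expanding your Jensen estimate gives
\[
  \int_B u\ln u \le \frac{p}{p-1}\,M\ln\|u\|_{L^p(B)} + \frac{1}{p-1}\,\big(-M\ln M\big).
\]
The additive constant is therefore $\frac{1}{p-1}\sup_{s>0}(-s\ln s)=\frac{1}{(p-1)e}$, which blows up as $p\searrow 1$; you dropped the factor $\frac{1}{p-1}$. No $p$-independent $c_1(m)$ can work. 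For $M\in(0,1)$, take $u=\frac{M}{\varepsilon}$ on a subset of $B$ of measure $\varepsilon:=M^{\frac{p}{p-1}}$ and $u=0$ elsewhere in $B$. Then $\|u\|_{L^p(B)}=1$, yet $\int_B u\ln u=\frac{1}{p-1}M\ln\frac{1}{M}\to\infty$ as $p\searrow 1$. The corollary nevertheless survives, because in your last step this constant gets multiplied by $\frac{p-1}{pm}$. You obtain $\ln\|u(\cdot,t_k)\|_{L^p(B)}\ge\frac{p-1}{p}\cdot\frac{\del_0}{m}\ln\frac{T}{T-t_k}-\frac{1}{pme}$, hence the lower bound $T^{\frac{p-1}{p}\cdot\frac{\del_0}{m}}e^{-\frac{1}{pme}}\ge\min\{1,T^{\frac{\del_0}{m}}\}\,e^{-\frac{1}{me}}$, which is exactly the paper's constant. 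So uniformity in $p$ comes from this cancellation, not from the Jensen constant being free of $p$.

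Your treatment of the case $\|u\|_{L^p(B)}<1$ also fails as written. There $\frac{p}{p-1}m\ln\|u\|_{L^p(B)}$ is negative, possibly arbitrarily so. Hence a bound $\int_B u\ln u\le c_2$ does not yield $\int_B u\ln u\le\frac{p}{p-1}m\ln\|u\|_{L^p(B)}+\max\{c_1,c_2\}$; moreover, $c_2$ is again of size $\frac{1}{(p-1)e}$, not a function of $m$ alone. This case must instead be excluded along the sequence. For fixed $p$, the displayed estimate shows $\int_B u\ln u\le\frac{1}{(p-1)e}$ whenever $\|u\|_{L^p(B)}<1$, whereas $\int_B u(\cdot,t_k)\ln u(\cdot,t_k)\ge\del_0\ln\frac{T}{T-t_k}\to\infty$. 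Hence $\|u(\cdot,t_k)\|_{L^p(B)}\ge 1$ for all large $k$, which is all the $\limsup$ requires; this is precisely the paper's passage to a subsequence. For $p=\infty$ the condition $\|u(\cdot,t_k)\|_{L^\infty(B)}\ge 1$ holds automatically, since otherwise $\int_B u\ln u<0$.
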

{\bf Main ideas.} 
To prove Theorem \ref{theo11}, we will rely on a
proof by contradiction. More specifically for a significant portion of this manuscript, we will work under the assumption that for a solution $(u,v)$ to (\ref{0}) blowing up at finite time $T$ the blow-up asymptotics outlined in (\ref{11.1}) are not satisfied at some blow-up point $x_0 \in \B$ for some radius $R > 0$. This means more concretely that for any $\delta > \delta_0$ there must thus exist a time $t_0 \in [0,T)$ such that 
\be{mi.1}
  \int_{B_R(x_0)\cap\Omega} u \ln(u + e) \leq \delta \cdot \ln\frac{T}{T-t} \qquad \mbox{for all } t \in (t_0, T).
\ee
Using this upper estimate, we then show that $u$ in fact must have already have been uniformly bounded up to the blow-up time $T$ in an appropriate sense and thus rule out $x_0$ as a blow-up point, which leads us to the promised contradiction. We will now give a brief sketch of how the estimate in (\ref{mi.1}) can be leveraged toward this end. In this context, $\vp$ will typically be a cut-off function localized around $x_0$. Crucially, it will not necessarily always be the same cut-off function as in later steps of our argument it is often necessary to localize successively more and more to make use of information gained in previous steps. 
\abs
At the very core of our argument lies a localized version of an energy functional commonly encountered in the study of the Keller-Segel system (cf. Lemma \ref{lem_E}), namely
\be{E0}
  \mathcal{E}_\vp(t) := \io \vp^4 u(\cdot, t)(\ln u(\cdot, t) - 1) + \io \vp^4 |\nabla v(\cdot, t)|^2.
\ee
If we can show that this functional is uniformly bounded in time, blow-up is already ruled out for the point $x_0$ due to (\ref{coarse}) and our argument is complete.
\abs
Before we dive into the intricacies introduced by this localization, 
let us briefly consider the simpler case of $\vp = 1$ to better illustrate how information of the type found in (\ref{mi.1}) plays into showing that $\mathcal{E}_\vp$ stays bounded. In this case it is easy to see (cf. Lemma \ref{lem_E}) that 
\bas
  \mathcal{E}'_1(t) + \io \frac{|\nabla u|^2}{u} + \frac{1}{2} \io |\Delta v|^2 \leq 5\io u^2.
\eas
We can then make use of a subtle consequence of the Sobolev inequality (cf. Lemma \ref{lem788}), which has been previously used in \cite{BHN} and \cite{NSS2000}, combined with a global counterpart to information of the type seen in (\ref{mi.1}) to estimate
\bas
  \mathcal{E}'_1(t) &\leq& c_1 \exp\left(c_1 \io u\ln( u + e) \right) + c_1\left(\io u_0\right)^2 \leq c_1 \exp\left(c_1\delta \ln\frac{T}{T-t} \right) + c_1\left(\io u_0\right)^2  \\
  &\leq& c_2(T-t)^{-\frac{1}{2}} + c_2 
\eas
under the assumption that $c_1\delta < \frac{1}{2}$. In the whole-domain case, this already completes the argument. Notably even in the localized version of this argument, this is the central point where our choice of $\delta_0$ is crucial and it is thus easy to see that $\delta_0$ only depends on certain Sobolev constants and thus the domain (cf. Lemma \ref{lem799}).
\abs
While most of this argument readily transfers to the localized case as most additional terms introduced by the presence of a cut-off function can be easily absorbed by the dissipative structure seen above (cf. Lemma \ref{lem9}), the localization still introduces some new critical terms. To handle the most problematic of these terms, i.e.\ $\io \vp^3 u\ln(u)\nabla v \cdot \nabla \vp$, we employ localized elliptic regularity theory (cf. Lemma \ref{lem32} and \ref{lem33}) to split it up into some terms we can easily handle using dissipation as well as a term of the form $c V^2(t)$, where $V(t) := 1 + \int_{\supp \vp} |\nabla v|^2$, which remains to be controlled. 
\abs
While some basic integrability information for $V(t)$ can be easily extracted from the second equation in (\ref{0}) as seen in Lemma \ref{lem3}, to gain sufficient information about $V^2(t)$ we employ another localized version of a well-known energy functional (cf. Lemma \ref{lem7}), namely
\be{F0}
  \Fp(t)
	:=\frac{1}{2} \io \vp^4 |\na v(\cdot,t)|^2
	+ \frac{1}{2} \io \vp^4 v^2(\cdot,t)
	- \io \vp^4 u(\cdot,t) v(\cdot,t)
	+ \io \vp^4 u(\cdot,t) \big(\ln u(\cdot,t)-1\big).
\ee
Famously, the functional $\Fp$ is not necessarily bounded from below but using a consequence of the Trudinger-Moser inequality (cf. Lemma \ref{lem778} and \ref{lem79}) it is at least easy to see that 
\[
  \Fp(t) \geq \frac{1}{4}\int_\Omega \vp^4 |\nabla v|^2 - c \int_{\supp \vp} u \ln (u+e) - c.
\]
Thus showing that $\Fp$ is bounded from above combined with the information from (\ref{mi.1}) yields the last piece of the puzzle, i.e. integrability information for $V^2(t)$. While it is of course well-known that $\Fp' \leq 0$ for $\vp \equiv 1$, localization again introduces some added complications in the form of additional growth terms, which cannot be fully coped with using the dissipative properties of $\Fp$ alone. Fortunately, the dissipative structures of $\mathcal{E}_\vp$ and $\Fp$ complement each other in such a way that boundedness of a combined functional can be ensured (cf. Lemma \ref{lem8}), which yields the necessary estimate for $V^2(t)$.
\mysection{Preliminaries}
\subsection{Interpolation inequalities}
A crucial role in our analysis will be played by appropriate interpolation. In this section, we therefore separately collect
some tools in this vein.\abs
The following consequence of the two-dimensional Moser-Trudinger inequality arises as a special case of a
more general result recorded in \cite[Lemma 2.2]{win_SIMA}.
\begin{lem}\label{lem778}
  There exists $\Gamma_1=\Gamma_1(\Om)>0$ such that if $\phi\in C^0(\bom)$ and $\psi\in C^1(\bom)$ are nonnegative with $\phi\not\equiv 0$,
  then
  \bea{778.1}
	\io \phi\psi
	&\le& \frac{1}{a} \io \phi \cdot \bigg\{ \ln \phi - \ln \left(\frac{1}{|\Om|} \io \phi \right)\bigg\}
	+ \Gamma_1 a \cdot \bigg\{ \io \phi \bigg\} \cdot \io |\na\psi|^2
	+ \Gamma_1 a \cdot \bigg\{ \io \phi\bigg\} \cdot \bigg\{ \io \psi\bigg\}^2 \nn\\
	& & + \frac{\Gamma_1}{a} \io \phi
	\qquad \mbox{for all } a>0.
  \eea
\end{lem}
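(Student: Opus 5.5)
The plan is to derive (\ref{778.1}) from the Moser--Trudinger inequality through the Fenchel--Young duality between $s\ln s$ and $e^\sigma$. Write $m:=\io\phi>0$ and $\bar\psi:=\frac{1}{|\Om|}\io\psi$. The Jensen-type (Gibbs variational) inequality states that for any probability density $\phi/m$ and any measurable $g$ with $e^g$ integrable,
\[
	\io \frac{\phi}{m}\, g \le \io \frac{\phi}{m}\ln\frac{\phi/m}{1/|\Om|} + \ln\Big(\frac{1}{|\Om|}\io e^{g}\Big).
\]
This follows from Jensen's inequality applied to the concave logarithm with respect to the measure $\frac{\phi}{m}dx$, namely $\io \frac{\phi}{m}\ln\frac{e^g}{\phi|\Om|/m}\le \ln\io \frac{e^g}{|\Om|}$, restricted to the set where $\phi>0$. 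I will apply it with $g:=a(\psi-\bar\psi)$. Multiplying by $m/a$ gives
\[
	\io\phi\psi \le \frac1a\io\phi\Big\{\ln\phi-\ln\frac{m}{|\Om|}\Big\} + \frac{m}{a}\ln\Big(\frac{1}{|\Om|}\io e^{a(\psi-\bar\psi)}\Big) + m\bar\psi.
\]

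Next I will invoke the Moser--Trudinger inequality on smoothly bounded planar domains in its mean-value form. It provides $C_{MT}=C_{MT}(\Om)$ with
\[
	\frac{1}{|\Om|}\io e^{|w-\bar w|}\le C_{MT}\exp\Big(\frac{1}{8\pi}\cdot\|\na w\|_{L^2(\Om)}^2\Big)
\]
for $w\in W^{1,2}(\Om)$. The constant $\frac1{8\pi}$ is half the sharp constant, as is appropriate for the Neumann setting, but any fixed constant suffices here. Applying this with $w=a\psi$, the middle term is bounded by
\[
	\frac{m}{a}\Big(\ln C_{MT}+\frac{a^2}{8\pi}\io|\na\psi|^2\Big)=\frac{m\ln C_{MT}}{a}+\frac{a m}{8\pi}\io|\na\psi|^2.
\]
This already yields the second and fourth terms in (\ref{778.1}) once $\Gamma_1\ge\max\{\frac1{8\pi},\ln C_{MT}\}$.

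It remains to absorb $m\bar\psi=\frac{m}{|\Om|}\io\psi$, which is where the quadratic term arises. By Young's inequality, $\frac{1}{|\Om|}\io\psi\le \frac{a}{4|\Om|^2}\{\io\psi\}^2+\frac1a$. Multiplying by $m$ produces $\frac{am}{4|\Om|^2}\{\io\psi\}^2+\frac{m}{a}$, which is of the required form after enlarging $\Gamma_1$ to also dominate $\frac1{4|\Om|^2}$ and $1+\ln C_{MT}$. All constants depend only on $\Om$, which establishes the lemma for every $a>0$.

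The only delicate point is a rigorous treatment of the Gibbs step where $\phi$ vanishes on part of $\Om$: one integrates only over $\{\phi>0\}$ and uses $0\ln0=0$. A second point is checking that the Moser--Trudinger inequality is available for Neumann (non-zero-trace) functions on smooth bounded domains with the $L^2$ gradient in the exponent. This is standard (Chang--Yang), so I would simply cite it rather than reprove it; alternatively, the result follows directly as a special case of \cite[Lemma 2.2]{win_SIMA}, as the paper indicates.
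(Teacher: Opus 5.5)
Your argument is correct. The Jensen step with respect to the probability measure $\frac{\phi}{m}\,dx$ on $\{\phi>0\}$ is legitimate, since $\phi$ is bounded and $\psi\in C^1(\bom)$. The Chang--Yang inequality (indeed any non-sharp Trudinger--Moser inequality for mean-zero functions) then controls $\frac{1}{|\Om|}\io e^{a(\psi-\bar\psi)}$. Finally, Young's inequality turns $m\bar\psi$ into $\frac{am}{4|\Om|^2}\{\io\psi\}^2+\frac{m}{a}$, so $\Gamma_1:=\max\{\frac{1}{8\pi},\frac{1}{4|\Om|^2},1+\ln C_{MT}\}$ works.

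The paper gives no argument for this lemma; it only records the inequality as a special case of \cite[Lemma 2.2]{win_SIMA}. Your route is the standard duality-plus-Moser--Trudinger mechanism behind that kind of result. Compared with the citation, it is self-contained and produces an explicit $\Gamma_1$. It also shows that the term $\Gamma_1 a\{\io\phi\}\{\io\psi\}^2$ comes solely from removing the mean of $\psi$ before applying Moser--Trudinger, and that the nonnegativity of $\psi$ is never used. The citation, by contrast, is shorter and points to a more general statement.

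One cosmetic point: $\frac{1}{8\pi}$ is not half of a sharp constant. It is twice the $W^{1,2}_0$ value $\frac{1}{16\pi}$, reflecting Chang--Yang's exponent $2\pi$ in place of Moser's $4\pi$. As you note, any fixed constant suffices here.
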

We next rely on continuity of the embedding $W^{1,1}(\Om) \hra L^2(\Om)$ to derive the following variant of some Ehrling-type
interpolation inequalities originally stated in \cite[Lemma 3.2]{NSS2000} (cf.~also the precedent in \cite[p.~1200]{BHN}).
\begin{lem}\label{lem788}
  With some $\Gamma_2=\Gamma_2(\Om)>0$, any $\vp\in C^1(\bom)$ and $\psi\in C^1(\bom)$ fulfilling
  $0\le \vp\le 1$ and $\psi> 0$ in $\bom$ have the property that
  \bea{788.1}
	\io \vp^4 \psi^2
	&\le& \eta \io \vp^4 \frac{|\na\psi|^2}{\psi}
	+ 3|\Om| \exp \bigg\{ \frac{\Gamma_2}{\eta} \cdot \int_{\supp\vp} \psi\ln(\psi+e) \bigg\} \nn\\
	& & + \Gamma_2 \left\{\|\na\vp\|_{L^\infty(\Om)}^2 + 1\right\} \cdot\bigg\{ \io \psi \bigg\}^2
	\qquad \mbox{for all } \eta>0.
  \eea
\end{lem}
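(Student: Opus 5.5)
Apply the Sobolev embedding $W^{1,1}(\Om)\hra L^2(\Om)$ to $\vp^2\psi$, but only to the part where $\psi$ is large. Fix $\eta>0$, pick a level $s>e$ to be chosen later, and split
\[
  \psi=\min\{\psi,s\}+(\psi-s)_+ .
\]
Since $\vp^4\psi^2\le 2\vp^4\min\{\psi,s\}^2+2\vp^4(\psi-s)_+^2$, the truncated part gives at most $2s^2|\Om|$. This is where the exponential term comes from, once $s$ is tied to $\int_{\supp\vp}\psi\ln(\psi+e)$.

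For the large part, set $w:=\vp^2(\psi-s)_+$, which is Lipschitz. The embedding with constant $c_S=c_S(\Om)$ gives
\[
  \|w\|_{L^2}\le c_S\Big(\io|\na w|+\io w\Big).
\]
Now $|\na w|\le \vp^2|\na\psi|\chi_{\{\psi>s\}}+2\vp|\na\vp|\psi$. By Cauchy--Schwarz, the first term satisfies
\[
  \io\vp^2|\na\psi|\chi_{\{\psi>s\}}\le\Big(\io\vp^4\frac{|\na\psi|^2}{\psi}\Big)^{1/2}\Big(\int_{\supp\vp\cap\{\psi>s\}}\psi\Big)^{1/2}.
\]
The remaining terms are bounded by $(2\|\na\vp\|_{L^\infty}+1)\io\psi$. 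Squaring,
\[
  \io\vp^4(\psi-s)_+^2\le 2c_S^2\,M_s\io\vp^4\frac{|\na\psi|^2}{\psi}+2c_S^2(2\|\na\vp\|_{L^\infty}+1)^2\Big(\io\psi\Big)^2,
\]
where $M_s:=\int_{\supp\vp\cap\{\psi>s\}}\psi\le\frac{1}{\ln s}\int_{\supp\vp}\psi\ln(\psi+e)$.

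Next choose $s$ so that $4c_S^2M_s\le\eta$. It suffices to take
\[
  \ln s=\max\Big\{1,\ \frac{4c_S^2}{\eta}\int_{\supp\vp}\psi\ln(\psi+e)\Big\}.
\]
Then $2s^2|\Om|\le 2|\Om|\big(e^2+\exp\{\frac{8c_S^2}{\eta}\int_{\supp\vp}\psi\ln(\psi+e)\}\big)$. The constant $e^2$ must be absorbed; otherwise the bound carries an extra additive constant, and the stated form $3|\Om|\exp\{\cdots\}$ has no room for it. This absorption is the one genuinely delicate bookkeeping point. If $\int_{\supp\vp}\psi\ln(\psi+e)\ge\eta/(4c_S^2)$, the maximum in the choice of $\ln s$ is attained by the second entry, so $s=\exp\{\frac{4c_S^2}{\eta}\int_{\supp\vp}\psi\ln(\psi+e)\}$ exactly. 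Then $s^2$ equals the exponential, and we get $2|\Om|\exp\{\cdots\}$.

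Otherwise $\int_{\supp\vp}\psi\le\int_{\supp\vp}\psi\ln(\psi+e)<\eta/(4c_S^2)$ (using $\ln(\psi+e)\ge1$). In this case skip the truncation and apply the embedding directly to $\vp^2\psi$. The same Cauchy--Schwarz bound, now with $M:=\int_{\supp\vp}\psi$ in place of $M_s$, gives
\[
  \io\vp^4\psi^2\le 2c_S^2M\io\vp^4\frac{|\na\psi|^2}{\psi}+c\,(\|\na\vp\|_{L^\infty}^2+1)\Big(\io\psi\Big)^2\le\eta\io\vp^4\frac{|\na\psi|^2}{\psi}+c\,(\|\na\vp\|_{L^\infty}^2+1)\Big(\io\psi\Big)^2 .
\]

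Combining both cases and taking $\Gamma_2$ at least $8c_S^2$ and the constant in front of $(\io\psi)^2$ yields (\ref{788.1}). The factor $3|\Om|$ leaves slack because $\exp\ge1$. The only real work is managing the constant so that the threshold $s$ enters only through the exponential term.
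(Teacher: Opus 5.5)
Your proof is correct and is essentially the paper's argument. You truncate $\psi$ at a level $s$ with $\ln s$ proportional to $\frac{1}{\eta}\int_{\supp\vp}\psi\ln(\psi+e)$, apply the planar embedding $W^{1,1}(\Om)\hookrightarrow L^2(\Om)$ to $\vp^2(\psi-s)_+$ together with Cauchy--Schwarz, and use $\int_{\supp\vp\cap\{\psi>s\}}\psi\le\frac{1}{\ln s}\int_{\supp\vp}\psi\ln(\psi+e)$. Your case distinction is unnecessary, though: the paper simply takes $s:=\exp\{\frac{4c_1}{\eta}\int_{\supp\vp}\psi\ln(\psi+e)\}$, which automatically exceeds $1$, and $\ln s>0$ is all that last estimate needs, so no $e^2$ term ever arises.
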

\proof
  The Sobolev inequality in the two-dimensional domain $\Om$ provides $c_1=c_1(\Om)>0$ such that
  \be{788.2}
	\io \rho^2 \le c_1\cdot\bigg\{ \io |\na\rho|\bigg\}^2 + c_1 \cdot\bigg\{ \io |\rho|\bigg\}^2
	\qquad \mbox{for all } \rho\in W^{1,1}(\Om),
  \ee
  and given $\vp\in C^1(\bom;[0,1]), \psi\in C^1(\bom;(0,\infty))$ and $\eta>0$ we let $I:=\int_{\supp\vp} \psi\ln (\psi+e)$ and 
  $\xi:=e^{\frac{4c_1}{\eta}I}$, noting that $I> 0$ and hence $\ln\xi>0$.
  From (\ref{788.2}) we therefore obtain that
  \bas
	\io \vp^4 \psi^2
	&=& \int_{\{\psi>\xi\}} \vp^4 (\psi-\xi+\xi)^2	
	+ \int_{\{ \psi\le\xi\}} \vp^4 \psi^2 \\
	&\le& 2\io \vp^4 (\psi-\xi)_+^2
	+ 3|\Om| \xi^2 \\
	&\le& 2c_1 \cdot \bigg\{ \io \Big| \na \big( \vp^2 (\psi-\xi)_+\big) \Big| \bigg\}^2
	+ 2c_1 \cdot \bigg\{ \io \vp^2 (\psi-\xi)_+ \bigg\}^2
	+ 3|\Om| \xi^2 \\
	&\le& 4c_1 \cdot \bigg\{ \int_{\{\psi>\xi\}} \vp^2 |\na\psi| \bigg\}^2
	+ 16 c_1 \cdot \bigg\{ \io \vp (\psi-\xi)_+ |\na\vp| \bigg\}^2 \\
	& & + 2c_1 \cdot \bigg\{ \io \vp^2 (\psi-\xi)_+ \bigg\}^2
	+ 3|\Om|\xi^2 \\
	&\le& 4c_1 \cdot \bigg\{ \int_{\{\psi>\xi\}\cap\supp\vp} \psi \bigg\} \cdot \io \vp^4 \frac{|\na\psi|^2}{\psi}
	+ 16c_1 \|\na\vp\|_{L^\infty(\Om)}^2 \cdot \bigg\{ \io \psi\bigg\}^2 \\
	& & + 2c_1 \cdot\bigg\{ \io \psi \bigg\}^2
	+ 3|\Om|\xi^2 \\
	&\le& \frac{4c_1}{\ln\xi} \cdot \bigg\{ \int_{\{\psi>\xi\}\cap\supp\vp} \psi\ln\psi\bigg\} \cdot \io \vp^4 \frac{|\na\psi|^2}{\psi}
	+ 16c_1 \cdot \big\{ \|\na\vp\|_{L^\infty(\Om)}^2 + 1 \big\} \cdot \bigg\{ \io \psi\bigg\}^2 + 3|\Om|\xi^2.
  \eas
  Since
  \bas
	\frac{4c_1}{\ln\xi} \cdot \bigg\{ \int_{\{\psi>\xi\}\cap\supp\vp} \psi\ln\psi\bigg\} 
	\le \frac{4c_1}{\ln\xi} \cdot I = \eta
  \eas
  and
  \bas
	3|\Om|\xi^2
	= 3|\Om| \exp \bigg\{ \frac{8c_1}{\eta} \int_{\supp\vp} \psi\ln (\psi+e)\bigg\}
  \eas
  according to our definitions of $I$ and $\xi$, this entails (\ref{788.1}) with 
  $\Gamma_2\equiv \Gamma_2(\Om):=16c_1$.
\qed
In preparation for a second descendant of the Sobolev inequality, let us localize a consequence of elliptic $W^{2,2}$-regularity
as follows.
\begin{lem}\label{lem32}
  There exists $\Gamma_3=\Gamma_3(\Om)>0$ such that whenever $\vp\in C^2(\bom)$ and $\psi\in C^2(\bom)$ are such that
  $\frac{\pa\vp}{\pa\nu}=\frac{\pa\psi}{\pa\nu}=0$ on $\pO$ and $0\le\vp\le 1$,
  \bea{32.1}
	\io \vp^4 |D^2 \psi|^2
	&\le& \Gamma_3 \io \vp^4 |\Del \psi|^2 \nn \\
	& & + \Gamma_3 \cdot \big( \|D^2 \vp\|_{L^\infty(\Om)}^2 + \|\na\vp\|_{L^\infty(\Om)}^4 + 1\big) \cdot
		\bigg\{ \int_{\supp\vp} |\na\psi|^2 + \int_{\supp\vp} \psi^2  \bigg\}.
  \eea
\end{lem}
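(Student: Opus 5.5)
We reduce to the global elliptic $W^{2,2}$ estimate by applying it to the localized function $\rho:=\vp^2\psi$. Since $\frac{\pa\vp}{\pa\nu}=\frac{\pa\psi}{\pa\nu}=0$ on $\pO$, also $\frac{\pa\rho}{\pa\nu}=2\vp\psi\frac{\pa\vp}{\pa\nu}+\vp^2\frac{\pa\psi}{\pa\nu}=0$. Standard Neumann regularity in the smooth bounded domain $\Om$ therefore gives $c_1=c_1(\Om)>0$ with
\[
	\io |D^2\rho|^2 \le c_1\io |\Del\rho|^2 + c_1\io \rho^2
	\qquad\mbox{for all $\rho\in C^2(\bom)$ with } \frac{\pa\rho}{\pa\nu}=0 \mbox{ on } \pO.
\]
This follows, for instance, from the Reilly-type identity $\io|D^2\rho|^2=\io|\Del\rho|^2+\frac12\int_{\pO}\frac{\pa|\na\rho|^2}{\pa\nu}$, the boundary curvature bound $\frac{\pa|\na\rho|^2}{\pa\nu}\le c|\na\rho|^2$ on $\pO$, a trace inequality, and the interpolation $\io|\na\rho|^2\le\eta\io|D^2\rho|^2+c_\eta\io\rho^2$. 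In the writeup we would simply cite it.

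Next we expand the derivatives. We have
\[
	D^2\rho=\vp^2D^2\psi+2\vp\big(\na\vp\otimes\na\psi+\na\psi\otimes\na\vp\big)+\psi\,D^2(\vp^2),
\]
where $D^2(\vp^2)=2\vp D^2\vp+2\na\vp\otimes\na\vp$. Likewise,
\[
	\Del\rho=\vp^2\Del\psi+4\vp\na\vp\cdot\na\psi+\psi\Del(\vp^2).
\]
On the left we use $\vp^4|D^2\psi|^2\le 2|D^2\rho|^2+2|D^2\rho-\vp^2D^2\psi|^2$. On the right we bound $|\Del\rho|^2$ by $3\vp^4|\Del\psi|^2$ plus $3$ times the squares of the remaining terms. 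With $0\le\vp\le1$, every lower-order term is pointwise at most
\[
	c\big(\|\na\vp\|_{L^\infty(\Om)}^2|\na\psi|^2+(\|D^2\vp\|_{L^\infty(\Om)}^2+\|\na\vp\|_{L^\infty(\Om)}^4)\psi^2\big),
\]
and vanishes outside $\supp\vp$. Finally, $\|\na\vp\|_{L^\infty(\Om)}^2\le\|\na\vp\|_{L^\infty(\Om)}^4+1$.

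Combining these gives
\[
	\io\vp^4|D^2\psi|^2\le 6c_1\io\vp^4|\Del\psi|^2+c\big(\|D^2\vp\|_{L^\infty(\Om)}^2+\|\na\vp\|_{L^\infty(\Om)}^4+1\big)\Big\{\int_{\supp\vp}|\na\psi|^2+\int_{\supp\vp}\psi^2\Big\},
\]
because $\io\rho^2\le\int_{\supp\vp}\psi^2$. This is (\ref{32.1}).

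The only nontrivial input is the global Neumann $W^{2,2}$ estimate with a constant depending only on $\Om$. The main point requiring care is the boundary condition for $\rho$, and this is exactly why the lemma assumes $\frac{\pa\vp}{\pa\nu}=0$. The remaining steps are routine bookkeeping of cutoff derivatives.
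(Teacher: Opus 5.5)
Your proposal is correct and follows essentially the same route as the paper. Both arguments apply the global Neumann $W^{2,2}$ estimate to $\rho=\vp^2\psi$, which inherits the Neumann condition from $\frac{\pa\vp}{\pa\nu}=\frac{\pa\psi}{\pa\nu}=0$, then expand $D^2\rho$ and $\Del\rho$ and control the cutoff-derivative terms on $\supp\vp$ using $0\le\vp\le1$. The differences are only cosmetic: you work with the full Hessian instead of component by component, and you sketch a Reilly-identity justification of the elliptic estimate, which the paper simply cites.
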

\proof
  In line with standard elliptic regularity theory (\cite{GT}), we can find $c_1=c_1(\Om)>0$ such that for all $(i,j)\in\{1,2\}^2$ we have
  \be{32.2}
	\io |\pa_{x_i} \pa_{x_j} \rho|^2
	\le c_1 \io |\Del\rho|^2 + c_1 \io \rho^2
	\qquad \mbox{for all $\rho\in C^2(\bom)$ fulfilling $\frac{\pa\rho}{\pa\nu}=0$ on } \pO.
  \ee
  Given $\vp$ an $\psi$ with the listed properties, letting $\rho:=\vp^2\psi$ and computing
  \bas
	\pa_{x_i} \pa_{x_j} \rho
	= \vp^2 \pa_{x_i} \pa_{x_j} \psi
	+ 2\vp \pa_{x_i} \vp \pa_{x_j} \psi
	+ 2\vp \pa_{x_j}\vp \pa_{x_i} \psi
	+ 2\vp \pa_{x_i} \pa_{x_j} \vp \psi
	+ 2\pa_{x_i} \vp \pa_{x_j} \vp \psi
  \eas
  we readily see that
  \be{32.3}
	|\pa_{x_i} \pa_{x_j} \rho|^2
	\ge \frac{1}{2} \vp^4 |\pa_{x_i} \pa_{x_j} \psi|^2
	- 32 \vp^2 |\na\vp|^2 |\na\psi|^2
	- 16 \vp^2 |D^2\vp|^2 \psi^2
	- 16 |\na\vp|^4 \psi^2
	\qquad \mbox{in } \Om
  \ee
  for $(i,j)\in\{1,2\}^2$, because $(A+B)^2 \ge \frac{1}{2} A^2 - B^2$ and $(A+B+C+D)^2 \le 4(A^2+B^2+C^2+D^2)$ for 
  $(A,B,C,D)\in\R^4$.
  We furthermore, similarly, estimate
  \bea{32.4}
	|\Del\rho|^2
	&=& \big| \vp^2 \Del\psi + 4\vp\na\vp\cdot\na\psi + 2\vp \Del\vp \cdot \psi + 2|\na\vp|^2 \psi\big|^2 \nn \\
	&\le& 4\vp^4 |\Del\psi|^2
	+ 64 \vp^2 |\na\vp|^2 |\na\psi|^2
	+ 16 \vp^2 |\Del\vp|^2 \psi^2
	+ 16 |\na\vp|^4 \psi^2.
  \eea
  Then by first applying (\ref{32.3}) followed by (\ref{32.2}), 
  applicable here since $\frac{\pa\rho}{\pa\nu}=\vp^2 \frac{\pa\psi}{\pa\nu}+ 2\vp\psi \frac{\pa\vp}{\nu}=0$ on $\pO$, and finally (\ref{32.4}),
  we obtain that
  \bas
	\frac{1}{2} \io \vp^4 |\pa_{x_i} \pa_{x_j} \psi|^2
	&\le& 32 \|\vp\|_{L^\infty(\Om)}^2 \|\na\vp\|_{L^\infty(\Om)}^2 \int_{\supp\vp} |\na\psi|^2
	+ 16\|\vp\|_{L^\infty(\Om)}^2 \|D^2\vp\|_{L^\infty(\Om)}^2 \int_{\supp\vp} \psi^2 \\
	& & + 16 \|\na\vp\|_{L^\infty(\Om)}^4 \int_{\supp\vp} \psi^2 + c_1 \io |\Del \rho|^2 + c_1 \io \rho^2 \\
  &\le& 32 \|\vp\|_{L^\infty(\Om)}^2 \|\na\vp\|_{L^\infty(\Om)}^2 \int_{\supp\vp} |\na\psi|^2
	+ 16\|\vp\|_{L^\infty(\Om)}^2 \|D^2\vp\|_{L^\infty(\Om)}^2 \int_{\supp\vp} \psi^2 \\
  & & + 16 \|\na\vp\|_{L^\infty(\Om)}^4 \int_{\supp\vp} \psi^2 \\
	& & + 4 c_1 \io \vp^4 |\Del\psi|^2 
	+ 64 c_1 \|\vp\|_{L^\infty(\Om)}^2 \|\na\vp\|_{L^\infty(\Om)}^2 \int_{\supp\vp} |\na\psi|^2 \\
	& & + 16 c_1 \|\vp\|_{L^\infty(\Om)}^2 \|\Del\vp\|_{L^\infty(\Om)}^2 \int_{\supp\vp} \psi^2
	+ 16 c_1 \|\na\vp\|_{L^\infty(\Om)}^4 \int_{\supp\vp} \psi^2 \\
  & & + c_1 \|\vp\|_{L^\infty(\Om)}^4 \int_{\supp\vp} \psi^2,
  \eas
  which readily implies (\ref{32.1}) by grouping the integral terms containing $\psi$ and recalling that our assumptions ensure that $\|\vp\|_{L^\infty(\Om)} \leq 1$.
\qed
We can thereby establish the following localized variant of an essentially well-known Gagliardo--Nirenberg type inequality.
\begin{lem}\label{lem33}
  There exists $\Gamma_4=\Gamma_4(\Om)>0$ with the property that if $\vp\in C^2(\bom)$ and $\psi\in C^2(\bom)$ satisfy
  $\frac{\pa\vp}{\pa\nu}=\frac{\pa\psi}{\pa\nu}=0$ on $\pO$ and $0\le\vp\le 1$, then
  \bea{33.1}
	\io \vp^4 |\na\psi|^4
	&\le& \Gamma_4 \cdot \bigg\{ \int_{\supp\vp} |\na\psi|^2 \bigg\} \cdot \io \vp^4 |\Del\psi|^2 \nn\\
	& & \hs{-16mm}
	+ \Gamma_4 \cdot \big( \|D^2 \vp\|_{L^\infty(\Om)}^2 + \|\na\vp\|_{L^\infty(\Om)}^4 +1 \big) \cdot
		\Bigg\{ \bigg\{ \int_{\supp\vp} |\na\psi|^2 \bigg\}^2 + \bigg\{ \int_{\supp\vp} \psi^2 \bigg\}^2 \Bigg\}.
  \eea
\end{lem}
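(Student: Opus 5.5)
The natural route is to integrate by parts once in $\io \vp^4|\na\psi|^4$, moving one derivative onto the remaining factors, and then to control the resulting second-order terms through Lemma \ref{lem32}. Writing $\io \vp^4 |\na\psi|^2 \na\psi\cdot\na\psi$ and integrating by parts directly would put $\psi$ itself into the estimate, giving terms like $\io \vp^4\psi|\na\psi|^2|D^2\psi|$. These require $L^\infty$-type control of $\psi$, which we do not have. We therefore follow the usual Gagliardo--Nirenberg route: apply the two-dimensional Sobolev inequality (\ref{788.2}) from the proof of Lemma \ref{lem788}, that is, the embedding $W^{1,1}(\Om)\hra L^2(\Om)$, to the function $\rho:=\vp^2|\na\psi|^2$. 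This gives
\bas
	\io \vp^4|\na\psi|^4 = \io \rho^2 \le c_1 \bigg\{ \io |\na\rho| \bigg\}^2 + c_1 \bigg\{ \io \vp^2 |\na\psi|^2 \bigg\}^2 .
\eas
The last term is bounded by $c_1\{\int_{\supp\vp}|\na\psi|^2\}^2$ because $0\le\vp\le1$.

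Next, we compute $\na\rho = 2\vp\na\vp\,|\na\psi|^2 + 2\vp^2 D^2\psi\,\na\psi$, so that $|\na\rho|\le 2\|\na\vp\|_{L^\infty(\Om)}\vp|\na\psi|^2 + 2\vp^2|D^2\psi||\na\psi|$. By Cauchy--Schwarz,
\bas
	\bigg\{\io \vp^2|D^2\psi||\na\psi|\bigg\}^2 \le \bigg\{\int_{\supp\vp}|\na\psi|^2\bigg\}\cdot \io\vp^4|D^2\psi|^2 .
\eas
For the other term we use $\vp\le1$ to get $\{\io\vp|\na\psi|^2\}^2\le\{\int_{\supp\vp}|\na\psi|^2\}^2$, so it is bounded by $\|\na\vp\|^2_{L^\infty(\Om)}\{\int_{\supp\vp}|\na\psi|^2\}^2$. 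Since $\|\na\vp\|^2_{L^\infty}\le \|\na\vp\|^4_{L^\infty}+1$, this term already has the form admitted in (\ref{33.1}).

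The remaining step is to insert Lemma \ref{lem32} for $\io\vp^4|D^2\psi|^2$. This is the only place where the Neumann conditions on $\vp$ and $\psi$ are needed. It yields
\bas
	\bigg\{\int_{\supp\vp}|\na\psi|^2\bigg\}\cdot\Gamma_3\io\vp^4|\Del\psi|^2 + \Gamma_3 K\bigg\{\int_{\supp\vp}|\na\psi|^2\bigg\}\cdot\bigg\{\int_{\supp\vp}|\na\psi|^2+\int_{\supp\vp}\psi^2\bigg\},
\eas
where $K:=\|D^2\vp\|^2_{L^\infty(\Om)}+\|\na\vp\|^4_{L^\infty(\Om)}+1$. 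The first summand is exactly the leading term in (\ref{33.1}). For the mixed product we apply Young's inequality, $AB\le\frac12A^2+\frac12B^2$, which leaves only $K\{(\int_{\supp\vp}|\na\psi|^2)^2+(\int_{\supp\vp}\psi^2)^2\}$.

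Collecting all terms, we can take $\Gamma_4:=c\,(c_1+c_1\Gamma_3)$ with a harmless numerical factor $c$, which depends only on $\Om$. No genuine obstacle is expected. The one point requiring care is choosing $\rho=\vp^2|\na\psi|^2$, rather than integrating by parts, so that $\psi$ enters only through $\int_{\supp\vp}\psi^2$, which comes from Lemma \ref{lem32}.
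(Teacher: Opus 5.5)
Your proposal is correct and matches the paper's proof: the paper likewise applies the embedding $W^{1,1}(\Om)\hra L^2(\Om)$ to $\vp^2|\na\psi|^2$, writes $\na|\na\psi|^2=2D^2\psi\cdot\na\psi$, uses Cauchy--Schwarz to reach $\bigl\{\int_{\supp\vp}|\na\psi|^2\bigr\}\cdot\io\vp^4|D^2\psi|^2$, and then invokes Lemma \ref{lem32}. It closes with the same Young step, $J_1(J_1+J_2)\le 2(J_1^2+J_2^2)$.
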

\proof
  We again fix $c_1=c_1(\Om)>0$ such that in line with the Sobolev inequality,
  \bas
	\|\rho\|_{L^2(\Om)}^2 \le c_1\|\na\rho\|_{L^1(\Om)}^2 + c_1\|\rho\|_{L^1(\Om)}^2
	\qquad \mbox{for all } \rho\in W^{1,1}(\Om).
  \eas
  Abbreviating $J_1:=\int_{\supp\vp} |\na\psi|^2$ we therefore see that
  \bea{33.2}
	\io \vp^4 |\na\psi|^4
	&=& \Big\| \vp^2 |\na\psi|^2 \Big\|_{L^2(\Om)}^2 \nn\\
	&\le& c_1 \cdot \bigg\{ \io \vp^2 \Big| \na |\na\psi|^2 \Big| + 2\io \vp |\na\psi|^2 |\na\vp| \bigg\}^2 
	+ c_1\cdot \bigg\{ \io \vp^2 |\na\psi|^2 \bigg\}^2 \nn\\
	&\le& 2c_1 \cdot \bigg\{ \io \vp^2 \Big| \na |\na\psi|^2 \Big| \bigg\}^2
	+ 8c_1 \|\na\vp\|_{L^\infty(\Om)}^2 \cdot J_1^2 + c_1 J_1^2,
  \eea
  because $0\le \vp\le 1$.
  Since $\na |\na\psi|^2 = 2D^2 \psi\cdot\na\psi$, in view of the Cauchy-Schwarz inequality and Lemma \ref{lem32} we furthermore
  obtain that with $\Gamma_3=\Gamma_3(\Om)$ found there, and with $J_2:=\int_{\supp\vp} \psi^2$,
  \bas
	& & 2c_1 \cdot\bigg\{ \io \vp^2 \Big|\na |\na\psi|^2 \Big| \bigg\}^2 \\
	&\le& 8c_1 \cdot \bigg\{ \io \vp^2 |\na\psi| \cdot |D^2\psi| \bigg\}^2 \\
	&\le& 8c_1 J_1 \io \vp^4 |D^2 \psi|^2 \\
	&\le& 8c_1 \Gamma_3 J_1 \io \vp^4 |\Del\psi|^2
	+ 8c_1 \Gamma_3 \cdot \big( \|D^2\vp\|_{L^\infty(\Om)}^2 + \|\na\vp\|_{L^\infty(\Om)}^4 + 1 \big) \cdot J_1 (J_1+J_2).
  \eas
  Estimating $J_1(J_1+J_2) \le 2(J_1^2+J_2^2)$ by Young's inequality here, from (\ref{33.2}) we obtain (\ref{33.1}). 
\qed

\subsection{A family of cut-off functions}
To complement the localized interpolation inequalities of the previous section, we will now briefly present a construction of compatible cut-off functions based on some standard results from differential geometry. Most crucially, our family of cut-off functions will feature the boundary conditions necessary for the elliptic regularity theory necessary for some of the aforementioned functional inequalities.
\abs
Constructions of this type have naturally already been discussed in similar contexts in e.g.\ \cite{mizoguchi_souplet} and \cite{NSS2000}. But given the subtle differences in the properties ensured by those constructions to what we need here and given how central these families of cut-off functions are to our argument, we will still present our specific construction in full here. As a convenient feature of the techniques from the theory of manifolds at the core of our argument, our construction should further be easy to adapt to future needs in similar contexts and yields many desirable properties such as uniform boundedness (after multiplication with the scaling parameter) as an inherent feature of the methods involved without any further effort.
\begin{lem}\label{lem21}
  There exist $R_0 > 0$, $\theta \in (0,1)$ and $\Lam > 0$ such that for any choice of $x_0\in \R^2$ and $R \in (0,R_0)$  one can find
  $\vp=\vp^{(x_0,R)}\in C^2(\R^2)$ such that $0\le \vp\le 1$ in $\Om$ and
  \be{21.1}
	\vp\equiv 1	
	\quad \mbox{in } B_{\theta R}(x_0),
  \ee
  that
  \be{21.2}
	\supp \vp \subset B_R(x_0),
  \ee
  that
  \be{21.3}
	\frac{\pa\vp}{\pa\nu}(x)=0
	\qquad \mbox{for all } x\in\pO,
  \ee
  and that
  \be{21.4}
	R |\na\vp(x)| + R^2 |D^2 \vp(x)| \le \Lam
	\qquad \mbox{for all } x\in\R^2.
  \ee
\end{lem}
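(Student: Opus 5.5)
The plan is to build $\vp$ by composing a fixed radial profile with a coordinate system adapted to $\pO$ near the boundary. Near $\pO$ these coordinates make normal derivatives vanish automatically; in the interior a plain radial cut-off suffices.

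\textbf{Geometric setup.} Since $\pO$ is smooth and compact, there is $r_*>0$ such that the map
\[
  \Phi(y,s):=y-s\nu(y), \qquad y\in\pO,\ |s|<r_*,
\]
is a $C^\infty$ diffeomorphism onto the tubular neighbourhood $U:=\{x\in\R^2:\dist(x,\pO)<r_*\}$. Its inverse $x\mapsto(\pi(x),d(x))$ consists of the nearest-point projection and the signed distance, with $d>0$ inside $\Om$. Composing with the (periodic) arclength parametrization of the finitely many boundary components, this is locally a chart $(\sigma,s)$ on $U$. Its first and second derivatives, and those of its inverse, are bounded by constants depending only on $\Om$.

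\textbf{Reflected distance.} For $x,z\in U$ I would define
\[
  \rho_z(x):=\sqrt{|\pi(x)-\pi(z)|^2_*+(d(x)-d(z))^2},
\]
where $|\cdot|_*$ is a smooth version of arclength distance along $\pO$; for nearby points one may simply use the chart difference $\sigma(x)-\sigma(z)$. The key observation is that $\na d=-\nu$ on $\pO$ and $\na\pi\cdot\nu=0$ there. The case $d(z)=0$ will be handled separately.

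\textbf{Case split.} Fix a profile $\chi\in C^\infty(\R)$ with $\chi\equiv1$ on $(-\infty,\frac14]$, $\chi\equiv0$ on $[\frac12,\infty)$ and $0\le\chi\le1$.

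\emph{Interior case.} If $\dist(x_0,\pO)\ge R/2$, take $\vp(x):=\chi\big(|x-x_0|^2/R^2\big)$ (with slightly smaller radii if needed). Its support stays away from $\pO$ up to a constant factor, and the right choice of constants gives $\supp\vp\cap\pO=\emptyset$, so (\ref{21.3}) holds trivially. The bounds (\ref{21.1}), (\ref{21.2}) and (\ref{21.4}) follow by scaling, with $\Lam$ depending only on $\chi$.

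\emph{Boundary case.} Otherwise, let $z_0:=\pi(x_0)\in\pO$ and $R<R_0\ll r_*$. Working in the chart centred at $z_0$, set
\[
  \vp(x):=\chi\Big(\frac{\sigma(x)^2+d(x)^2}{c^2R^2}\Big)\quad\text{in } U, \qquad \vp:=0 \text{ elsewhere}.
\]
The argument is even in $d$, so
\[
  \pa_\nu\vp=\chi'\cdot\frac{2\sigma\,\pa_\nu\sigma+2d\,\pa_\nu d}{c^2R^2}.
\]
On $\pO$ we have $d=0$ and $\pa_\nu\sigma=\na\sigma\cdot\nu=0$, because $\sigma$ is constant along normals. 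Hence $\pa_\nu\vp=0$ on $\pO$, which is (\ref{21.3}).

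\textbf{Verifying the size conditions.} Because the chart is bi-Lipschitz with constants $L=L(\Om)$, the quantity $\sqrt{\sigma^2+d^2}$ is comparable to $|x-z_0|$. Also $|x_0-z_0|<R/2$. Choosing $c$ appropriately then gives $\supp\vp\subset B_R(x_0)$ and $\vp\equiv1$ on $B_{\theta R}(x_0)$ for a suitable $\theta=\theta(L)$. The chain rule yields
\[
  |\na\vp|\le C/R,\qquad |D^2\vp|\le C/R^2+C/R\le C'/R^2 \quad\text{for } R<R_0\le1,
\]
using the bounded second derivatives of $\sigma$ and $d$. Regularity is $C^\infty$ inside $U$, and $\vp$ vanishes near $\pa U$, so $\vp\in C^2(\R^2)$.

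\textbf{Main obstacle.} The delicate step is the boundary case with $x_0$ near but not on $\pO$. The requirement $\vp\equiv1$ on $B_{\theta R}(x_0)$ forces centring at $z_0$ rather than at $x_0$, and uniformity of $\theta$ and $\Lam$ must come from the uniform bi-Lipschitz and $C^2$ bounds of the tubular chart. If $x_0$ lies far from $\Om$, any valid cut-off is acceptable, since only the values on $\Om$ matter.
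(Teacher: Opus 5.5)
Your construction follows the paper's route: a tubular neighbourhood of $\pO$, a split into an interior and a near-boundary case, and near $\pO$ a cut-off centred at the foot point $z_0=\pi(x_0)$ that is insensitive to the normal variable at $\pO$. The paper uses the product $\psi(\frac4R(P(x_0)-P(x)))\,\psi(\frac2R(x-P(x)))$, while you use a function of $\sigma^2+d^2$. Both give (\ref{21.3}) for the same reason, and your scaling bounds for (\ref{21.4}) are fine.

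\textbf{The gap.} The step that fails is the claim that in the boundary case ``choosing $c$ appropriately then gives $\supp\vp\subset B_R(x_0)$ and $\vp\equiv1$ on $B_{\theta R}(x_0)$''. Your threshold $\dist(x_0,\pO)<R/2$ is too large for this. Let $\rho:=|x_0-z_0|$, and suppose $\pO$ is flat near $z_0$ (a smooth bounded domain may have flat boundary pieces), so that $(\sigma,d)$ are Euclidean coordinates. Your $\vp$ depends on $d$ only through $d^2$, so it is invariant under reflection across $\pO$. Hence if $\vp\equiv1$ on $B_{\theta R}(x_0)$, it is also $\equiv1$ on $B_{\theta R}(x_0^*)$ with $x_0^*:=2z_0-x_0$. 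Then $\supp\vp$ contains a point at distance $2\rho+\theta R$ from $x_0$. So (\ref{21.1}) and (\ref{21.2}) together force $2\rho+\theta R<R$. This fails for every fixed $\theta>0$ once $\rho$ is close to $R/2$, however $c$ or the profile $\chi$ is chosen.

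\textbf{The repair.} Use the boundary construction only when $\rho<\kappa R$, with $\kappa$ small in terms of the bi-Lipschitz constant $L\ge1$ of the chart, e.g.\ $\kappa=1/(8L^2)$. For $\theta\le\kappa$, the plateau condition $2L(\kappa+\theta)\le c$ and the support condition $Lc/\sqrt2+\kappa<1$ are then compatible. In the complementary case $\rho\ge\kappa R$, the interior cut-off must be supported in $B_{\kappa R}(x_0)$, e.g.\ $\chi(|x-x_0|^2/(\kappa R)^2)$, which forces $\theta\le\kappa/2$.

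This is what the paper's threshold $R/8$, together with $\theta\le\min\{\frac1{16},\frac1{8\|DP\|_{L^\infty}},\frac1{8\|D(I-P)\|_{L^\infty}}\}$, accomplishes. The paper's product form has one advantage: its support argument uses only the triangle inequality with the exact Euclidean quantities $|P(x_0)-P(x)|$ and $|x-P(x)|=\dist(x,\pO)$. So no inverse-Lipschitz bound enters, and the threshold is an absolute constant; the Lipschitz constants of $P$ and $I-P$ enter only through $\theta$. It also avoids arclength charts and the bookkeeping for several boundary components. In your version the support bound needs the inverse-Lipschitz constant of the chart, so the threshold itself must depend on $L$, or $R_0$ must be shrunk until $L$ is close to $1$. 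With the threshold corrected, the rest of your argument goes through.
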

\proof 
  As the standard characterizations of smooth boundary regularity of $\Omega$ in particular always entail that $\pO$ is an embedded smooth submanifold of $\R^2$ (\cite{wloka_pde_book}), we can employ standard results from differential geometry (\cite[Theorem 6.24]{lee_manifold_book}) to find a tubular neighborhood $\mathcal{T} = \{ x \in \R^2 \, | \, \dist(x, \pO) \leq \delta \}$ with some $\delta > 0$ and a smooth projection $P: \mathcal{T} \rightarrow \pO$ with 
  \be{21.5}
    P(x + t \nu(x)) = x \qquad \text{ for all } x \in \partial \Om, t \in [-\delta, \delta]
  \ee 
  as well as 
  \be{21.6}
    |x - P(x)| = \dist(x, \pO) \qquad \mbox{for all } x \in \mathcal{T},
  \ee 
  where $\nu(x)$ is the standard normalized outward normal vector at $x$. 
  Notably, our ability to find a minimum "thickness" $\delta$ for the whole tubular neighborhood $\mathcal{T}$ is due to the fact that the "thickness" of $\mathcal{T}$ at each point $x_0 \in \pO$ depends continuously on said point $x_0$ and our boundary is compact.
  \abs
  We now fix a standard cut-off function $\psi \in C^\infty(\R^2; [0,1])$ such that $\supp \psi \subset B_1(0)$ and $\psi \equiv 1$ on $B_\frac{1}{2}(0)$. We then set 
  \be{21.7}
    R_0 := \frac{\delta}{2} \quad \text{ and } \quad \theta := \min\left\{ \frac{1}{16}, \frac{1}{8\|D^1 P\|_{L^\infty(\mathcal{T})}}, \frac{1}{8\|D^1(I - P)\|_{L^{\infty}(\mathcal{T})}} \right\},
  \ee
  where $I:\R^2 \rightarrow \R^2$ is the identity map.
  Let now $x_0 \in \R^2$ and $R \in (0,R_0)$.
  \abs
  We first discuss the case of $\dist(x_0,\pO) < \frac{R}{8}$. We then set 
  \bas
    \vp(x) := \psi\left(\tfrac{4}{R}( P(x_0) - P(x) )\right)\cdot \psi\left(\tfrac{2}{R} ( x - P(x) )\right) \qquad \mbox{for all } x \in \mathcal{T}. 
  \eas 
  If now $x \in \mathcal{T}$ with $|x_0 - x| > R$, then 
  \bas
    R < |x_0 - x| &\leq & |x_0 - P(x_0)| + |P(x_0) - P(x)| + |P(x) - x| \\
    &<& \frac{R}{8} + |P(x_0) - P(x)| + |P(x) - x|  
  \eas
  due to (\ref{21.6}) and thus either $|P(x_0) - P(x)|$ must have been larger than $\frac{R}{4}$ or $|P(x) - x|$ must have already been larger than $\frac{5R}{8} > \frac{R}{2}$. This directly implies that $\supp \vp \subset B_{R}(x_0)$ and therefore $\vp$ can be smoothly extended to $\R^2$ by zero as $B_{R}(x_0) \subset B_{R_0}(x_0) \subset \mathcal{T}$ by our choice of $R_0$ in (\ref{21.7}). Conversely if $x \in T$ with $|x_0 - x| < \theta R$, then 
  \bas
    |P(x_0) - P(x)| \leq \|D^1 P\|_{L^\infty(T)} |x_0 - x| < \|D^1 P\|_{L^\infty(T)} \theta R \leq \frac{R}{8}.
  \eas
  and similarly
  \bas
    |x - P(x)| &\leq& |x - P(x) - (x_0 - P(x_0) )| + |x_0 - P(x_0)| \\ 
    &<& \|D^1 (I-P)\|_{L^\infty(\mathcal{T})}\theta R + |x_0 - P(x_0)| \leq \frac{R}{8} + \frac{R}{8} = \frac{R}{4} 
  \eas 
  by the mean value inequality, (\ref{21.6}) and (\ref{21.7}). This immediately gives us (\ref{21.2}).
  \abs
  Further due to (\ref{21.5}) and our choice of $\psi$ it follows that 
  \bas
    \frac{\vp(x + t \nu(x)) - \vp(x)}{t} = \frac{1}{t} \psi\left(\tfrac{4}{R}( P(x_0) - x )\right) \left\{ \psi\left(\tfrac{2t \nu(x)}{R}\right)  - \psi\left(0\right)  \right\} = 0
  \eas
  for all $x \in \partial \Omega$ and $t \in (-\frac{R}{4}, \frac{R}{4})$ and thus (\ref{21.3}).
  \abs
  We now discuss the case of $\dist(x_0,\pO) \ge \frac{R}{8}$. We then simply set 
  \bas
    \vp(x) := \psi\left( \tfrac{8}{R}( x_0 - x ) \right) \qquad \mbox{for all } x \in \R^2.
  \eas  
  As the support of $\vp$ in this case is fully contained in $B_{R/8}(x_0)$ and thus does not intersect $\pO$ by construction and $\theta \leq \frac{1}{16}$ by definition, (\ref{21.1}), (\ref{21.2}) and (\ref{21.3}) are straightforward to verify.
  \abs
  Finally as in both of our constructions $R$ only features as the scaling factor $\frac{1}{R}$ and the only other ingredients are the smooth and compactly supported functions $\psi$ and $P$, it is easy to find a uniform constant $\Lam > 0$ only depending on the finite norms $\|P\|_{C^2(\mathcal{T})}$ and $\|\psi\|_{C^2(\R^2)}$ such that (\ref{21.4}) holds by straightforward applications of the chain and product rules. 
\qed
\subsection{Basic regularity properties}
Let us now launch our analysis of the evolution system (\ref{0}) by stating, for convenient later reference, 
two simple properties of the solutions obtained in Proposition \ref{prop0}. 
Firstly, mass conservation immediately results from an integration:
\begin{lem}\label{lem_mass}
  If (\ref{init}) holds, then
  \be{mass}
	\io u(\cdot,t)=\io u_0
	\qquad \mbox{for all } t\in (0,T).
  \ee
\end{lem}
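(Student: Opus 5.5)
The plan is to integrate the first equation in (\ref{0}) over $\Om$ for fixed $t\in(0,T)$ and to show that the time derivative of the total mass vanishes. By Proposition \ref{prop0}, $u\in C^{2,1}(\bom\times(0,T))$, so $u_t$ is continuous on $\bom\times(0,T)$. Differentiating under the integral sign is therefore legitimate on every compact subinterval of $(0,T)$, and we obtain
\[
	\frac{d}{dt}\io u = \io \Del u - \io \na\cdot(u\na v).
\]
Applying the divergence theorem to both integrals leaves only the boundary terms
\[
	\int_{\pO} \frac{\pa u}{\pa\nu} - \int_{\pO} u\frac{\pa v}{\pa\nu}.
\]
Both vanish by the Neumann conditions in (\ref{0}). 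Hence $t\mapsto\io u(\cdot,t)$ is constant on $(0,T)$.

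To identify the constant, we use the continuity $u\in C^0(\bom\times[0,T))$ from (\ref{reg}). It gives uniform convergence $u(\cdot,t)\to u_0$ on the compact set $\bom$ as $t\searrow0$. Consequently $\io u(\cdot,t)\to\io u_0$, and this yields (\ref{mass}).

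No step presents a real obstacle. The only point needing care is that classical regularity holds only for $t>0$, so the derivative computation must be restricted to $(0,T)$. The value at $t=0$ is then recovered through the continuity from (\ref{reg}) rather than by evaluating the equation at $t=0$.
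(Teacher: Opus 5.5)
Your proposal is correct and takes the same approach as the paper, which obtains the identity by integrating the first equation over $\Omega$ and using the Neumann boundary conditions. Your extra care, restricting the differentiation to $(0,T)$ and identifying the constant through the continuity of $u$ on $\overline{\Omega}\times[0,T)$, just supplies details the paper leaves implicit.
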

In the considered planar setting, it is well-known that
(\ref{mass}) implies the following basic integrability feature of the second solution component.
\begin{lem}\label{lem2}
  Assume (\ref{init}). Then for all $p\in [1,\infty)$, there exists $C=C(p,u_0,v_0)>0$ such that
  \be{2.1}
	\|v(\cdot,t)\|_{L^p(\Om)} \le C
	\qquad \mbox{for all } t\in (0,T).
  \ee
\end{lem}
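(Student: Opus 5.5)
We have to prove Lemma \ref{lem2}: for every $p\in[1,\infty)$ the quantity $\|v(\cdot,t)\|_{L^p(\Om)}$ is bounded uniformly in $t\in(0,T)$. The plan is to write $v$ through the variation-of-constants formula for the Neumann heat semigroup $(e^{t\Del})_{t\ge 0}$ on $\Om$:
\bas
	v(\cdot,t)=e^{t(\Del-1)}v_0+\int_0^t e^{(t-s)(\Del-1)}u(\cdot,s)\,ds
	\qquad \mbox{for } t\in(0,T).
\eas
We then estimate the two terms separately, using only the mass identity (\ref{mass}) from Lemma \ref{lem_mass}.

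For the first term, the semigroup is contractive in $L^p(\Om)$. Since $v_0\in W^{1,q}(\Om)$ for some $q>2$, and $W^{1,q}(\Om)\hra L^\infty(\Om)$ in two dimensions, this gives $\|e^{t(\Del-1)}v_0\|_{L^p(\Om)}\le \|v_0\|_{L^p(\Om)}\le c\|v_0\|_{W^{1,q}(\Om)}$.

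For the integral term we use the standard $L^1$--$L^p$ smoothing estimate of the Neumann heat semigroup in the planar domain $\Om$. This provides $c_1>0$ such that
\bas
	\|e^{\tau\Del}\phi\|_{L^p(\Om)}\le c_1\big(1+\tau^{-(1-\frac1p)}\big)\|\phi\|_{L^1(\Om)}
	\qquad\mbox{for all $\tau>0$ and $\phi\in L^1(\Om)$.}
\eas
The factor $e^{-(t-s)}$ coming from the $-v$ term supplies exponential decay. Together with $\|u(\cdot,s)\|_{L^1(\Om)}=\io u_0$, this yields
\bas
	\int_0^t\big\|e^{(t-s)(\Del-1)}u(\cdot,s)\big\|_{L^p(\Om)}ds
	\le c_1\io u_0\cdot\int_0^\infty\big(1+\sigma^{-(1-\frac1p)}\big)e^{-\sigma}\,d\sigma.
\eas
The right-hand side is finite because $1-\frac1p<1$ makes the singularity at $\sigma=0$ integrable. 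This gives (\ref{2.1}) with $C$ depending on $p$, $\io u_0$ and $\|v_0\|_{W^{1,q}(\Om)}$, and with no dependence on $T$.

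The only point needing care is that the smoothing exponent $\frac n2(1-\frac1p)$ with $n=2$ stays strictly below $1$ for every finite $p$. This is exactly where the planar setting enters, and it is why $p=\infty$ is excluded. Nonnegativity of $u$ is used only to identify $\|u(\cdot,s)\|_{L^1(\Om)}$ with the conserved mass $\io u_0$.
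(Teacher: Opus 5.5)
Your proof is correct and matches the paper's approach: the paper proves this only by deferring to mass conservation and standard Neumann heat semigroup estimates, citing a lemma of Horstmann--Winkler. Your variation-of-constants argument, using the $L^1$--$L^p$ smoothing estimate with the integrable factor $\sigma^{-(1-\frac1p)}e^{-\sigma}$, is exactly such an argument written out.
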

\proof
  This can be obtained in a straightforward manner from (\ref{mass}) and appropriate heat semigroup estimates;
  for instance, (\ref{2.1}) directly follows from (\ref{reg}) and \cite[Lemma 4.1]{horstmann_win}.
\qed
\subsection{Evolution properties of the energy-type functionals $\Ep$ and $\Fp$}
As a final preparation, we will now derive some fundamental evolution properties of the two localized energy-type functionals $\Ep$ and $\Fp$ by way of several straightforward testing procedures. These two functionals will form the core of our later analysis of the system (\ref{0}) by not only carefully studying them individually but also by combining them to exploit mutually beneficially dissipative structures. \abs
As the quantity $\io \vp^4 u(\ln u -1)$ not only occurs in both these functionals but will also be treated in essentially the same way in both cases (as opposed to the quantity $\io \vp^4 |\na v|^2$ that is also present in both functionals but is treated very differently here), we begin by deriving the following quick lemma as a matter of convenience.

\begin{lem}\label{lem6}
  Let $\vp\in C^1(\bom)$. Then
  \be{6.1}
	\frac{d}{dt} \io \vp^4 u(\ln u -1) + \io \vp^4 \frac{|\na u|^2}{u}
	=
	\io \vp^4 \na u\cdot\na v
	- 4 \io \vp^3 \ln u\cdot (\na u-u\na v) \cdot \na\vp
  \ee
  for all $t\in (0,T)$.
\end{lem}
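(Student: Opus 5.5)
We differentiate under the integral sign, which is legitimate because $u>0$ and $(u,v)$ is classical in $\bom\times(0,T)$ by Proposition \ref{prop0}. Since $\frac{d}{du}\big[u(\ln u-1)\big]=\ln u$, the first equation in (\ref{0}) gives
\bas
	\frac{d}{dt}\io \vp^4 u(\ln u-1)
	= \io \vp^4 \ln u\cdot u_t
	= \io \vp^4 \ln u \cdot \na\cdot(\na u - u\na v).
\eas
We then integrate by parts once. The boundary integral vanishes because $\frac{\pa u}{\pa\nu}=\frac{\pa v}{\pa\nu}=0$ on $\pO$, so the normal component of the flux $\na u-u\na v$ is zero there; no boundary condition on $\vp$ is needed. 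The gradient of the weight is
\bas
	\na(\vp^4\ln u)=\vp^4\frac{\na u}{u}+4\vp^3\ln u\,\na\vp ,
\eas
and therefore
\bas
	\frac{d}{dt}\io \vp^4 u(\ln u-1)
	= -\io \vp^4 \frac{\na u}{u}\cdot(\na u-u\na v)
	- 4\io \vp^3 \ln u\,(\na u-u\na v)\cdot\na\vp .
\eas

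Expanding the first integral on the right gives $-\io \vp^4 \frac{|\na u|^2}{u}+\io \vp^4 \na u\cdot\na v$. Moving the dissipative term to the left-hand side yields exactly (\ref{6.1}) for every $t\in(0,T)$.

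The only point needing care is justifying the differentiation and integration by parts up to the boundary, i.e.\ having enough regularity of $u_t$ and $\na u$ on $\bom$. This is provided by $u\in C^{2,1}(\bom\times(0,T))$ from (\ref{reg}), so there is no real obstacle: the lemma is a direct testing of the first equation against $\vp^4\ln u$.
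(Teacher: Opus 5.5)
Your proposal is correct and is essentially the paper's proof: test the first equation against $\vp^4\ln u$, integrate by parts (the boundary term vanishes by the Neumann conditions on $u$ and $v$, with no condition needed on $\vp$), and expand $\na(\vp^4\ln u)\cdot(\na u-u\na v)$. Your remark on the regularity needed to differentiate under the integral sign and to integrate by parts up to $\pO$ only makes explicit what the paper leaves implicit.
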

\proof
  We use the first equation in (\ref{0}) to compute 
  \bas
	\frac{d}{dt} \io \vp^4 u(\ln u-1)
	&=& \io \vp^4 \ln u\cdot u_t \\
	&=& \io \vp^4 \ln u\na\cdot (\na u-u\na v) \\
	&=& -\io \Big\{ \vp^4 \frac{\na u}{u} + 4\vp^3 \ln u\na \vp\Big\} \cdot (\na u-u\na v)
	\qquad \mbox{for all } t\in (0,T),
  \eas
  which is equivalent to (\ref{6.1}).
\qed
We can now control the growth of the functional in (\ref{E}):
\begin{lem}\label{lem_E}
  Given $\vp\in C^1(\bom)$, let
  \be{E}
	\Ep(t)
	:= \io \vp^4 u(\cdot,t) \big(\ln u(\cdot,t)-1\big) + \io \vp^4 |\na v(\cdot,t)|^2
  \ee
  for all $t \in [0,T)$. Then 
  \bea{E_evol}
    \Ep'(t) + \io \vp^4 \frac{|\na u|^2}{u} + \io \vp^4 |\Delta v|^2 &\le& \io\vp^4 \na u \cdot \na v - 4\io\vp^3 \ln u \cdot (\na u - u \na v)\cdot \na \vp \nn\\
    & &+ 4 \io \vp^4 u^2 + 40 \io \vp^2 |\na v|^2 |\na\vp|^2
  \eea
  for all $t \in (0,T)$.
\end{lem}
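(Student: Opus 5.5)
The plan is to differentiate the two summands of $\Ep$ separately. The first summand, $\io \vp^4 u(\ln u-1)$, is handled entirely by Lemma \ref{lem6}, which supplies the left-hand dissipation $\io\vp^4\frac{|\na u|^2}{u}$ together with the first two terms on the right of (\ref{E_evol}). What remains is to show that
\bas
	\frac{d}{dt}\io \vp^4|\na v|^2 + \io\vp^4|\Del v|^2 \le 4\io\vp^4u^2 + 40\io\vp^2|\na v|^2|\na\vp|^2
	\qquad \mbox{for all } t\in (0,T).
\eas

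For this we use the second equation in (\ref{0}) to compute $\frac{1}{2}\frac{d}{dt}\io\vp^4|\na v|^2 = \io \vp^4\na v\cdot\na v_t$. We then integrate by parts, which is legitimate since $\frac{\pa v_t}{\pa\nu}=0$ on $\pO$; strictly speaking this step should be justified using the $C^{2,1}$ regularity in (\ref{reg}), for instance by working on $(t_0,t)$ with $t_0>0$. This gives
\bas
	\frac{1}{2}\frac{d}{dt}\io\vp^4|\na v|^2
	= -\io \vp^4 \Del v\, v_t - 4\io\vp^3 v_t\,\na v\cdot\na\vp .
\eas
Substituting $v_t=\Del v - v + u$, the first term becomes
\bas
	-\io\vp^4|\Del v|^2 + \io\vp^4 v\Del v - \io \vp^4 u\Del v .
\eas
For the middle term, a further integration by parts yields $\io\vp^4 v\Del v = -\io\vp^4|\na v|^2 - 4\io\vp^3 v\na v\cdot\na\vp$. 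The $v$-contributions arising from the cut-off term, namely $-4\io\vp^3 v\,\na v\cdot\na\vp$ twice with a sign, then combine with this. Tracking signs, they give exactly $+4\io \vp^3 v\na v\cdot\na\vp - 4\io\vp^3 v\na v\cdot\na\vp = 0$, so that only $-\io\vp^4|\na v|^2\le 0$ survives. I expect this cancellation to hold, but if it fails the leftover could be absorbed by Young's inequality using $-\io\vp^4|\na v|^2$. This bookkeeping is the main place an error could occur.

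We are left with
\bas
	-\io\vp^4|\Del v|^2 - \io\vp^4 u\Del v - 4\io\vp^3\Del v\,\na v\cdot\na\vp - 4\io\vp^3 u\,\na v\cdot\na\vp .
\eas
Each cross term is estimated by Young's inequality so as to absorb at most half of the dissipation in total:
\bas
	\Big|\io\vp^4u\Del v\Big| &\le& \frac14\io\vp^4|\Del v|^2 + \io\vp^4u^2, \\
	4\Big|\io\vp^3\Del v\,\na v\cdot\na\vp\Big| &\le& \frac14\io\vp^4|\Del v|^2 + 16\io\vp^2|\na v|^2|\na\vp|^2, \\
	4\Big|\io\vp^3u\,\na v\cdot\na\vp\Big| &\le& \io\vp^4u^2 + 4\io\vp^2|\na v|^2|\na\vp|^2 .
\eas
Collecting terms gives
\bas
	\frac{1}{2}\frac{d}{dt}\io\vp^4|\na v|^2 + \frac12\io\vp^4|\Del v|^2 \le 2\io\vp^4u^2 + 20\io\vp^2|\na v|^2|\na\vp|^2 .
\eas
Multiplying by $2$ yields the required inequality with constants $4$ and $40$, and adding Lemma \ref{lem6} gives (\ref{E_evol}).
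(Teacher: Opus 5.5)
Your proof is correct and is essentially the paper's argument: you use the same three Young estimates with the same constants, then multiply by $2$ and add Lemma \ref{lem6}. The only difference is that you integrate by parts off $v_t$ before inserting the equation, whereas the paper substitutes $\na v_t=\na(\Del v+u)-\na v$ directly and never meets the extra terms. Your order creates the terms $\pm 4\io\vp^3 v\na v\cdot\na\vp$, which do cancel exactly. That is fortunate, since your Young fallback would have left an extra $\io\vp^2 v^2|\na\vp|^2$ that is not in (\ref{E_evol}). Also, the boundary term in that integration by parts vanishes because $\frac{\pa v}{\pa\nu}=0$, not because $\frac{\pa v_t}{\pa\nu}=0$.
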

\proof
  From the second equation in (\ref{0}) we directly find that
  \bea{E_1}
	\frac{1}{2} \frac{d}{dt} \io \vp^4 |\na v|^2
	&=& \io \vp^4 \na v\cdot\na v_t \nn \\
	&=& \io \vp^4 \na v\cdot\na (\Del v+u)
	- \io \vp^4 |\na v|^2 \nn \\
	&=& - \io \vp^4 |\Del v|^2
	- \io \vp^4 u\Del v
	- 4 \io \vp^3 \Del v \na v\cdot\na \vp
	- 4 \io \vp^3 u\na v\cdot\na\vp \nn \\
	& & - 4 \io \vp^4 |\na v|^2
	\qquad \mbox{for all } t\in (0,T).
  \eea 
  By using Young's inequality to estimate
  \bas
	- \io \vp^4 u\Del v
	\le \frac{1}{4} \io \vp^4 |\Del v|^2
	+ \io \vp^4 u^2
  \eas
  and 
  \bas
	- 4 \io \vp^3 \Del v \na v\cdot\na\vp
	\le \frac{1}{4} \io \vp^4 |\Del v|^2
	+ 16 \io \vp^2 |\na v|^2 |\na\vp|^2
  \eas
  as well as
  \bas
	- 4 \io \vp^3 u \na v\cdot\na\vp
	\le \io \vp^4 u^2 
	+ 4 \io \vp^2 |\na v|^2 |\na\vp|^2
  \eas
  for all $t\in (0,T)$,
  from (\ref{E_1}) we obtain after dropping a favorably signed summand that
  \bas
  \frac{d}{dt} \io \vp^4 |\na v|^2
	+ \io \vp^4 |\Del v|^2
	\le 4 \io \vp^4 u^2
	+ 40 \io \vp^2 |\na v|^2 |\na\vp|^2
	\qquad \mbox{for all } t\in (0,T).
  \eas
  Combining this estimate with the one from Lemma \ref{lem6} then completes the proof.
\qed
Some basic evolution properties of the expressions in (\ref{F}) can be obtained in a comparably simple manner:
\begin{lem}\label{lem7}
  Given $\vp\in C^1(\bom)$, let
  \be{F}
	\Fp(t)
	:=\frac{1}{2} \io \vp^4 |\na v(\cdot,t)|^2
	+ \frac{1}{2} \io \vp^4 v^2(\cdot,t)
	- \io \vp^4 u(\cdot,t) v(\cdot,t)
	+ \io \vp^4 u(\cdot,t) \big(\ln u(\cdot,t)-1\big)
  \ee
  for all $t \in [0,T)$.
  Then 
  \bea{7.1}
	\Fp'(t)
	+ \frac{1}{2} \io \vp^4 \Big| \frac{\na u}{\sqrt{u}}-\sqrt{u} \na v\Big|^2
	&\le& 16 \io \vp^2 u\ln^2 u |\na\vp|^2
	+ 16 \io \vp^2 uv^2 |\na\vp|^2 \nn\\
	& & + 4 \io \vp^2 |\na v|^2 |\na\vp|^2
  \eea
  for all $t \in (0,T)$.
\end{lem}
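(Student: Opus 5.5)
The plan is to differentiate the four summands of $\Fp$ separately and then regroup, so that the classical cancellation behind the global Lyapunov identity reappears inside a single square. The $\vp$-terms produced by integrations by parts are then estimated with Young's inequality.

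\emph{The entropy part.} For $\io \vp^4 u(\ln u-1)$ we use Lemma \ref{lem6}. Differentiating the remaining terms gives
\[
\frac{d}{dt}\Big\{\frac12\io\vp^4|\na v|^2+\frac12\io\vp^4 v^2-\io\vp^4 uv\Big\}
= \io \vp^4(-\Del v+v-u)v_t - 4\io\vp^3 v_t\na v\cdot\na\vp - \io\vp^4 u_t v .
\]
Here we integrate $\io\vp^4\na v\cdot\na v_t$ by parts; the boundary term vanishes because $\frac{\pa v}{\pa\nu}=0$. By the second equation in (\ref{0}), the first integral equals $-\io\vp^4 v_t^2$. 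Inserting the first equation into $-\io\vp^4 u_t v$ and integrating by parts yields
\[
\io\vp^4(\na u-u\na v)\cdot\na v+4\io\vp^3 v(\na u-u\na v)\cdot\na\vp .
\]

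\emph{Regrouping.} We add the identity from Lemma \ref{lem6}. The terms $\io\vp^4\na u\cdot\na v$, $-\io\vp^4\frac{|\na u|^2}{u}$ and $\io\vp^4(\na u-u\na v)\cdot\na v$ combine to $-\io\vp^4 u\,|\frac{\na u}{u}-\na v|^2=-\io\vp^4|\frac{\na u}{\sqrt u}-\sqrt u\na v|^2$. This leaves
\[
\Fp'+\io\vp^4 v_t^2+\io\vp^4\Big|\frac{\na u}{\sqrt u}-\sqrt u\na v\Big|^2
= -4\io\vp^3(\ln u - v)(\na u-u\na v)\cdot\na\vp - 4\io\vp^3 v_t\na v\cdot\na\vp .
\]

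\emph{Young's inequality.} We write $\na u-u\na v=\sqrt u\,(\frac{\na u}{\sqrt u}-\sqrt u\na v)$. The first term on the right is then bounded by
\[
\frac14\io\vp^4\Big|\frac{\na u}{\sqrt u}-\sqrt u\na v\Big|^2+16\io\vp^2u(\ln u-v)^2|\na\vp|^2 .
\]
Since $(\ln u - v)^2\le 2\ln^2u+2v^2$, this is at most the right-hand side of (\ref{7.1}) with constant $32$ instead of $16$. To reach $16$, we split the first term into its $\ln u$ part and its $v$ part and give each part weight $\frac18$ in Young's inequality; this produces exactly $16\io\vp^2u\ln^2u|\na\vp|^2+16\io\vp^2uv^2|\na\vp|^2$ and absorbs $\frac14$ of the dissipation. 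The last term is bounded by $\io\vp^4v_t^2+4\io\vp^2|\na v|^2|\na\vp|^2$ and absorbed by $\io\vp^4 v_t^2$.

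Altogether we retain at least $\frac12$ of the dissipation term, which gives (\ref{7.1}). The main care point is the bookkeeping of signs in the cross terms, so that the square $|\frac{\na u}{\sqrt u}-\sqrt u\na v|^2$ really emerges. Smoothness of $(u,v)$ on $(0,T)$ and $u>0$ justify all the manipulations.
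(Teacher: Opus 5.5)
Your route is the paper's own proof. You derive exactly the identity (\ref{7.3}), with the $\ln u$ and $v$ flux terms merged into a single factor $(\ln u - v)$. You then absorb $-4\io\vp^3 v_t\na v\cdot\na\vp$ into $\io\vp^4 v_t^2$, and the flux terms into the dissipation via Young's inequality.

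The one thing to fix is the arithmetic in your final Young step. Weight $\frac18$ gives $4ab\le\frac18 a^2+32b^2$, so it produces the constant $32$, not $16$. To get $16$, give each of the two parts weight $\frac14$, i.e. use $4ab\le\frac14a^2+16b^2$ as the paper does. This absorbs $\frac12$, not $\frac14$, of $\io\vp^4|\frac{\na u}{\sqrt u}-\sqrt u\na v|^2$, and leaves exactly the factor $\frac12$ in (\ref{7.1}).

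Alternatively, keep the merged term and use weight $\frac12$: the inequality $4ab\le\frac12a^2+8b^2$ together with $8(\ln u-v)^2\le16\ln^2u+16v^2$ gives the same constants.
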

\proof
  According to (\ref{0}),
  \bas
	\frac{1}{2} \frac{d}{dt} \io \vp^4 |\na v|^2
	= \io \vp^4 \na v\cdot\na v_t 
	= - \io \vp^4 \Del v \cdot v_t 
	- 4 \io \vp^3 v_t \na v\cdot\na\vp
  \eas
  and
  \bas
	\frac{1}{2} \frac{d}{dt} \io \vp^4 v^2 
	= \io \vp^4 vv_t
  \eas
  as well as
  \bas
	- \frac{d}{dt} \io \vp^4 uv
	&=& - \io \vp^4 uv_t
	- \io \vp^4 v\na\cdot (\na u-u\na v) \\
	&=& - \io \vp^4 uv_t
	+ \io \vp^4 \na u\cdot \na v	
	- \io \vp^4 u |\na v|^2 
	+ 4 \io \vp^3 v (\na u-u\na v) \cdot\na\vp
  \eas
  for all $t\in (0,T)$, whence using Lemma \ref{lem6} we see that
  \bas
	\Fp'(t)
	&=& - \io \vp^4 (\Del v - v + u) v_t
	- \io \vp^4 \Big\{ \frac{|\na u|^2}{u} - 2\na u\cdot\na v + u|\na v|^2 \Big\} \nn\\
	& & - 4 \io \vp^3 v_t \na v\cdot\na\vp
	+ 4\io \vp^3 v(\na u-u\na v)\cdot\na\vp \nn\\
	& & - 4 \io \vp^3 \ln u (\na u-u\na v) \cdot\na\vp
	\qquad \mbox{for all } t\in (0,T).
  \eas
  Since $\Del v - v + u = v_t$ and $\frac{|\na u|^2}{u} - 2\na u\cdot\na v + u|\na v|^2= \big| \frac{\na u}{\sqrt{u}}-\sqrt{u}\na v\big|^2$
  as well as $\na u-u\na v=\sqrt{u} \big\{ \frac{\na u}{\sqrt{u}}-\sqrt{u}\na v\big\}$ in $\Om\times (0,T)$, this means that
  \bea{7.3}
	\Fp'(t)
	+ \io \vp^4 v_t^2
	+ \io \vp^4 \Big| \frac{\na u}{\sqrt{u}}-\sqrt{u}\na v\Big|^2
	&=& - 4 \io \vp^3 v_t \na v\cdot\na\vp
	+ 4\io \vp^3 \sqrt{u} v \Big\{ \frac{\na u}{\sqrt{u}}-\sqrt{u}\na v\Big\} \cdot\na \vp \nn\\
	& & - 4 \io \vp^3 \sqrt{u}\ln u \Big\{ \frac{\na u}{\sqrt{u}}-\sqrt{u}\na v\Big\} \cdot\na\vp 
%	\qquad \mbox{for all } t\in (0,T).
  \eea
  for all $t\in (0,T)$.
  Here, Young's inequality ensures that
  \bas
	- 4 \io \vp^3 v_t \na v\cdot\na\vp
	\le \io \vp^4 v_t^2
	+ 4 \io \vp^2 |\na v|^2 |\na\vp|^2
	\qquad \mbox{for all } t\in (0,T),
  \eas
  that
  \bas	
	4 \io \vp^3 \sqrt{u} v\Big\{ \frac{\na u}{\sqrt{u}}-\sqrt{u}\na v\Big\} \cdot\na\vp
	\le \frac{1}{4} \io \vp^4 \Big|\frac{\na u}{\sqrt{u}}-\sqrt{u}\na v\Big|^2
	+ 16 \io \vp^2 uv^2 |\na\vp|^2
	\qquad \mbox{for all } t\in (0,T),
  \eas
  and that
  \bas
	-4 \io \vp^3 \sqrt{u} \ln u \Big\{ \frac{\na u}{\sqrt{u}}-\sqrt{u}\na v\Big\}\cdot\na\vp
	\le \frac{1}{4} \io \vp^4 \Big| \frac{\na u}{\sqrt{u}}-\sqrt{u}\na v\Big|^2
	+ 16 \io \vp^2 u\ln^2 u |\na\vp|^2
%	\qquad \mbox{for all } t\in (0,T),
  \eas
  for all $t\in (0,T)$, so that (\ref{7.1}) results from (\ref{7.3}).
\qed
\mysection{Time-independent local bounds for $u$ in $L\log L$. Proof of Theorem \ref{theo11}}
Throughout this section we shall suppose that $(u_0,v_0)$ satisfies (\ref{init}) and is such that in Proposition \ref{prop0} we have $T<\infty$.
In order to derive Theorem \ref{theo11} by means of a contradiction-based argument, we then assume that for some $x_0 \in \B$ there further exists a time $t_0 \in [0,T)$, radius $R>0$ and $\del\in (0,1)$ such that
\be{H}
	\int_{B_R(x_0)\cap\Om} u(\cdot,t) \ln \big( u(\cdot,t)+e\big) \le \del \cdot \ln \frac{T}{T-t}
	\qquad \mbox{for all } t\in (t_0,T),
	\tag{H}
\ee
where without loss of generality we may assume that $R<R_0$ with $R_0 > 0$ as in Lemma \ref{lem21}.
\subsection{A localized space-time integrability property of $\na v$}
Let us first make sure that assuming (\ref{H}) entails space-time integrability of the signal gradient in domains slightly smaller
than those in (\ref{H}).
We remark that, although we utilize the cut-off functions from Lemma \ref{lem21} here for convenience, we do not make use of the full strength of their construction as we do not rely on the 
boundary information property in (\ref{21.3}) here just yet.
\begin{lem}\label{lem3}
  Let $u_0$ and $v_0$ be such that (\ref{init}) holds and $T<\infty$. Let further $R_0$ and $\theta$ be as in Lemma \ref{lem21} and assume that (\ref{H}) be valid for some blow-up point $x\in\B$, time $t_0 \in [0,T)$, radius $R \in (0,R_0)$ and $\del\in (0,1)$. 
  Then there exists $C=C(x_0,t_0,R,u_0,v_0)>0$ such that
  \be{3.1}
	\int_{t_0}^t \int_{B_{\theta R}(x_0) \cap \Om} |\na v|^2 \le C
	\qquad \mbox{for all } t\in (t_0,T).
  \ee
\end{lem}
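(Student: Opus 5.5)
The plan is to test the second equation in (\ref{0}) against a localized multiple of $v$ and to exploit a simple but decisive feature of (\ref{H}): its right-hand side $\del\ln\frac{T}{T-t}$ is integrable over $(t_0,T)$, since $\int_{t_0}^T \ln\frac{T}{T-s}\,ds \le T(1+\ln T)<\infty$ (the usual $x\ln x$ bound after substituting $s\mapsto T-s$). So (\ref{H}) gives a finite \emph{space-time} bound $\int_{t_0}^T\int_{B_R(x_0)\cap\Om} u\ln(u+e)<\infty$. That is exactly the kind of quantity the Moser--Trudinger-type Lemma \ref{lem778} can convert into control of $\int uv$.

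\textbf{Step 1 (testing).} Let $\vp=\vp^{(x_0,R)}$ be as in Lemma \ref{lem21}, so that $\vp\equiv1$ on $B_{\theta R}(x_0)$, $\supp\vp\subset B_R(x_0)$ and $|\na\vp|\le \Lam/R$. Test the $v$-equation by $\vp^2 v$; no boundary condition on $\vp$ is needed because $\frac{\pa v}{\pa\nu}=0$. This gives
\[
\frac12\frac{d}{dt}\io\vp^2v^2+\io\vp^2|\na v|^2+\io\vp^2v^2=\io\vp^2uv-2\io\vp v\na v\cdot\na\vp .
\]
By Young's inequality, the last term is at most $\frac14\io\vp^2|\na v|^2+4\|\na\vp\|_{L^\infty(\Om)}^2\io v^2$. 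The $\io v^2$ factor is bounded uniformly in time by Lemma \ref{lem2}.

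\textbf{Step 2 (main obstacle: the term $\io\vp^2uv$).} I expect this to be the crucial step. I would apply Lemma \ref{lem778} with $\phi:=\vp u$ and $\psi:=\vp v$, both nonnegative, with $\phi\in C^0(\bom)$ and $\psi\in C^1(\bom)$. If $\vp u\equiv0$ at some time the term vanishes, so that case is trivial. Write $M:=\io\vp u\le m:=\io u_0$.

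The entropy term satisfies $\io\vp u\ln(\vp u)\le\int_{B_R(x_0)\cap\Om}u\ln(u+e)$, because $\vp\le1$ on its support. The remaining part $-M\ln\frac{M}{|\Om|}$ is bounded by $\sup_{0<x\le m}x\ln\frac{|\Om|}{x}<\infty$.

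The gradient term is handled as follows:
\[
\Gamma_1 aM\io|\na(\vp v)|^2\le 2\Gamma_1 am\io\vp^2|\na v|^2+2\Gamma_1am\|\na\vp\|_{L^\infty(\Om)}^2\io v^2 .
\]
The term $\Gamma_1 aM(\io\vp v)^2$ is bounded by Lemma \ref{lem2}. Choosing $a:=\frac{1}{8\Gamma_1 m}$ absorbs the gradient contribution into $\frac14\io\vp^2|\na v|^2$ on the left. What remains is
\[
\io\vp^2uv\le 8\Gamma_1m\int_{B_R(x_0)\cap\Om}u\ln(u+e)+c_1\qquad\mbox{for all } t\in(t_0,T).
\]

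\textbf{Step 3 (integration).} Combining Steps 1 and 2, for all $t\in(t_0,T)$ we have
\[
\frac{d}{dt}\io\vp^2v^2+\io\vp^2|\na v|^2\le c_2\int_{B_R(x_0)\cap\Om}u\ln(u+e)+c_3\le c_2\del\ln\frac{T}{T-t}+c_3 ,
\]
where the second inequality is (\ref{H}). Integrate over $(t_0,t)$ and drop the nonnegative term $\io\vp^2v^2(\cdot,t)$. The initial value $\io\vp^2v^2(\cdot,t_0)$ is finite by (\ref{reg}), and the time integral of the logarithm is finite as noted at the start. Since $\vp\equiv1$ on $B_{\theta R}(x_0)$, this yields (\ref{3.1}), with a constant that depends only on $x_0,t_0,R,u_0,v_0$ (through $\Om$, $T$ and $m$).
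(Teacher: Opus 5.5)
Your proposal is correct and follows the paper's proof essentially step by step. Like the paper, you test the $v$-equation with $\vp^2 v$ and apply Lemma~\ref{lem778} to $\phi=\vp u$, $\psi=\vp v$, taking $a$ small relative to $1/(\Gamma_1 m)$ to absorb the gradient term, then insert (H) and integrate in time. One harmless slip: $\int_{t_0}^T\ln\frac{T}{T-s}\,ds=(T-t_0)\bigl(1+\ln\frac{T}{T-t_0}\bigr)\le T$, so your stated bound $T(1+\ln T)$ is wrong for $T<1$ (take $t_0=0$), but the argument only needs this integral to be finite.
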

\proof
  We let $\vp=\vp^{(x_0,R)}$ and $\Lam$ be as in Lemma \ref{lem21}. Testing the second equation in (\ref{0}) with $\vp^2 v$ then yields that
  \bas 
    \frac{1}{2}\frac{d}{dt} \io \vp^2 v^2 & = & \io \vp^2 v \Delta v  -\io \vp^2 v^2 + \io \vp^2 u v \\
    & = & -\io \vp^2 |\na v|^2 - 2\io \vp v (\na \vp \cdot \na v) -\io \vp^2 v^2 + \io \vp^2 u v
  \eas
  and thus that
  \be{3.2}
	\frac{d}{dt} \io \vp^2 v^2 + \io \vp^2 |\na v|^2
	\le 4c_1^2 \io v^2 + 2 \io \vp^2 uv
	\qquad \mbox{for all } t\in (0,T)
  \ee
  with $c_1\equiv c_1(R):=\frac{\Lam}{R}$ after a straightforward application of Young's inequality as well as (\ref{21.4}) and dropping of a favorably signed summand.
  Here, letting $\Gamma_1=\Gamma_1(\Om)$ be as in Lemma \ref{lem778} and taking $a=a(u_0)>0$ small enough such that with 
  $m:=\io u_0$ we have $4\Gamma_1 am \le \frac{1}{2}$, in line with (\ref{778.1}), (\ref{21.4}), (\ref{H}) and the fact that $-\xi\ln\xi\le\frac{1}{e}$
  for all $\xi>0$ we can estimate
  \bas
	2 \io \vp^2 uv
	&=& 2 \io (\vp u)\cdot (\vp v) \\
	&\le& \frac{2}{a} \io \vp u\ln(\vp u)
	- \frac{2|\Om|}{a} \cdot \bigg\{ \frac{1}{|\Om|} \io \vp u \bigg\} \cdot \ln \bigg\{ \frac{1}{|\Om|} \io \vp u \bigg\} \\
	& & + 2 \Gamma_1 a \cdot \bigg\{ \io \vp u\bigg\} \cdot \io \big| \vp \na v + v\na\vp\big|^2 \\
	& & + 2 \Gamma_1 a \cdot \bigg\{ \io \vp u\bigg\} \cdot \bigg\{ \io \vp v\bigg\}^2 
	+ \frac{2\Gamma_1}{a} \io \vp u \\
  &\le& \frac{2}{a} \int_{\supp \vp} \vp u\ln (\vp u+e)
	+ \frac{2|\Om|}{ae}  \\
	& & + 4\Gamma_1 am \io \vp^2 |\na v|^2 
  + 4\Gamma_1 am \io v^2 |\na\vp|^2  
  + 2 \Gamma_1 am \cdot \bigg\{ \io \vp v \bigg\}^2 
	+ \frac{2\Gamma_1 m}{a}  \\
	&\le& \frac{2}{a} \ln \frac{T}{T-t} 
	+ \frac{2|\Om|}{ae}
	+ \frac{1}{2} \io \vp^2 |\na v|^2
	+ \frac{c_1^2}{2}\io v^2  
  + \frac{1}{4} \bigg\{ \io v \bigg\}^2 
	+ \frac{2\Gamma_1 m}{a}
  \eas
  for all $t \in (t_0,T)$ because $0\le\vp\le 1$, $\supp\vp\subset B_R(x_0)$ and $\del\in (0,1)$.
  Using that Lemma \ref{lem2} provides $c_2=c_2(u_0,v_0)>0$ such that $\io v^2 + \io v \le c_2$ for all $t\in (0,T)$,
  from (\ref{3.2}) we thus infer that
  \bas
	\frac{d}{dt} \io \vp^2 v^2
	+ \frac{1}{2} \io \vp^2 |\na v|^2
	\le \frac{2}{a} \ln\frac{T}{T-t} + c_3
	\qquad \mbox{for all } t\in (t_0,T)
  \eas
  with $c_3\equiv c_3(R,u_0,v_0):= \frac{2|\Om|}{ae} + \frac{c_1^2 c_2}{2} + \frac{c_2^2}{4}  + \frac{2\Gamma_1 m}{a}$.
  Since $\int_{t_0}^T \ln\frac{T}{T-t} dt<\infty$, time integration yields the claim.
\qed
\subsection{Mildly time-dependent localized bounds for $\int |\na v|^2 dx$ by tracing an augmented energy functional}
In this key section, we intend to turn Lemma \ref{lem3} into an estimate for spatial integrals of $|\na v|^2$ in yet smaller domains,
involving a logarithmic and hence mild dependence on time.
This will be achieved in Lemma \ref{lem8} by an analysis of the functional $\Fp$ augmented by the dissipative structure of $\Ep$,
and in order 
to foreshadow that Lyapunov-like properties of the localized energy functional from (\ref{F}) indeed will be expedient for
our purposes in this regard, let us first derive a lower bound for $\Fp$ which in the presence of the hypothesis in (\ref{H})
exhibits a conveniently controllable dependence on the time variable near the instant of blow-up.
\begin{lem}\label{lem79}
  There exists $\Gamma_5=\Gamma_5(u_0,v_0)>0$ such that for any $\vp\in C^1(\bom)$ fulfilling $0\le\vp\le 1$, the function $\Fp$ defined
  in (\ref{F}) satisfies
  \bea{79.1}
	\Fp(t) \ge - \Gamma_5 \int_{\supp\vp} u\ln (u+e) - \Gamma_5 \|\na\vp\|_{L^\infty(\Om)}^2 - \Gamma_5
	\qquad \mbox{for all } t\in (0,T).
  \eea
\end{lem}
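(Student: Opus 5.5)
The key point is that the only possibly negative term in $\Fp$ is $-\io \vp^4 uv$. I will estimate it by Lemma \ref{lem778}, applied with $\phi:=\vp^2 u$ and $\psi:=\vp^2 v$, so that $\io \vp^4 uv=\io \phi\psi$. If $\vp\equiv 0$ on $\supp\vp$ there is nothing to prove, so we may assume $\phi\not\equiv 0$, using $u>0$. We have $\io\phi\le m:=\io u_0$ by Lemma \ref{lem_mass}, and
\[
  \io|\na\psi|^2\le 2\io\vp^4|\na v|^2+8\|\na\vp\|_{L^\infty(\Om)}^2\io v^2 .
\]
Choosing $a=a(u_0)>0$ with $2\Gamma_1 a m\le\frac14$, the gradient contribution is at most $\frac14\io\vp^4|\na v|^2+2\|\na\vp\|_{L^\infty(\Om)}^2\io v^2$. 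By Lemma \ref{lem2}, $\io v^2\le c$, so this is bounded by $\frac14\io\vp^4|\na v|^2+c\|\na\vp\|_{L^\infty(\Om)}^2$. The term $\Gamma_1 a m(\io\vp^2 v)^2$ is bounded by a constant, again by Lemma \ref{lem2}, and $\frac{\Gamma_1}{a}\io\phi\le\frac{\Gamma_1 m}{a}$.

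It remains to treat the entropy part $\frac1a\io\phi\{\ln\phi-\ln(\frac1{|\Om|}\io\phi)\}$. Since $\vp\le1$, we have $\phi\ln\phi\le \vp^2u\ln(u+e)$ wherever $\phi\ge 1$ or so; more precisely, $\phi\ln\phi\le\vp^2 u\ln_+ u\le u\ln(u+e)$ pointwise on $\supp\vp$, and $\phi\ln\phi=0$ off $\supp\vp$. Moreover $-\io\phi\,\ln(\frac1{|\Om|}\io\phi)\le\frac{|\Om|}{e}$, by $-\xi\ln\xi\le\frac1e$. Hence this part is at most $\frac1a\int_{\supp\vp}u\ln(u+e)+\frac{|\Om|}{ae}$.

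Finally, the remaining terms of $\Fp$ are handled directly. The term $\frac12\io\vp^4 v^2$ is nonnegative and is dropped. For the entropy term, $u(\ln u-1)\ge -1$ pointwise, and on $\supp\vp$ also $u(\ln u-1)\ge -u\ln(u+e)-1$ trivially; so $\io\vp^4u(\ln u-1)\ge -|\Om|$. Collecting everything gives
\[
  \Fp\ge \frac14\io\vp^4|\na v|^2-\frac1a\int_{\supp\vp}u\ln(u+e)-c\|\na\vp\|_{L^\infty(\Om)}^2-c,
\]
which implies (\ref{79.1}) once the nonnegative gradient term is discarded. The constants depend only on $\Om$, on $m$, and on the bound from Lemma \ref{lem2}, hence on $(u_0,v_0)$.

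The only delicate step is bounding the entropy term of Lemma \ref{lem778} by the localized quantity $\int_{\supp\vp}u\ln(u+e)$. This relies on the sign of $\phi\ln\phi$ and on $\vp\le 1$, and it is here that the choice $\phi=\vp^2u$, rather than $\vp^4u$ paired with $\psi=v$, keeps the gradient of $\psi$ localized.
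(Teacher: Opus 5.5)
Your proposal is correct and follows the paper's proof essentially step for step. The paper also applies Lemma \ref{lem778} with $\phi=\vp^2u$ and $\psi=\vp^2v$, bounds the entropy part by $\frac1a\int_{\supp\vp}u\ln(u+e)+\frac{|\Om|}{ae}$, controls the remaining terms via (\ref{mass}) and Lemma \ref{lem2}, and uses $\io\vp^4u(\ln u-1)\ge-|\Om|$. The only cosmetic difference is the choice of $a$: the paper takes $2\Gamma_1ma\le\frac12$, so the gradient contribution exactly absorbs $\frac12\io\vp^4|\na v|^2$, whereas your smaller $a$ leaves a spare $\frac14\io\vp^4|\na v|^2$ (as in the introduction's heuristic), which you then discard.
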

\proof
  We take $\Gamma_1=\Gamma_1(\Om)$ from Lemma \ref{lem778}, and fix $a=a(u_0)>0$ small enough such that writing $m:=\io u_0$ we have
  $2\Gamma_1 ma \le \frac{1}{2}$.
  Then whenever $\vp\in C^1(\bom)$ satisfies $0\le\vp\le 1$, using (\ref{mass}) and the fact that
  \bas
	\io \big|\na(\vp^2 v)\big|^2
	= \io | \vp^2\na v + 2\vp v\na\vp|^2
	\le 2\io \vp^4 |\na v|^2
	+ 8 \io \vp^2 v^2 |\na\vp|^2
	\qquad \mbox{for all } t\in (0,T)
  \eas 
  we infer from (\ref{778.1}) by an argument similar to that in Lemma \ref{lem3} that
  \bea{79.2}
	\io \vp^4 uv
	&=& \io (\vp^2 u)\cdot (\vp^2 v) \nn\\
	&\le& \frac{1}{a} \io \vp^2 u \cdot \ln (\vp^2 u)
	+ \frac{|\Om|}{ae}
	+ \Gamma_1 a \cdot \bigg\{ \io \vp^2 u\bigg\} \cdot \io \big| \na (\vp^2 v)\big|^2 \nn\\
	& & + \Gamma_1 a \cdot \bigg\{ \io \vp^2 u\bigg\} \cdot \bigg\{ \io \vp^2 v\bigg\}^2
	+ \frac{\Gamma_1}{a} \cdot \io \vp^2 u \nn\\
	&\le& \frac{1}{a} \int_{\supp\vp} \vp^2 u\ln (\vp^2 u + e)
	+ \frac{|\Om|}{ae}
	+ \Gamma_1 a \cdot \bigg\{ \io u\bigg\} \cdot \bigg\{ 2 \io \vp^4 |\na v|^2 + 8 \io \vp^2 v^2 |\na\vp|^2 \bigg\} \nn\\
	& & + \Gamma_1 a \cdot \bigg\{ \io u \bigg\} \cdot \bigg\{ \io v\bigg\}^2 
	+ \frac{\Gamma_1}{a} \io u \nn\\
	&\le& \frac{1}{a} \int_{\supp\vp} u\ln (u+e)
	+ \frac{|\Om|}{ae}
	+ \frac{1}{2} \io \vp^4 |\na v|^2
	+ 2 \|\na\vp\|_{L^\infty(\Om)}^2 \io v^2 \nn\\
	& & + \frac{1}{4} \bigg\{ \io v\bigg\}^2
	+ \frac{\Gamma_1 m}{a}
	\qquad \mbox{for all } t\in (0,T),
  \eea
  because $|\vp|\le 1$.
  As Lemma \ref{lem2} yields $c_1=c_1(u_0,v_0)>0$ such that
  \bas
	\io v + \io v^2 \le c_1
	\qquad \mbox{for all } t\in (0,T),
  \eas
  estimating
  \bas
	\io \vp^4 u(\ln u - 1)
 	\ge -|\Om| \qquad \mbox{for all } t\in (0,T),
  \eas
  we obtain by inserting (\ref{79.2}) into (\ref{F}) that
  \bas
	\Fp(t) 
	&\ge& - \frac{1}{a} \int_{\supp\vp} u\ln (u+e)
	- \frac{|\Om|}{ae}
	- 2c_1 \|\na\vp\|_{L^\infty(\Om)}^2 \\
	& & - \frac{c_1^2}{4} - \frac{\Gamma_1 m}{a} - |\Om|
	\qquad \mbox{for all } t\in (0,T)
  \eas
  and thereby arrive at the claimed conclusion.
\qed
Now under the assumption in (\ref{H}),
the ill-signed contributions to (\ref{E_evol}) and (\ref{7.1}) can be estimated 
%by drawing on Lemma \ref{lem788}
whenever the number $\del$ in (\ref{H}) is suitably small. 
As detailed in Lemma \ref{lem8}, this will result from the following consequence of Lemma \ref{lem788},
which will independently be used for a second time in Lemma \ref{lem9} later on.
\begin{lem}\label{lem799}
  There exists $\del=\del(\Om)\in (0,1)$ such that whenever (\ref{init}) holds and $T<\infty$, then one can choose
  $\Gamma_6=\Gamma_6(u_0,v_0)>0$ in such a way that if (\ref{H}) is satisfied
  with some blow-up point $x_0\in\B$, time $t_0 \in [0,T)$ and radius $R>0$, then for any $\vp\in C^1(\bom)$ fulfilling $\supp\vp\subset\ov{B}_R(x_0)$ and $0\le\vp\le 1$,
  \be{799.1}
	7 \io \vp^4 u^2
	\le \frac{1}{2} \io \vp^4 \frac{|\na u|^2}{u}
	+ \Gamma_6 \big( \|\na\vp\|_{L^\infty(\Om)}^2 +1\big) \cdot (T-t)^{-\frac{1}{2}}
	\qquad \mbox{for all } t\in (t_0,T).
  \ee
\end{lem}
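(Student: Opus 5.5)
We plan to apply Lemma \ref{lem788} with $\psi:=u(\cdot,t)$, which is positive and belongs to $C^1(\bom)$ for each $t\in (0,T)$ by Proposition \ref{prop0}. We choose the parameter as $\eta:=\frac{1}{14}$, so that multiplying (\ref{788.1}) by $7$ produces exactly the term $\frac{1}{2}\io \vp^4 \frac{|\na u|^2}{u}$ on the right of (\ref{799.1}). We then have to control the two remaining terms. The first is $21|\Om| \exp\{14\Gamma_2 \int_{\supp\vp} u\ln(u+e)\}$. The second is $7\Gamma_2 (\|\na\vp\|_{L^\infty(\Om)}^2+1) m^2$, where $m=\io u_0$ by Lemma \ref{lem_mass}.

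For the exponential term we use that $\supp\vp\subset\ov{B}_R(x_0)$. Since $\pa B_R(x_0)$ is a null set, the integral over $\supp\vp$ is at most $\int_{B_R(x_0)\cap\Om} u\ln(u+e)$, which (\ref{H}) bounds by $\del\ln\frac{T}{T-t}$ for $t\in(t_0,T)$. Hence
\bas
	21|\Om|\exp\Big\{14\Gamma_2\int_{\supp\vp}u\ln(u+e)\Big\}
	\le 21|\Om| \Big(\frac{T}{T-t}\Big)^{14\Gamma_2\del}.
\eas
We now fix $\del=\del(\Om):=\min\{\frac12,\frac{1}{28\Gamma_2}\}$, which depends only on $\Om$ through $\Gamma_2(\Om)$. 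Since $\frac{T}{T-t}\ge 1$, the right-hand side is then at most $21|\Om|\,T^{1/2}(T-t)^{-1/2}$. This is the essential point of the argument: smallness of $\del$ turns an exponential of a logarithm into an integrable power. The exponent $\frac12$ is where the constant $\frac{1}{2\cdot 14\Gamma_2}$ comes from.

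For the mass term we use $(T-t)^{-1/2}\ge T^{-1/2}$ for $t\in(0,T)$. This gives
\bas
	7\Gamma_2 m^2\big(\|\na\vp\|_{L^\infty(\Om)}^2+1\big) \le 7\Gamma_2 m^2 T^{1/2}\big(\|\na\vp\|_{L^\infty(\Om)}^2+1\big)(T-t)^{-1/2}.
\eas
Similarly, the exponential term is bounded by $21|\Om|T^{1/2}(\|\na\vp\|_{L^\infty(\Om)}^2+1)(T-t)^{-1/2}$.

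Setting $\Gamma_6:=21|\Om|T^{1/2}+7\Gamma_2 m^2T^{1/2}$ then yields (\ref{799.1}). This constant depends only on $\Om$, $m$ and $T$, hence only on $(u_0,v_0)$, and it is independent of $x_0$, $t_0$, $R$ and $\vp$, as required. No step is a genuine obstacle. The only care needed is the order of choices: $\del$ must be fixed before $(u_0,v_0)$ is given, so it may depend on $\Om$ only, whereas $T$ and $m$ are allowed to enter $\Gamma_6$.
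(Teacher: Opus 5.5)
Your proposal is correct and follows the paper's proof essentially step by step. You apply Lemma \ref{lem788} with $\eta=\frac{1}{14}$, choose $\del$ so that $14\Gamma_2\del\le\frac12$, use (\ref{H}) to bound the exponential term by $21|\Om|T^{\frac12}(T-t)^{-\frac12}$, and absorb the mass term via $1\le T^{\frac12}(T-t)^{-\frac12}$, which gives exactly the paper's $\Gamma_6=21|\Om|T^{\frac12}+7\Gamma_2 m^2T^{\frac12}$.
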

\proof
  We fix any $\del=\del(\Om)\in (0,1)$ such that with $\Gamma_2$ as in Lemma \ref{lem788},
  \be{799.2}	
	14 \Gamma_2 \del \le \frac{1}{2},
  \ee
  and employ Lemma \ref{lem788} with $\eta:=\frac{1}{14}$ to see that due to (\ref{mass}),
  \bas
	7 \io \vp^4 u^2
	&\le& \frac{1}{2} \io \vp^4 \frac{|\na u|^2}{u}
	+ 21 |\Om| \exp \bigg\{ 14\Gamma_2 \int_{B_R(x_0)\cap\Om} u\ln (u+e)\bigg\}  \\
	& & + c_1 (\|\na\vp\|_{L^\infty(\Om)}^2 + 1)
	\qquad \mbox{for all } t\in (0,T),
  \eas
  where $c_1\equiv c_1(u_0,v_0):=7\Gamma_2 \cdot \big\{ \io u_0\big\}^2$.
  As (\ref{799.2}) warrants that thanks to (\ref{H}) we have
  \bas
	21 |\Om| \exp \bigg\{ 14\Gamma_2 \int_{B_R(x_0)\cap\Om} u\ln (u+e)\bigg\} 
	&\le& 21 |\Om| \exp \Big\{ \frac{1}{2} \ln \frac{T}{T-t}\Big\} \\
	&=& 21|\Om| T^\frac{1}{2} (T-t)^{-\frac{1}{2}}
	\qquad \mbox{for all } t\in (t_0,T),
  \eas
  this implies (\ref{799.1}) if we let
  $\Gamma_6\equiv \Gamma_6(u_0,v_0):=21|\Om| T^\frac{1}{2} + c_1 T^\frac{1}{2}$, because
  $c_1 \le c_1 T^\frac{1}{2} (T-t)^{-\frac{1}{2}}$.
\qed
We are now in the position to refine our knowledge on regularity of the signal gradient in the intended direction.
Still, the following use of the cut-off functions from Lemma \ref{lem21} does not yet rely on the boundary information property in (\ref{21.3}).
\begin{lem}\label{lem8}
  One can find $\del=\del(\Om)\in (0,1)$ with the following property: If $u_0$ and $v_0$ satisfy (\ref{init}) and are such that $T<\infty$,
  and if, for some blow-up point $x_0\in\B$, time $t_0 \in [0,T)$ and radius $R\in (0,R_0)$, the assumption (\ref{H}) holds, then it is possible to choose
  $C=C(x_0,t_0,R,u_0,v_0)>0$ in such a way that
  \be{8.1}
	\int_{B_{\theta^2 R}(x_0)\cap\Om} |\na v(\cdot,t)|^2
	\le C + C \cdot \ln \frac{T}{T-t}
	\qquad \mbox{for all } t\in (t_0,T),
  \ee
  where $R_0$ and $\theta$ are as in Lemma \ref{lem21}.
\end{lem}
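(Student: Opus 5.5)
We will work with the combined functional $\mathcal{G}(t):=\Fp(t)+\kappa\,\Ep(t)$, where $\kappa>0$ is a small constant and $\vp:=\vp^{(x_0,\theta R)}$ is the cut-off from Lemma \ref{lem21}. This $\vp$ satisfies $\vp\equiv 1$ in $B_{\theta^2R}(x_0)$ and $\supp\vp\subset B_{\theta R}(x_0)$. The aim is an inequality
\[
\mathcal{G}'(t)\le h(t)
\qquad\text{with}\qquad
\int_{t_0}^T h<\infty .
\]
Once this is available, Lemma \ref{lem79} gives $\Fp\ge \frac14\io\vp^4|\na v|^2-c\int_{\supp\vp}u\ln(u+e)-c$; more precisely we keep the $\frac12\io\vp^4|\na v|^2$ that was not absorbed in (\ref{79.2}). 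Since also $\Ep\ge\io\vp^4|\na v|^2-|\Om|$, hypothesis (\ref{H}) yields
\[
\io\vp^4|\na v|^2\le C+C\ln\tfrac{T}{T-t},
\]
which implies (\ref{8.1}). We use $\theta R<R$, so that (\ref{H}) applies on $\supp\vp$.

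The differential inequality comes from adding (\ref{7.1}) and $\kappa$ times (\ref{E_evol}). On the right-hand side of (\ref{7.1}) the terms $4\io\vp^2|\na v|^2|\na\vp|^2$ and $40\kappa\io\vp^2|\na v|^2|\na\vp|^2$ are bounded by $c\int_{B_{\theta R}\cap\Om}|\na v|^2$, which is time-integrable by Lemma \ref{lem3}. The term $16\io\vp^2uv^2|\na\vp|^2$ we treat as in Lemma \ref{lem3}, via Lemma \ref{lem778} with $\psi=v^2$ on a slightly larger cut-off; alternatively we use Young's inequality to write $uv^2\le u^2\vp^2+v^4\vp^{-2}|\na\vp|^2$ and control the $v^4$-integral by Lemma \ref{lem2}. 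Ideally the resulting $u^2$ term stays multiplied by $\vp^4$. If $\vp^2$ instead of $\vp^4$ obstructs this, we enlarge the exponent in the definition, or bound $\vp^2|\na\vp|^2\le c$ and apply Lemma \ref{lem799} to $\vp^{1/2}$. The term $16\io\vp^2u\ln^2u|\na\vp|^2$ is the troublesome one. We split it as
\[
u\ln^2u\le \epsilon u^2+c_\epsilon(u+1)
\qquad\text{for large }u
\]
and bound $\io\vp^2 u^2$ again through Lemma \ref{lem788} and Lemma \ref{lem799}. The needed dissipation $\io\vp^4\frac{|\na u|^2}{u}$ is provided by $\kappa\Ep$, not by $\Fp$. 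This is exactly the complementary dissipation mentioned in the introduction.

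In $\kappa\Ep'$, the term $4\kappa\io\vp^4u^2$ is handled by Lemma \ref{lem799}: it costs $\frac{\kappa}{2}\io\vp^4\frac{|\na u|^2}{u}$ plus $c(T-t)^{-1/2}$, which is integrable. The cross term $\kappa\io\vp^4\na u\cdot\na v$ we absorb by Young's inequality using $\frac{\kappa}{4}\io\vp^4\frac{|\na u|^2}{u}$ and $\kappa\io\vp^4u|\na v|^2$. The latter is bounded by
\[
\kappa\Big(\io\vp^4u^2\Big)^{1/2}\Big(\io\vp^4|\na v|^4\Big)^{1/2}.
\]
Here we invoke Lemma \ref{lem33} with $\psi=v$ and the Neumann condition (\ref{21.3}); this gives a product $\int_{\supp\vp}|\na v|^2\cdot\io\vp^4|\Del v|^2$ and a square term. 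Alternatively we keep part of $\Fp$'s dissipation $\frac12\io\vp^4|\frac{\na u}{\sqrt u}-\sqrt u\na v|^2$, which supplies $u|\na v|^2$ up to $\frac{|\na u|^2}{u}$. The term $-4\kappa\io\vp^3\ln u(\na u-u\na v)\cdot\na\vp$ is treated by Young's inequality. Its $\na u$-part is bounded by a small multiple of $\io\vp^4\frac{|\na u|^2}{u}$ plus $c\io\vp^2u\ln^2u|\na\vp|^2$, which we handle as above. Its $u\ln u\na v$-part is the critical one.

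The main obstacle is this term $\io\vp^3u\ln u\,\na v\cdot\na\vp$. Following the introduction, we plan to bound it by
\[
\epsilon\io\vp^4u^2\ln^2u+c\io\vp^2|\na v|^2|\na\vp|^2 .
\]
The first part is controlled by $\epsilon\io\vp^4|\na v|^4$-type quantities via Lemma \ref{lem33}. This produces $V(t)\io\vp^4|\Del v|^2$, which is absorbed into $\kappa\io\vp^4|\Del v|^2$ only if $V$ is bounded; otherwise we are left with $cV^2(t)$. If this circularity cannot be closed at this stage, we will settle for allowing a Gronwall-type term $c\,V(t)\,\mathcal{G}$. Since $\int V<\infty$ by Lemma \ref{lem3}, Gronwall still yields boundedness up to the $\ln\frac{T}{T-t}$ correction. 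This Gronwall fallback with the integrable coefficient $V(t)$ is what I would try first.
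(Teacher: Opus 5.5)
You have the right framework: a combination of $\Fp$ and $\Ep$ with $\vp=\vp^{(x_0,\theta R)}$, Lemma \ref{lem799} for the $\io\vp^4u^2$ terms, Lemmas \ref{lem2} and \ref{lem3} for time-integrability of the remainders, and Lemma \ref{lem79} with (\ref{H}) for the lower bound. The paper simply takes $\kappa=1$. But there is a genuine gap at the term you call critical, $-4\kappa\io\vp^3\ln u\,(\na u-u\na v)\cdot\na\vp$, and none of your treatments of its piece $\io\vp^3u\ln u\,\na v\cdot\na\vp$ closes.

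\begin{itemize}
\item The bound $\epsilon\io\vp^4u^2\ln^2u+c\io\vp^2|\na v|^2|\na\vp|^2$ leaves a supercritical quantity. Unlike $\io\vp^4u^2$ in Lemma \ref{lem788}, $\io\vp^4u^2\ln^2u$ is not bounded by a fixed multiple of $\io\vp^4\frac{|\na u|^2}{u}$ plus a function of $\int_{\supp\vp}u\ln(u+e)$.
\item The H\"older route through $\io\vp^4|\na v|^4$ and Lemma \ref{lem33} is exactly the argument of Lemma \ref{lem9}. It leaves $V(t)\io\vp^4|\Del v|^2$ and $V^2(t)$, with $V(t)=\int_{\supp\vp}|\na v|^2$, while Lemma \ref{lem3} gives only $V\in L^1((t_0,T))$. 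Lemma \ref{lem9} can use this route only because it takes (\ref{8.1}) as input, so here it is circular.
\item The Gronwall term $cV(t)\mathcal G(t)$ is not derived from any inequality. It could not be: $\mathcal G$ controls $\io\vp^4|\na v|^2$ but not $V$, and $\mathcal G$ has no sign.
\end{itemize}

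The missing idea is not to split this term at all. Since $\na u-u\na v=\sqrt u\,(\frac{\na u}{\sqrt u}-\sqrt u\na v)$,
\[
 -4\kappa\io\vp^3\ln u\,(\na u-u\na v)\cdot\na\vp
 \le \frac14\io\vp^4\Big|\frac{\na u}{\sqrt u}-\sqrt u\na v\Big|^2
 + 16\kappa^2\io\vp^2u\ln^2u\,|\na\vp|^2 .
\]
So the term is absorbed by the dissipation of $\Fp$. The remainder is of the same harmless type as the first term in (\ref{7.1}): it is bounded by $\epsilon\io\vp^4u^2+c_\epsilon\io u|\na\vp|^4+c$ via $\ln^4\xi\le\frac{256}{e^4}\xi$ for $\xi\ge1$. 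That Young step also settles your worry about $\vp^2$ versus $\vp^4$. This is the actual complementarity: $\Fp$'s dissipation absorbs $\Ep$'s flux error, while $\Ep$ supplies the Fisher information and $\io\vp^4|\Del v|^2$ that $\Fp$ lacks.

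The second gap is the cross term $\kappa\io\vp^4\na u\cdot\na v$.

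\begin{itemize}
\item Routing it through $\io\vp^4u|\na v|^2$ and Lemma \ref{lem33} is circular for the same reason. It produces $V(t)\io\vp^4u^2$ and $V^2(t)$. Lemma \ref{lem788} would then require $\eta\sim 1/V(t)$, which yields a factor $(\frac{T}{T-t})^{c\del V(t)}$ that need not be integrable.
\item Your alternative via $\Fp$'s dissipation also fails. Write $X=\frac{\na u}{\sqrt u}$ and $Y=\sqrt u\na v$. Even the full dissipation, after subtracting the cross term, is $\kappa|X|^2-\kappa X\cdot Y+|X-Y|^2=(\kappa+1)|X|^2-(\kappa+2)X\cdot Y+|Y|^2$. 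Its discriminant is $\kappa^2\ge 0$, so no positive multiple of $|X|^2$ is left over for Lemma \ref{lem799}.
\end{itemize}

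What is needed is the integration by parts
\[
 \io\vp^4\na u\cdot\na v=-\io\vp^4u\Del v-4\io\vp^3u\na v\cdot\na\vp .
\]
Young's inequality then absorbs the first part into the dissipation $\kappa\io\vp^4|\Del v|^2$ of $\kappa\Ep$. What remains is $c\io\vp^4u^2$, handled by Lemma \ref{lem799}, and $c\io\vp^2|\na v|^2|\na\vp|^2$, integrable by Lemma \ref{lem3}. With these two steps you get $\mathcal G'\le h$ with $h\in L^1((t_0,T))$, and the rest of your argument goes through.
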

\proof
  We take $\del=\del(\Om)>0$ as in Lemma \ref{lem799} and $\vp=\vp^{(x_0,\theta R)}$ as in Lemma \ref{lem21}. We then collect Lemma \ref{lem_E} and Lemma \ref{lem7} as well as the energy-type functionals $\Ep$ and $\Fp$ defined in said lemmas to see,
  \be{8.2}
	y(t):= \Ep(t)
	+ \Fp(t),
	\qquad t\in [0,T),
  \ee
  satisfies
  \bea{8.3}
	& & \hs{-20mm}
	y'(t)
	+ \io \vp^4 \frac{|\na u|^2}{u}
	+ \io \vp^4 |\Del v|^2
	+ \frac{1}{2} \io \vp^4 \Big| \frac{\na u}{\sqrt{u}}-\sqrt{u}\na v\Big|^2 \nn\\
	&\le& \io \vp^4 \na u\cdot\na v
	- 4\io \vp^3 \ln u (\na u-u\na v)\cdot\na\vp \nn\\
	& & + 4 \io \vp^4 u^2
	+ 40 \io \vp^2 |\na v|^2 |\na\vp|^2 \nn\\
	& & + 16 \io \vp^2 u\ln^2 u |\na\vp|^2
	+ 16 \io \vp^2 uv^2 |\na\vp|^2 \nn\\
	& & + 4 \io \vp^2 |\na v|^2 |\na\vp|^2
	\qquad \mbox{for all } t\in (0,T).
  \eea
  Here, an integration by parts shows that due to Young's inequality,
  \bas
	\io \vp^4 \na u\cdot\na v
	&=& - \io \vp^4 u\Del v
	- 4 \io \vp^3 u\na v\cdot\na\vp \\
	&\le& \io \vp^4 |\Del v|^2
	+ \frac{1}{4} \io \vp^4 u^2 \\
	& & + \frac{3}{4} \io \vp^4 u^2
	+ \frac{16}{3} \io \vp^2 |\na v|^2 |\na\vp|^2
	\qquad \mbox{for all } t\in (0,T),
  \eas
  while repeating an application of Young's inequality in the style of Lemma \ref{lem7} we find that
  \bas
	- 4 \io \vp^3 \ln u (\na u-u\na v) \cdot\na\vp
	\le \frac{1}{2} \io \vp^4 \Big| \frac{\na u}{\sqrt{u}}-\sqrt{u}\na v\Big|^2 
	+ 8 \io \vp^2 u \ln^2 u |\na\vp|^2
	\qquad \mbox{for all } t\in (0,T).
  \eas
  Consequently, (\ref{8.3}) implies that
  \bea{8.4}
	y'(t)
	+ \io \vp^4 \frac{|\na u|^2}{u}
	&\le& 5 \io \vp^4 u^2
	+ 24 \io \vp^2 u\ln^2 u |\na\vp|^2 \nn\\
	& & + 16 \io \vp^2 uv^2 |\na\vp|^2
	+ \frac{148}{3} \io \vp^2 |\na v|^2 |\na\vp|^2
	\qquad \mbox{for all } t\in (0,T),
  \eea
  and to further estimate the expressions on the right-hand side herein, we note that $\xi\ln^2 \xi\le \frac{4}{e^2}$ 
  for all $\xi\in (0,1]$, and that $\ln^4 \xi \le \frac{256}{e^4}\xi$ for all $\xi\ge1$;
  therefore, namely, Young's inequality guarantees that
  \bea{8.5}
	\hs{-6mm}
	24 \io \vp^2 u \ln^2 u |\na\vp|^2
	&\le& \frac{96}{e^2} \io \vp^2 |\na\vp|^2 
	+ 24 \int_{\{u>1\}} \vp^2 u\ln^2 u |\na\vp|^2 \nn\\
	&\le& \frac{96}{e^2} \io \vp^2 |\na\vp|^2 
	+ \io \vp^4 u^2
	+ 144 \int_{\{u>1\}} \ln^4 u |\na\vp|^4 \nn\\
	&\le& \frac{96}{e^2} \io \vp^2 |\na\vp|^2 
	+ \io \vp^4 u^2
	+ \frac{9\cdot 2^{12}}{e^4} \io u |\na\vp|^4
	\qquad \mbox{for all } t\in (0,T).
  \eea
  As furthermore, again by Young's inequality,
  \bas
	16 \io \vp^2 uv^2 |\na\vp|^2
	\le \io \vp^4 u^2
	+ 64 \io v^4 |\na\vp|^4
	\qquad \mbox{for all } t\in (0,T),
  \eas
  taking $\Lam$ from Lemma \ref{lem21} and abbreviating $c_1\equiv c_1(R):=\frac{\Lam}{\theta R}$ and
  $c_2\equiv c_2(R,u_0,v_0):=\frac{96 c_1^2 |\Om|}{e^2} + \frac{9\cdot 2^{12} c_1^4}{e^4} \io u_0$
  we infer from (\ref{8.4}), (\ref{8.5}), (\ref{21.4}) and (\ref{mass}) that
  \bea{8.6}
	y'(t)
	+ \io \vp^4 \frac{|\na u|^2}{u}
	&\le& 7 \io \vp^4 u^2
	+ 64 c_1^4 \io v^4 \nn\\
	& & + \frac{148 c_1^2}{3} \int_{B_{\theta R}(x_0)\cap\Om} |\na v|^2
	+ c_2
	\qquad \mbox{for all } t\in (0,T),
  \eea
  because $\supp\vp\subset B_{\theta R}(x_0)$ by Lemma \ref{lem21}.
  Now Lemma \ref{lem799} asserts that thanks to our choice of $\del$ we have
  \bas
	7 \io \vp^4 u^2
	\le \frac{1}{2} \io \vp^4 \frac{|\na u|^2}{u}
	+ c_3(T-t)^{-\frac{1}{2}}
	\qquad \mbox{for all } t\in (t_0,T),
  \eas
  where $c_3\equiv c_3(R,u_0,v_0):=\Gamma_6 \cdot (c_1^2+1)$ with $\Gamma_6$ as provided there.
  Accordingly, (\ref{8.6}) in particular entails that
  \bas
	y'(t) \le h(t):= 64 c_1^2 \io v^4 
	+ \frac{148 c_1^2}{3} \int_{B_{\theta R}(x_0)\cap\Om} |\na v|^2
	+ c_2 + c_3(T-t)^{-\frac{1}{2}}
	\qquad \mbox{for all } t\in (t_0,T),
  \eas
  so that since Lemma \ref{lem3} in conjunction with Lemma \ref{lem2} yields $c_4=c_4(x_0, t_0, R,u_0,v_0)>0$ such that
  \bas
	\int_{t_0}^t h(s) ds \le c_4
	\qquad \mbox{for all } t\in (t_0,T),
  \eas
  we conclude that
  \be{8.7}
	y(t) \le c_5\equiv c_5(x_0,t_0,R,u_0,v_0):= y(t_0)+c_4
	\qquad \mbox{for all } t\in (t_0,T).
  \ee
  It remains to note that in line with (\ref{8.2}) and again (\ref{mass}), taking $\Gamma_5=\Gamma_5(u_0,v_0)$ from Lemma \ref{lem79}
  we know from (\ref{H}) that
  \bas
	y(t) 
	&\ge& - |\Om| + \io \vp^4 |\na v|^2
	- \Gamma_5 \int_{B_R(x_0)\cap\Om} u\ln (u+e)
	- c_1^2 \Gamma_5 - \Gamma_5 \\
	&\ge& \int_{B_{\theta^2 R}(x_0) \cap \Omega} |\na v|^2
	- \Gamma_5 \del \ln\frac{T}{T-t}	- |\Om| - c_1^2 \Gamma_5 - \Gamma_5
	\qquad \mbox{for all } t\in (t_0,T)
  \eas
  as $\vp \equiv 1$ on $B_{\theta^2 R}(x_0)$ by construction.
  Indeed, (\ref{8.7}) therefore implies (\ref{8.1}).
\qed
\subsection{A time-independent local bound for $\int u\ln u dx$. Conclusion}
In a second core step of our analysis, we will now examine the functional $\Ep$ from (\ref{E}) on its own,
independent of its previous role
as a tool to control problematic terms occurring in the analysis of $\Fp$. 
In fact, we now use Lemma \ref{lem8} to suitably bound the possible growth of $\Ep$ during evolution.
Although the functionals studied here essentially deviate from those studied in the previous section only through the absence of the expressions contributed by
$\Fp$ from (\ref{F}), our method of handling associated localization errors will be markedly different in the argument
that leads to the following $L\log L$ bound.
\begin{lem}\label{lem9}
  Let $\del=\del(\Om)$ be as in Lemma \ref{lem799}.
  Then if (\ref{init}) holds and $T<\infty$, and if (\ref{H}) is satisfied with some blow-up point $x_0 \in \B$, time $t_0 \in [0,T)$ and radius $R\in (0,R_0)$,
  one can find 
  $C(x_0, t_0, R,u_0,v_0)>0$ such that
  with $R_0$ and $\theta$ taken from Lemma \ref{lem21}, we have
  \be{9.1}
	\int_{B_{\theta^3 R}(x_0)\cap\Om} u(\cdot,t) \ln u(\cdot,t) 
	\le C
	\qquad \mbox{for all } t\in (0,T).
  \ee
\end{lem}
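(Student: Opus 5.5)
We will work with $\Ep$ from (\ref{E}) alone, now choosing $\vp=\vp^{(x_0,\theta^2 R)}$ from Lemma \ref{lem21}. Then $\supp\vp\subset B_{\theta^2 R}(x_0)$, $\vp\equiv 1$ on $B_{\theta^3R}(x_0)$, $\frac{\pa\vp}{\pa\nu}=0$ on $\pO$, and $|\na\vp|+|D^2\vp|\le c$ with $c=c(R)$. Abbreviate
\[
V(t):=1+\int_{B_{\theta^2R}(x_0)\cap\Om}|\na v|^2 .
\]
Lemma \ref{lem8} gives $V(t)\le C+C\ln\frac{T}{T-t}$ on $(t_0,T)$. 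In particular $V^2$ is integrable over $(t_0,T)$. Since $\Ep\ge -|\Om|$ and $\Ep$ controls $\io\vp^4u\ln u$, a time-independent bound $\Ep(t)\le C$ on $(t_0,T)$ yields (\ref{9.1}). On $(0,t_0]$ the bound is immediate by continuity, because $t_0<T$.

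The first step starts from (\ref{E_evol}). Integrating by parts in $\io\vp^4\na u\cdot\na v$ turns it into $-\io\vp^4u\Del v$ plus a term $\io\vp^3u\na v\cdot\na\vp$. Young's inequality then gives $\frac12\io\vp^4|\Del v|^2$, multiples of $\io\vp^4u^2$, and $c\int_{\supp\vp}|\na v|^2\le cV$. The term $-4\io\vp^3\ln u\,\na u\cdot\na\vp$ splits into $\frac14\io\vp^4\frac{|\na u|^2}{u}$ and $c\io\vp^2u\ln^2u|\na\vp|^2$. The latter is handled as in (\ref{8.5}): it is bounded by $\io\vp^4u^2$ plus a constant. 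All the resulting $\io\vp^4u^2$ contributions, at most $7\io\vp^4u^2$, are absorbed by Lemma \ref{lem799}. This leaves $\frac12\io\vp^4\frac{|\na u|^2}{u}$ on the left and a forcing term $c(T-t)^{-1/2}$, which is integrable.

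The main obstacle is the remaining term $4\io\vp^3u\ln u\,\na v\cdot\na\vp$, which the energy structure alone does not control. I would estimate it, using the $u\ln^2u$ trick as above, by
\[
\io\vp^4u^2+c\io\vp^2\ln^2(u+e)\,|\na v|^2|\na\vp|^2 .
\]
To the second integral I apply Young's inequality once more, bounding it by $\io\vp^4 u^2$-type quantities plus $c\int_{\supp\vp}\tilde\vp^4|\na v|^4$, where $\tilde\vp$ denotes a slightly larger cut-off. A cleaner alternative, which I would try first, is simply
\[
|u\ln u\,\na v|\le \vp\,u^{2}\cdot(\text{small})+c\,\vp^{-1}(\ln^2 u)\,|\na v|^2 ,
\]
with the logarithm then absorbed via $\ln^4\xi\le c\xi$. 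Either way I end up needing $\io\vp^4|\na v|^4$. Lemma \ref{lem33}, applicable since $\frac{\pa\vp}{\pa\nu}=\frac{\pa v}{\pa\nu}=0$, bounds this by
\[
\Gamma_4 V\io\vp^4|\Del v|^2+cV^2 .
\]
Since $V$ may grow like $\ln\frac{T}{T-t}$, the coefficient in front of $\io\vp^4|\Del v|^2$ cannot simply be absorbed. Instead I would make the Young splitting carry the weight $1/V$: the $|\na v|^4$ part receives coefficient $\frac{1}{4\Gamma_4V(t)}$, and the complementary $u^2$ part correspondingly receives coefficient $cV$. This gives $\frac14\io\vp^4|\Del v|^2+cV$ plus a term of the form $cV\io\vp^4 u^2$.

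To absorb $cV\io\vp^4u^2$, Lemma \ref{lem788} is applied with $\eta\sim 1/V$, where $\eta$ is the parameter in Lemma \ref{lem788} that fixes the $\io\vp^4\frac{|\na u|^2}{u}$ coefficient. The resulting remainder is $\exp\{cV\del\ln\frac{T}{T-t}\}$, which is too large. The cheaper route is to weight $\ln u$ so that only $\io\vp^4u^2$ with constant coefficient appears, and to let the full $V$-dependence sit on the $|\na v|^4$ side. Concretely, $\io\vp^3u\ln u|\na v||\na\vp|\le \io\vp^4u^2+c\io\vp^2\ln^2(u+e)|\na v|^2$, and then $\ln^2(u+e)\le \epsilon u+c_\epsilon$ would be used, reducing matters to $\io\vp^2 u|\na v|^2$. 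That term is bounded by Cauchy–Schwarz by $(\io\vp^4u^2)^{1/2}(\io|\na v|^4\cdot)^{1/2}$, which I will then split with the $V$-weighted Young inequality described above, so that Lemma \ref{lem33} produces $\frac14\io\vp^4|\Del v|^2+cV^2$. The end result is
\[
\Ep'\le c(T-t)^{-1/2}+cV^2+c ,
\]
and integrability of $V^2$ from Lemma \ref{lem8} closes the argument. I expect making this bookkeeping consistent, in particular the choice of cut-off radii ($\theta^2R$ versus $\theta^3R$) and the absorption of all $u^2$ terms using Lemma \ref{lem799} with a fixed constant, to be the delicate step.
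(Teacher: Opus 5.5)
Your setup matches the paper's: the cut-off $\vp^{(x_0,\theta^2R)}$, $\Ep$ used alone, Lemma~\ref{lem8} for $V$, Lemma~\ref{lem33} for $\io\vp^4|\na v|^4$, and absorption of $7\io\vp^4u^2$ via Lemma~\ref{lem799}. Everything up to the critical term $4\io\vp^3u\ln u\,\na v\cdot\na\vp$ is fine; on $\{u<1\}$ you should use $u|\ln u|\le\frac1e$, since $\ln^2u\le\ln^2(u+e)$ fails there. The gap is in the route you finally commit to for that term.

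After your first Young step has spent $\io\vp^4u^2$, you are left with $c\io\vp^2\ln^2(u+e)|\na v|^2|\na\vp|^2$. Trading $\ln^2(u+e)$ for $\epsilon u+c_\epsilon$ reintroduces $u$ carrying only the weight $\vp^2$. Cauchy--Schwarz then cannot give $\io\vp^4u^2$ and $\io\vp^4|\na v|^4$ simultaneously: that pairing controls only $\io\vp^4u|\na v|^2$, which is smaller than $\io\vp^2u|\na v|^2$. So you are pushed to $\int_{\supp\vp}|\na v|^4$, i.e.\ a larger cut-off $\tilde\vp\equiv1$ on $\supp\vp$. But Lemma~\ref{lem33} for $\tilde\vp$ produces $\io\tilde\vp^4|\Del v|^2\ge\io\vp^4|\Del v|^2$, which the dissipation of $\Ep$ (built on $\vp$) cannot absorb. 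In addition, with $\epsilon$ fixed, your $V$-weighted Young split puts a coefficient growing like $V$ in front of $\io\vp^4u^2$. That is exactly the term you correctly showed cannot be absorbed. So the final inequality $\Ep'\le c(T-t)^{-1/2}+cV^2+c$ is asserted, not derived.

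The missing observation is that the $V$-weight must be moved onto a factor controlled only through the mass. This requires keeping the logarithm rather than converting it into $u$. Your own ``cleaner alternative'' already does this. You discarded it only because you misidentified its complementary Young term as $cV\io\vp^4u^2$. In fact
\[
4\vp^2\ln^2(u+e)|\na v|^2|\na\vp|^2\le\frac{1}{4\Gamma_4V}\vp^4|\na v|^4+16\Gamma_4V\ln^4(u+e)|\na\vp|^4 .
\]
Since $\ln^4(u+e)\le\frac{256}{e^4}(u+e)$, the last term integrates to at most $cV$. Lemma~\ref{lem33} turns the first term into $\frac14\io\vp^4|\Del v|^2+cV$. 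Both contributions are integrable on $(t_0,T)$ by Lemma~\ref{lem8}.

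The paper implements the same idea via H\"older with exponents $4$ and $\frac43$. It bounds $\io\vp^4|\na v|^4\le c\,\ell^2(t)\big(1+\io\vp^4|\Del v|^2\big)$ with $\ell=1+\ln\frac{T}{T-t}$ and absorbs the $|\Del v|^2$ part by Young. On $\{u\ge1\}$ it then writes
\[
c\,\ell^{2/3}\vp^{8/3}u^{4/3}|\ln u|^{4/3}=c\,(\vp^4u^2)^{2/3}\big(\ell^{1/2}|\ln u|\big)^{4/3}\le\vp^4u^2+c'\ell^2\ln^4u .
\]
Thus $\io\vp^4u^2$ keeps a fixed coefficient, and all time growth lands on $c\,\ell^2\io u$.
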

\proof
  With $\Lam$ as in Lemma \ref{lem21}, we write $c_1\equiv c_1(R):=\frac{\Lam}{\theta^2 R}$ and 
  $c_2\equiv c_2(R):=(\frac{\Lam}{\theta^4 R^2})^2 + c_1^4 + 1$, and then see that the function $\vp:=\vp^{(x_0,\theta^2 R)}$
  obtained there satisfies
  \be{9.2}
	|\na\vp| \le c_1
	\quad \mbox{and} \quad
	|D^2 \vp|^2 + |\na\vp|^4 + 1 \le c_2
	\qquad \mbox{in } \Om.
  \ee
  We moreover recall Lemma \ref{lem2} to fix $c_3=c_3(u_0,v_0)>0$ such that
  \be{9.3}
	\io v^2 \le c_3
	\qquad \mbox{for all } t\in (0,T),
  \ee
  and invoke Lemma \ref{lem8} to infer on the basis of our selection of $\del$ that with some $c_4=c_4(x_0,t_0,R,u_0,v_0)>0$ we have
  \be{9.4}
	\int_{B_{\theta^2 R}(x_0)\cap\Om} |\na v|^2
	\le c_4 \ell(t)
	\qquad \mbox{for all } t\in (t_0,T),
  \ee  
  where we have abbreviated
  \be{9.5}
	\ell(t):=1+\ln\frac{T}{T-t},
	\qquad t\in (0,T).
  \ee
  As already seen in Lemma \ref{lem_E}, we know that the functional $\Ep$ satisfies
  \bas
	& & \hs{-20mm}
	\Ep'(t) + \io \vp^4 \frac{|\na u|^2}{u}
	+ \io \vp^4 |\Del v|^2 \\
	&\le& \io \vp^4 \na u\cdot\na v
	- 4 \io \vp^3 \ln u (\na u- u\na v)\cdot\na\vp \\
	& & + 4\io \vp^4 u^2
	+ 40 \io \vp^2 |\na v|^2 |\na \vp|^2
	\qquad \mbox{for all } t\in (0,T),
  \eas
  where we again integrate by parts and use Young's inequality to estimate
  \bas
	\io \vp^4 \na u\cdot\na v
	&=& - \io \vp^4 u\Del v
	- 4\io \vp^3 u\na v\cdot\na\vp \\
	&\le& \frac{1}{2} \io \vp^4 |\Del v|^2
	+ \frac{1}{2} \io \vp^4 u^2 \\
	& & + \frac{1}{2} \io \vp^4 u^2
	+ 8 \io \vp^2 |\na v|^2 |\na\vp|^2
	\qquad \mbox{for all } t\in (0,T).
  \eas
  Thus, thanks to (\ref{9.2}) and (\ref{9.4}),
  \bea{9.7}
	& & \hs{-20mm}
	\Ep'(t) + \io \vp^4 \frac{|\na u|^2}{u}
	+ \frac{1}{2} \io \vp^4 |\Del v|^2 \nn\\
	&\le& 5 \io \vp^4 u^2
	+ 48 c_1^2 c_4 \ell(t) \nn\\
	& & - 4 \io \vp^3 \ln u (\na u-u\na v)\cdot\na \vp
	\qquad \mbox{for all } t\in (t_0,T).
  \eea
  In contrast to our approach in Lemma \ref{lem8}, however, we now split the rightmost integral herein to see that, by (\ref{9.2}),
  \bea{9.8}
	- 4 \io \vp^3 \ln u (\na u-u\na v)\cdot\na \vp
	&\le& 4c_1 \io \vp^3 |\ln u| \cdot |\na u| \nn\\
	& & + 4c_1 \io \vp^3 u|\ln u| \cdot |\na v|
	\qquad \mbox{for all } t\in (0,T),
  \eea
  where by Young's inequality,
  \be{9.9}
	4c_1 \io \vp^3 |\ln u| \cdot |\na u|
	\le \frac{1}{2} \io \vp^4 \frac{|\na u|^2}{u}
	+ 8c_1^2 \io \vp^2 u\ln^2 u
	\qquad \mbox{for all } t\in (0,T).
  \ee
  Here, using that $\ln^4 \xi \le \frac{256}{e^4} \xi$ for all $\xi\ge 1$ and $\xi\ln^2 \xi \le \frac{4}{e^2}$ for all $\xi\in (0,1)$,
  and writing $m:=\io u_0$,
  we see that due to Young' inequality and (\ref{mass}),
  \bas
	8 c_1^2 \int_{\{u\ge 1\}} \vp^2 u\ln^2 u
	&\le& \io \vp^4 u^2
	+ 16 c_1^4 \int_{\{u\ge 1\}} \ln^4 u \\
	&\le& \io \vp^4 u^2
	+ 16 c_1^4 \cdot \frac{256}{e^4} \io u \\
	&\le& \io \vp^4 u^2
	+ \frac{2^{12} c_1^4 m}{e^4}
	\qquad \mbox{for all } t\in (0,T),
  \eas
  and that
  \bas
	8 c_1^2 \int_{\{u<1\}} \vp^2 u\ln^2 u
	\le \frac{32c_1^2 |\Om|}{e^2}
	\qquad \mbox{for all } t\in (0,T),
  \eas
  whence (\ref{9.9}) entails that
  \be{9.10}
	4c_1 \io \vp^3 |\ln u|\cdot |\na u|
	\le \frac{1}{2} \io \vp^4 \frac{|\na u|^2}{u}
	+ \io \vp^4 u^2
	+ c_5
	\qquad \mbox{for all } t\in (0,T)
  \ee
  with $c_5\equiv c_5(R,u_0,v_0):=\frac{2^{12} c_1^4 m}{e^4} + \frac{32 c_1^2 |\Om|}{e^2}$.\abs
  Next, the crucial last summand in (\ref{9.8}) can be controlled by using the H\"older inequality according to
  \bea{9.11}
	\hs{-10mm}
	4c_1 \io \vp^3 u |\ln u| \cdot |\na v|
	&=& 4c_1 \io \big( \vp^4 |\na v|^4\big)^\frac{1}{4} \cdot \vp^2 u|\ln u| \nn\\
	&\le& 4c_1 \cdot\bigg\{ \io \vp^4 |\na v|^4 \bigg\}^\frac{1}{4} \cdot 
		\bigg\{ \io \vp^\frac{8}{3} u^\frac{4}{3} |\ln u|^\frac{4}{3} \bigg\}^\frac{3}{4}
	\qquad \mbox{for all } t\in (0,T),
  \eea
  where by Lemma \ref{lem33}, (\ref{9.2}),  (\ref{9.3}), (\ref{9.4}) and the fact that $\ell\ge 1$,
  taking $\Gamma_4=\Gamma_4(\Om)$ as introduced there we have
  \bas
	\io \vp^4 |\na v|^4
	&\le& \Gamma_4 \cdot c_4 \ell(t) \io \vp^4 |\Del v|^2
	+ \Gamma_4 \cdot c_2 \cdot \big\{ c_4^2\ell^2(t) + c_3^2 \big\} \\
	&\le& c_6 \ell^2(t) \cdot \bigg\{ 1 + \io \vp^4 |\Del v|^2 \bigg\}
	\qquad \mbox{for all } t\in (t_0,T)
  \eas
  with $c_6\equiv c_6(x_0,t_0,R,u_0,v_0):=\Gamma_4 c_4 + \Gamma_4 c_2(c_4^2+c_3^2)$.
  Therefore, (\ref{9.11}) implies that, again by Young's inequality,
  \bea{9.12}
	4c_1 \io \vp^3 u|\ln u| \cdot |\na v|
	&\le& 2^\frac{1}{4} \cdot 4  c_1 c_6^\frac{1}{4} \ell^\frac{1}{2}(t) \cdot \Bigg\{
	\frac{1}{2} \cdot \bigg\{ 1 + \io \vp^4 |\Del v|^2 \bigg\} \Bigg\}^\frac{1}{4}
	\cdot \bigg\{ \io \vp^\frac{8}{3} u^\frac{4}{3} |\ln u|^\frac{4}{3} \bigg\}^\frac{3}{4} \nn\\
	&\le& \frac{1}{2} \cdot \bigg\{ 1 + \io \vp^4 |\Del v|^2 \bigg\} \nn\\
	& & + c_7 \ell^\frac{2}{3}(t) \io \vp^\frac{8}{3} u^\frac{4}{3} |\ln u|^\frac{4}{3} 
	\qquad \mbox{for all } t\in (t_0,T),
  \eea
  where $c_7\equiv c_7(x_0, t_0,R,u_0,v_0):=(2^\frac{1}{4} \cdot 4c_1 c_6^\frac{1}{4})^\frac{4}{3}$.
  As $\ln^4 \xi \le \frac{256}{e^4} \xi$ for all $\xi\ge 1$ and $\xi|\ln\xi| \le \frac{1}{e}$ for all $\xi\in (0,1)$, by means of Young's inequality and (\ref{mass}) we can here again estimate
  \bas
	c_7 \ell^\frac{2}{3}(t) \int_{\{u\ge 1\}} \vp^\frac{8}{3} u^\frac{4}{3} |\ln u|^\frac{4}{3}
	&=&  c_7\int_{\{u\ge 1\}} \big( \vp^4 u^2 \big)^\frac{2}{3} \cdot (\ell^\frac{1}{2}(t)|\ln u|)^\frac{4}{3} \\
	&\le& \io  \vp^4 u^2 + \frac{256 c_7^3}{e^4} \ell^2(t) \io u\\
  &=&\io  \vp^4 u^2 + c_8 \ell^2(t)
  \eas
  with $c_8 = c_8(x_0, t_0, R, u_0, v_0) :=  \frac{256 c_7^3 m}{e^4}$ and
  \bas
	c_7 \ell^\frac{2}{3}(t) \cdot \int_{\{u<1\}} \vp^\frac{8}{3} u^\frac{4}{3} |\ln u|^\frac{4}{3}
	\le c_7 e^{-\frac{4}{3}} |\Om| \ell^\frac{2}{3}(t)
  \eas
  for all $t\in (t_0,T)$.
  Together with (\ref{9.12}) and (\ref{9.10}) inserted into (\ref{9.8}), this shows that (\ref{9.7}) implies the inequality
  \be{9.13}
	\Ep'(t) + \frac{1}{2} \io \vp^4 \frac{|\na u|^2}{u}
	\le 7 \io \vp^4 u^2
	+ c_{9} \ell^2(t)
	\qquad \mbox{for all } t\in (t_0,T)
  \ee
  with $c_{9}\equiv c_{9}(x_0,R,u_0,v_0):=48c_1^2c_4 +c_5+ \frac{1}{2}+  c_7 e^{-\frac{4}{3}} |\Om| + c_8$.
  Now Lemma \ref{lem799} along with (\ref{9.2}) implies that if we take $\Gamma_6=\Gamma_6(u_0,v_0)$ as selected there, then
  \bas
	7\io \vp^4 u^2
	\le \frac{1}{2} \io \vp^4 \frac{|\na u|^2}{u}
	+ \Gamma_6 \cdot (c_1^2+1) (T-t)^{-\frac{1}{2}}
	\qquad \mbox{for all } t\in (t_0,T).
  \eas
  From (\ref{9.13}) we therefore obtain that
  \bas
	\Ep(t) \le \Ep(t_0) + c_{9} \int_{t_0}^t \ell^2(s) ds
	+ \Gamma_6 \cdot (c_1^2+1) \int_{t_0}^t (T-s)^{-\frac{1}{2}} ds
	\qquad \mbox{for all } t\in (t_0,T),
  \eas
  so that the claim follows upon observing $\ell^2\in L^1((0,T))$ by (\ref{9.5}), and that
  \bas
	\Ep(t) \ge \io \vp^4 u(\ln u-1)
	\ge \int_{B_{\theta^3 R}(x_0)\cap\Om} u\ln u - \frac{|\Omega|}{e}
	- m
	\qquad \mbox{for all } t\in (0,T)
  \eas
  according to Lemma \ref{lem21}, (\ref{mass}) and the fact that 
  $\xi\ln\xi\ge - \frac{1}{e}$ for all $\xi \ge 0$.
\qed
In concluding Theorem \ref{theo11} from the above, we will make use of the following characterization of blow-up points
that was achieved already in \cite[Proposition 4.2]{NSS2000}.
\begin{prop}\label{prop10}
  Assume (\ref{init}), and suppose that in Proposition \ref{prop0} we have $T<\infty$. Then whenever $x_0\in\bom$ is such that
  \be{10.1}
	\sup_{t\in (0,T)} \int_{B_{r_0}(x_0)\cap\Om} u(\cdot,t) \ln u(\cdot,t) <\infty
  \ee
  for some $r_0>0$, we have
  \bas
	\sup_{t\in (0,T)} \|u(\cdot,t)\|_{L^\infty(B_r(x_0)\cap \Om)} < \infty
	\qquad \mbox{for all } r\in (0,r_0);
  \eas
  in particular, it then follows that $x_0\not\in\B$.
\end{prop}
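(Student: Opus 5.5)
The plan is to run the localization machinery of Lemmas \ref{lem3}, \ref{lem8} and \ref{lem9} again, now with the logarithmically growing right-hand side of (\ref{H}) replaced by a time-independent bound, and then to bootstrap from $L\log L$ to $L^\infty$ through finitely many shrinking balls. Fix $r\in(0,r_0)$; without loss of generality $r_0<R_0$. On any interval $[0,t_1]$ with $t_1<T$ all quantities are bounded by (\ref{reg}), so only $t$ near $T$ matters.

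\textbf{Step 1: local $L\log L$ bound and local bounds for $\na v$.} Since $\xi\ln\xi\ge-\frac1e$ and $\xi\ln(\xi+e)\le \xi\ln\xi+c(\xi+1)$ for $\xi>0$, (\ref{10.1}) and (\ref{mass}) give
\[
  \sup_{t\in(0,T)}\int_{B_{r_0}(x_0)\cap\Om}u\ln(u+e)=:K<\infty .
\]
First, I repeat the proof of Lemma \ref{lem3} with $\frac2a\ln\frac{T}{T-t}$ replaced by $\frac{2K}{a}$. This yields $\int_0^T\int_{B_{\theta r_0}(x_0)\cap\Om}|\na v|^2<\infty$. Next, I repeat Lemma \ref{lem8} for $y=\Ep+\Fp$. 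The important difference is in Lemma \ref{lem788}: for any $\eta>0$ the exponential term is now bounded by $3|\Om|e^{\Gamma_2K/\eta}$, so no smallness of $\del$ is needed. This makes $y'\le h$ with $h\in L^1$, hence $y$ is bounded above. Lemma \ref{lem79} bounds $y$ from below by $\int\vp^4|\na v|^2-C$, so
\[
  \sup_{t}\int_{B_{\theta^2r_0}(x_0)\cap\Om}|\na v|^2<\infty .
\]
Then the computation of Lemma \ref{lem9}, with $\ell\equiv1$ and using Lemma \ref{lem33}, gives $\Ep'+\frac14\io\vp^4\frac{|\na u|^2}{u}+\frac14\io\vp^4|\Del v|^2\le C$. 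Integrating, with $\vp$ now localized to $B_{\theta^3r_0}(x_0)$, I obtain a bound for $\int_0^T\io\vp^4(\frac{|\na u|^2}{u}+|\Del v|^2)$, and by Lemma \ref{lem788} also for $\int_0^T\io\vp^4u^2$.

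\textbf{Step 2: local $L^p$ bounds.} For $p\ge2$ and a cut-off $\vp$ from Lemma \ref{lem21} on a smaller ball, I test the first equation of (\ref{0}) with $\vp^{k}u^{p-1}$ for $k$ large. After integrating by parts the cross-diffusive term I expect
\[
  \frac{d}{dt}\io\vp^ku^p+c_p\io\vp^k|\na u^{p/2}|^2\le C_p\io\vp^k u^{p+1}+C_p\io u^p|\na\vp|^2\vp^{k-2}+\dots,
\]
where the term $u^{p}\Del v$ has been split using $u^{p}(v_t+v-u)$ or handled through $|\na v|$ and Young's inequality. The key interpolation is a localized version of Lemma \ref{lem788} with $\psi=u^{p/2}$. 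The $L\log L$ bound makes $\int_{\{u>M\}\cap B}u$ small for $M$ large, uniformly in $t$. This gives $\io\vp^ku^{p+1}\le\eta\io\vp^k|\na u^{p/2}|^2+C(\eta)(\io_{B}u^p+1)$ once the lower-order terms are controlled. Gronwall then yields $\sup_t\int_{B'}u^p<\infty$, first for $p=2$ and then iteratively for every finite $p$ on slightly smaller balls.

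\textbf{Step 3: $L^\infty$.} With $u$ locally bounded in $L^p$ for some $p>2$, uniformly in time, I write the equation for $\vp v$ as a Neumann heat equation with source $\vp u-2\na\vp\cdot\na v-v\Del\vp$. Neumann heat semigroup smoothing, together with Lemma \ref{lem2} and Step 1, gives $\sup_t\|\na v\|_{L^\infty(B'')}<\infty$. Then $\vp u$ solves $w_t=\Del w-\na\cdot(w\na v)+(\text{terms with }\na\vp)$, which have bounded drift and $L^p$ data. A semigroup or Moser-iteration argument then gives $\sup_t\|u\|_{L^\infty(B_r(x_0)\cap\Om)}<\infty$. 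Since $\theta$ is fixed and only finitely many shrinking steps are needed, starting the chain from a radius slightly larger than $r$ covers every $r<r_0$. The compatibility condition (\ref{21.3}) is used whenever $\pO$ meets the ball, so that Lemmas \ref{lem32}, \ref{lem33} and the Neumann semigroup estimates apply.

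\textbf{Main obstacle.} I expect Step 2 to be the hardest, in particular keeping the localized $\Del v$ and $\na v$ contributions under control without a global $L^p$ bound on $u$. My first attempt would be to use the space-time bounds on $\io\vp^4|\Del v|^2$ from Step 1, together with the uniform smallness of $u$ on $\{u>M\}$, for $p=2$. If that fails, I would pass via Lemma \ref{lem33} through an $L^4$ bound on $\na v$ first.
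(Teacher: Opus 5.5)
The paper does not prove this proposition; it quotes it from \cite[Proposition 4.2]{NSS2000}, so your argument has to stand on its own. Your Step 1 is sound, after adjusting the Young constants so that part of the $\int\varphi^4|\Delta v|^2$ dissipation survives. With $\del\ln\frac{T}{T-t}$ replaced by the constant $K$, the proofs of Lemmas \ref{lem3}, \ref{lem8} and \ref{lem9} need no smallness condition. On $B:=B_{\theta^3 r_0}(x_0)\cap\Omega$ they give a bound for $\sup_t\int_B|\nabla v|^2$ and time-integrability on $(t_0,T)$ of $\int_B(u^2+|\Delta v|^2)$. By Lemma \ref{lem33} they also give time-integrability of $h(t):=\int_B|\nabla v|^4$.

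\textbf{Gap in Step 2.} The genuine gap is in Step 2, which is the real content of the proposition. Your displayed inequality, with $C_p\int\varphi^k u^{p+1}$ on the right, is the parabolic--elliptic one. In (\ref{0}) one has $-\Delta v=u-v-v_t$, and the resulting term $-\frac{p-1}{p}\int\varphi^k u^p v_t$ has neither a sign nor a bound. So no such inequality holds without extra information on $v$ (e.g.\ localized maximal regularity, which you do not invoke). Consequently the uniform smallness of $\int_{\{u>M\}\cap B}u$ has nothing to absorb: it only helps against a term $C\int\varphi^ku^{p+1}$ with a \emph{fixed} constant.

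What does close the estimate is your fallback, used differently. Estimate the cross term by Young's inequality and write $W:=\varphi^{k/2}u^{p/2}$. Using $\|W\|_{L^4}^2\le C\|\nabla W\|_{L^2}\|W\|_{L^2}+C\|W\|_{L^2}^2$,
\[
  (p-1)\int_\Omega\varphi^k u^p|\nabla v|^2
  \le (p-1)\|W\|_{L^4(\Omega)}^2\,h^{\frac12}(t)
  \le \varepsilon\|\nabla W\|_{L^2(\Omega)}^2+C_\varepsilon\big(1+h(t)\big)\int_\Omega\varphi^k u^p .
\]
Since $h\in L^1((t_0,T))$, this gives a linear differential inequality for $\int_\Omega\varphi^k u^p$ with an integrable coefficient, and Gronwall applies; the mechanism is not smallness of $u$ on $\{u>M\}$. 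The localization errors, such as $\int\varphi^{k-2}u^p|\nabla\varphi|^2$, must be handled by induction over nested balls. The time-integrability of $\int_{B'}u^p$ on a slightly larger ball comes from Step 1 when $p=2$, and from the previous step's dissipation via Gagliardo--Nirenberg for larger $p$. Your proposal lists this route only as a contingency and never carries it out, so the passage from $L\log L$ to $L^p$ remains unproven.

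\textbf{The covering claim in Step 3.} The last sentence of Step 3 is wrong as stated. Each of Lemmas \ref{lem3}, \ref{lem8}, \ref{lem9}, and each later localization, shrinks the radius by the fixed factor $\theta\le\frac1{16}$. Moreover (\ref{10.1}) is only available inside $B_{r_0}(x_0)$. So any chain centred at $x_0$ ends in $B_{\theta^N r_0}(x_0)$, and for $r$ close to $r_0$ it never reaches $B_r(x_0)$, whatever starting radius below $r_0$ you pick.

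You have to re-centre instead. For each $y\in\overline{B_r(x_0)}\cap\overline{\Omega}$, the local $L\log(L+e)$ bound holds on $B_\rho(y)$ with $\rho:=\min\{r_0-r,\frac{R_0}{2}\}$. Running the chain around $y$ (Lemma \ref{lem21} allows arbitrary centres) yields a bound on $B_{\theta^N\rho}(y)$. By compactness, finitely many such balls cover $\overline{B_r(x_0)}\cap\overline{\Omega}$. This also repairs your reduction to $r_0<R_0$, which as written discards the radii $r\in[R_0,r_0)$.
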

Accordingly, Lemma \ref{lem9} indeed has essentially established our main result:\abs
\proofc of Theorem \ref{theo11}. \quad
  We let $\del=\del(\Om)$ be as in Lemma \ref{lem799}, and pick any $\del_0=\del_0(\Om)\in (0,\del)$.
  Then supposing (\ref{11.1}) to be false for some $x_0\in\B$ and $R>0$, writing $B:=B_R(x_0)\cap\Om$ we can rely on (\ref{mass})
  and use that $-\xi\ln\xi\le\frac{1}{e}$ for all $\xi>0$ in estimating
  \bas
	\int_B u\ln (u+e)
	&=& \int_B u\ln u
	+ \int_{B\cap\{u<1\}} u\ln (u+e)
	+ \int_{B\cap\{u>1\}} u \ln \Big(1+\frac{e}{u}\Big)
	- \int_{B\cap\{u<1\}} u\ln u \\
	&\le& \int_B u\ln u
	+ \ln (1+e) \int_{B\cap\{u<1\}} u + \ln (1+e) \int_{B\cap\{u\ge 1\}} u + \frac{|\Om|}{e} \\
	&\le& \int_B u\ln u
	+ \ln (1+e) \io u_0 + \frac{|\Om|}{e} 
	\qquad \mbox{for all } t\in (0,T),
  \eas
  so that, by hypothesis,
  \bas
	\limsup_{t\nearrow T} \frac{1}{\ln\frac{T}{T-t}} \int_B u\ln (u+e)
	\le \limsup_{t\nearrow T} \frac{1}{\ln\frac{T}{T-t}} \int_B u\ln u
	\le \del_0.
  \eas
  Since $\del_0<\del$, we could thus find $t_0\in [0,T)$ such that
  \bas
	\int_B u\ln (u+e) \le \del\cdot \ln\frac{T}{T-t}
	\qquad \mbox{for all } t\in (t_0,T).
  \eas
  Since thus we would obtain that (H) holds with some $R>0$ which, after diminishing if necessary, can clearly be assumed to be such that
  $R<R_0$ with $R_0$ as provided by Lemma \ref{lem21},
  we could draw on Lemma \ref{lem9} to infer that
  (\ref{10.1}) would be satisfied with $r_0:=\theta^3 R$ and $\theta$ as in Lemma \ref{lem21}.
  This, however, would be incompatible with the outcome of Proposition \ref{prop10}.
\qed
Our claim concerning lower bounds on localized spatial $L^p$ norms, finally, can be obtained from this 
by straightforward interpolation:\abs
\proofc of Corollary \ref{cor12}. \quad
  To see that the claimed conclusion holds with 
  \be{12.01}
	C\equiv C(u_0,v_0):=e^{-\frac{1}{me}} \cdot \min\big\{ T^\frac{\del_0}{m} \, , \, 1\big\},
  \ee
  we fix $x_0\in\B$ and $R>0$, and employ Theorem \ref{theo11} to find $(t_k)_{k\in\N} \subset (0,T)$ 
  such that $t_k\nearrow T$ as $k\to\infty$ and
  \be{12.3}
	\int_{B_R(x_0)\cap\Om} u(\cdot,t_k)\ln u(\cdot,t_k) \ge \del_0\cdot\ln\frac{T}{T-t_k}
	\qquad \mbox{for all } k\in\N.
  \ee
  When $p=\infty$, in view of (\ref{mass}) this directly entails that 
  \bas
	\del_0\cdot\ln\frac{T}{T-t_k}
	\le m\cdot\ln \|u(\cdot,t_k)\|_{L^\infty(B_R(x_0)\cap\Om)}
  \eas
  and hence
  \bas
	(T-t_k)^\frac{\del_0}{m} \|u(\cdot,t_k)\|_{L^\infty(B_R(x_0)\cap\Om)} \ge T^\frac{\del_0}{m}
  \eas
  for all $k\in\N$, so that the choice in (\ref{12.01}) clearly ensures validity of (\ref{12.1}) in this case.\abs
  If $p\in (1,\infty)$, then writing $m_R(t):=\int_{B_R(x_0)\cap\Om} u(\cdot,t)$ for $t\in (0,T)$ and using the Jensen inequality 
  as well as (\ref{mass}) we can estimate
  \bea{12.2}
	\int_{B_R(x_0)\cap\Om} u\ln u
	&=& \frac{m_R(t)}{p-1} \int_{B_R(x_0)\cap\Om} \ln (u^{p-1}) \cdot \frac{u}{m_R(t)} dx \nn\\
	&\le& \frac{m_R(t)}{p-1} \ln \bigg\{ \frac{1}{m_R(t)} \int_{B_R(x_0)\cap\Om} u^p \bigg\} \nn\\
	&=& \frac{m_R(t)}{p-1} \ln \int_{B_R(x_0)\cap\Om} u^p
	- \frac{1}{p-1} m_R(t) \ln m_R(t) \nn\\
	&\le& \frac{m_R(t)}{p-1} \ln \int_{B_R(x_0)\cap\Om} u^p
	+ \frac{1}{(p-1)e}
	\qquad \mbox{for all $t\in (0,T)$},
  \eea
  so that since the right-hand side in (\ref{12.3}) diverges to $+\infty$ as $k\to\infty$ we firstly obtain from (\ref{12.2}) that 
  necessarily
  $\int_{B_R(x_0)\cap\Om} u^p(\cdot,t_k)\to + \infty$ as $k\to\infty$, whence passing to a subsequence if necessary we may assume
  that $\int_{B_R(x_0)\cap\Om} u^p(\cdot,t_k)\ge 1$ for all $k\in\N$.
  We may therefore use that $m_R(t)\le m$ for all $t\in (0,T)$ to infer from (\ref{12.2}) and (\ref{12.3}) that
  \bas
	\del_0 \cdot \ln \frac{T}{T-t_k}
	\le \frac{m}{p-1} \ln \int_{B_R(x_0)\cap\Om} u^p(\cdot,t_k)
	+ \frac{1}{(p-1)e}
	\qquad \mbox{for all } k\in\N,
  \eas
  that is,
  \bas
	\int_{B_R(x_0)\cap\Om} u^p(\cdot,t_k) \ge e^{-\frac{1}{me}} \cdot\Big(\frac{T}{T-t_k}\Big)^\frac{(p-1)\del_0}{m}
	\qquad \mbox{for all } k\in\N
  \eas
  or, equivalently,
  \bas
	(T-t_k)^{\frac{p-1}{p} \cdot \frac{\del_0}{m}} \|u(\cdot,t_k)\|_{L^p(B_R(x_0)\cap\Om)}
	\ge e^{-\frac{1}{pme}} T^{\frac{p-1}{p} \cdot\frac{\del_0}{m}}
	\qquad \mbox{for all } k\in\N.
  \eas
  Since $e^{-\frac{1}{pme}} T^{\frac{p-1}{p} \cdot\frac{\del_0}{m}} \ge C$ by (\ref{12.01}), we thereby obtain (\ref{12.1})
  also in this case.
\qed

\section*{Declarations}
{\bf Funding.} \quad
The authors acknowledge support of the Deutsche Forschungsgemeinschaft (Project No.~462888149).\abs
{\bf Conflict of interest statement.} \quad
The authors declare that they have no conflict of interest.\abs
%and that they have no relevant financial or non-financial interests to disclose.\abs
%
{\bf Data availability statement.} \quad
Data sharing is not applicable to this article as no datasets were
generated or analyzed during the current study.


\begin{thebibliography}{99}
%
\bibitem{agudelo_pistoia}
  \sc Agudelo, O., Pistoia, A.:
  \it Boundary concentration phenomena for the higher-dimensional Keller-Segel system. 
  \rm Calc. Var. Partial Differ. Equ. {\bf 55}, 132 (2016)
\bibitem{xueli_bai}
  \sc Bai, X., Zhou, M.:
  \it Exact blow-up profiles for the parabolic-elliptic Keller-Segel system in dimensions $N\ge 3$.
  \tt arXiv:2406.07201 \rm
\bibitem{biler}
  \sc Biler, P.: \it Local and global solvability of some parabolic systems modelling chemotaxis.
  \rm Adv. Math. Sci. Appl. {\bf 8}, 715-743 (1998)
\bibitem{BHN}
  \sc Biler, P., Hebisch, T., Nadzieja, T.: 
  \it The Debye system: existence and large time behavior of solutions.
  \rm Nonlinear Anal. {\bf 23}, 1189-1209 (1994)
\bibitem{bonheure_JMPA}
  \sc Bonheure, D., Casteras, J.-B., F\"oldes, J.:
  \it Singular radial solutions for the Keller-Segel equation in high dimension.
  \rm J. Math. Pures Appl. {\bf 134}, 204-254 (2020)
\bibitem{delpino_arxiv}
  \sc Buseghin, F., D\'avila, J., Del Pino, M., Musso, M.:
  \it Existence of finite time blow-up in Keller-Segel system.
  \tt arXiv:2312.01475 \rm
\bibitem{cao_small}
  \sc Cao, X.: 
  \it Global bounded solutions of the higher-dimensional Keller-Segel system under smallness conditions in optimal spaces.
  \rm Discr.~Cont.~Dyn.~Syst.~A {\bf 35}, 1891-1904 (2015)
\bibitem{cao_arxiv}
  \sc Cao, X.:
  \it An interpolation inequality and its application in Keller-Segel model.
  \tt arXiv:1707.09235 \rm
\bibitem{collot_JFA2023}
  \sc Collot, C., Ghoul, T.-E., Masmoudi, N., Nguyen, V.~T.:
  \it Collapsing-ring blowup solutions for the Keller-Segel system in three dimensions and higher. 
  \rm J. Funct. Anal. {\bf 285}, 110065 (2023)
%\bibitem{collot_CPAM2022}
%  \sc Collot, C., Ghoul, T.-E., Masmoudi, N., Nguyen, V.-T.:
%  \it Refined description and stability for singular solutions of the 2D Keller-Segel system. 
%  \rm Commun. Pure Appl. Math. {\bf 75}, 1419-1516 (2022)
\bibitem{DDelPDMW}
  \sc D\'avila, J., del Pino, M., Musso, M., Wei, J.:
  \it Existence and stability of infinite time blow-up in the Keller-Segel system.
  \rm Arch. Ration. Mech. Anal. {\bf 248}, 61 (2024)
\bibitem{delpino_JEMS2014}
  \sc Del Pino, M., Mahmoudi, F., Musso, M.:
  \it Bubbling on boundary submanifolds for the Lin-Ni-Takagi problem at higher critical exponents.
  \rm J. Eur. Math. Soc. {\bf 16}, 1687-1748 (2014)
\bibitem{delpino_pistoia_vaira}
  \sc Del Pino, M., Pistoia, A., Vaira, G.:
  \it Large mass boundary condensation patterns in the stationary Keller-Segel system. 
  \rm J. Differential Equations {\bf 261}, 3414-3462 (2016)
\bibitem{ghoul_CPAM2018}
  \sc Ghoul, T.-E., Masmoudi, N.:
  \it Minimal mass blowup solutions for the Patlak-Keller-Segel equation.
  \rm Commun. Pure Appl. Math. {\bf 71}, 1957-2015 (2018)
\bibitem{GT}
  \sc Gilbarg, D., Trudinger, N.S.: 
  \it Elliptic Partial Differential Equations of Second Order.
  \rm Springer-Verlag, Berlin/Heidelberg 2001
\bibitem{glogic}
  \sc Glogi\'c, I., Sch\"orkhuber, B.:
  \it Stable singularity formation for the Keller-Segel system in three dimensions. 
  \rm Arch. Ration. Mech. Anal. {\bf 248}, 4 (2024); correction ibid. {\bf 248}, 57 (2024)
\bibitem{HV}
  \sc Herrero, M.~A., Vel\'azquez, J.~J.~L.: 
  \it A blow-up mechanism for a chemotaxis model.
  \rm Ann.~Scu.~Norm.~Sup.~Pisa Cl.~Sci. {\bf 24}, 633-683 (1997)
\bibitem{horstmann}
  \sc Horstmann, D.: \it 
  From 1970 until present: The Keller-Segel model in chemotaxis and its
  consequences I.
  \rm Jahresberichte DMV {\bf 105} (3), 103-165 (2003)
\bibitem{horstmann_wang}
  \sc Horstmann, D., Wang, G.:
  \it Blow-up in a chemotaxis model without symmetry assumptions.
  \rm Eur.~J.~Appl.~Math. {\bf 12}, 159-177 (2001)
\bibitem{horstmann_win}
  \sc Horstmann, D., Winkler, M.: \it Boundedness vs. blow-up in a chemotaxis system. 
  \rm J.~Differential Equations {\bf 215} (1), 52-107 (2005)
\bibitem{jaeger_luckhaus}
  \sc J\"ager, W., Luckhaus, S.: 
  \it On explosions of solutions to a system of partial differential equations modelling chemotaxis.
  \rm Trans.~Am.~Math.~Soc. {\bf 329}, 819-824 (1992)
\bibitem{KS}
  \sc Keller, E.~F., Segel, L.~A.: \it Initiation of slime mold aggregation 
  viewed as an instability.
  \rm J.~Theor.~Biol. {\bf 26}, 399-415 (1970)
\bibitem{lee_manifold_book} 
  \sc Lee, J. M.:
  \it Introduction to smooth manifolds.
  \rm Springer, New York, 2013  
\bibitem{mizo}
  \sc Mizoguchi, N.:
  \it Refined asymptotic behavior of blowup solutions to a simplified chemotaxis system.
  \rm Commun. Pure Appl. Math. {\bf 75}, 1870-1886 (2022)
\bibitem{mizoguchi_souplet}
  \sc Mizoguchi, N., Souplet, Ph.: \it Nondegeneracy of blow-up points for the parabolic Keller-Segel system. 
  \rm Ann.~Inst.~H.~Poincar\'e Anal.~Non Lin\'eaire {\bf 31}, 851-875 (2014)
\bibitem{nagai2001}
  \sc Nagai, T.: \it Blowup of Nonradial Solutions to Parabolic-Elliptic Systems Modeling
  Chemotaxis in Two-Dimensional Domains.
  \rm J.~Inequal.~Appl. {\bf 6}, 37-55 (2001)
\bibitem{NSS2000}
  \sc Nagai, T., Senba, T., Suzuki, T.: \it Chemotactic collapse in a 
  parabolic system of mathematical biology.
  \rm Hiroshima Math.~J. {\bf 30}, 463-497 (2000)
\bibitem{NSY}
  \sc Nagai, T., Senba, T., Yoshida, K.: \it Application of the Trudinger-Moser inequality to a parabolic system of chemotaxis.
  \rm Funkc.~Ekvacioj, Ser.~Int. {\bf 40}, 411-433 (1997)
\bibitem{nanjundiah}
  \sc Nanjundiah, V.: \it Chemotaxis, signal relaying and aggregation morphology.
  \rm J.~Theor.~Biol. {\bf 42}, 63-105 (1973)
\bibitem{zaag}
  \sc Nguyen, V.T., Nouaili, N., Zaag, H.:
  \it Construction of type I-Log blowup for the Keller-Segel system in dimensions 3 and 4.
  \tt arXiv:2309.13932 \rm
\bibitem{ni_takagi_DMJ1993}
  \sc Ni, W.-M., Takagi, I.:
  \it  Locating the peaks of least-energy solutions to a semilinear Neumann problem. 
  \rm Duke Math. J. {\bf 70}, 247-281 (1993)
\bibitem{osaki_yagi}
  \sc Osaki, K., Yagi, A.: 
  \it Finite dimensional attractor for one-dimensional Keller-Segel equations,
  \rm Funkcialaj Ekvacioj {\bf 44}, 441 - 469 (2001)
\bibitem{schweyer}
  \sc Schweyer, R.: \it Stable blow-up dynamic for the parabolic-parabolic Patlak-Keller-Segel model.
  \rm {\tt arXiv:1403.4975}
\bibitem{senba_NA2007}
  \sc Senba, T.:  \it Type II blowup of solutions to a simplified Keller-Segel system in two dimensional domains.
  \rm Nonlinear Anal. {\bf 66}, 1817-1839 (2007)
\bibitem{senba_suzuki_ADE2001}
  \sc Senba, T., Suzuki, T.: \it Chemotactic collapse in a parabolic-elliptic system of mathematical biology.
  \rm Adv.~Differential Eq. {\bf 6}, 21-50 (2001)
\bibitem{senba_suzuki_ADE2003}
  \sc Senba, T., Suzuki, T.: \it Blowup behavior of solutions to rescaled J\"ager-Luckhaus system.
  \rm Adv.~Differential Eq. {\bf 8}, 787-820 (2003)
\bibitem{souplet_win}
  \sc Souplet, Ph., Winkler, M.:
  \it Blow-up profiles for the parabolic-elliptic Keller-Segel system in dimensions $n\ge 3$.
  \rm Commun.~Math.~Phys. {\bf 367}, 665-681 (2019)
\bibitem{suzuki_book}
  \sc Suzuki, T.: 
  \it Free Energy and Self-interacting Particles.
  \rm Progress in Nonlinear Differential Equations and Their Applications 62, Birkh\"auser, Boston, 2005
\bibitem{win_JMPA}
  \sc Winkler, M: 
  \it Finite-time blow-up in the higher-dimensional parabolic-parabolic Keller-Segel system.
  \rm Journal de Math\'ematiques Pures et Appliqu\'ees {\bf 100}, 748-767 (2013), {\tt arXiv:1112.4156v1}
\bibitem{win_SIMA}
  \sc Winkler, M.: 
  \it Small-mass solutions in the two-dimensional Keller-Segel system coupled to the Navier-Stokes equations.
  \rm SIAM Journal on Mathematical Analysis {\bf 52}, 2041-2080 (2020)
\bibitem{win_NON}
  \sc Winkler, M.:
  \it Single-point blow-up in the Cauchy problem for the higher-dimensional Keller-Segel system.
  \rm Nonlinearity {\bf 33}, 5007-5048 (2020)
\bibitem{win_JEMS}
  \sc Winkler, M.:
  \it Can diffusion degeneracies enhance complexity in chemotactic aggregation? 
  Finite-time blow-up on spheres in a quasilinear Keller-Segel system. 
  \rm J.~Eur.~Math.~Soc., to appear
\bibitem{wloka_pde_book} 
  \sc Wloka, J.:
  \it Partial differential equations.
  \rm Cambridge University Press, Cambridge, 1987 
%
\end{thebibliography}
\end{document}